\theoremstyle{plain}
\renewcommand{\theequation}{\arabic{section}.\arabic{equation}}
\renewcommand\thefigure{\thesection.\@arabic\c@figure}
\renewcommand\thetable{\thesection.\@arabic\c@table}
\newtheorem{thm}{\bf Theorem}
\newtheorem{proposition}{Proposition}[section]
\newenvironment{theorem}{\begin{thm}} {\end{thm}}
\newtheorem{cor}{\bf Corollary}
\newtheorem{lmm}{\bf Lemma}
\newenvironment{lemma}{\begin{lmm}}{\end{lmm}}
\theoremstyle{remark}
\newtheorem{rem}{Remark}[section]
\def \ri {{\rm i}}
\newcommand{\bs}[1]{\boldsymbol{#1}}
\def \er {{\bs e}_r}
\def \et {{\bs e}_\theta}
\def \ep {{\bs e}_\varphi}
\def \vt {{\bs \Phi}_l^m}
\DeclareSymbolFont{ugmL}{OMX}{mdugm}{m}{n}
\DeclareMathAccent{\wideparen}{\mathord}{ugmL}{"F3}
\renewcommand \wedge \times
\begin{document}
\begin{frontmatter}
\title{On time-domain NRBC for Maxwell's equations and its application in accurate  simulation of  electromagnetic invisibility cloaks}

\author[fn1]{Bo Wang}\ead{bowang@hunnu.edu.cn}
\author[fn2]{Zhiguo Yang}\ead{yang1508@purdue.edu}
\author[fn3]{Li-Lian Wang\corref{cor1}}\ead{lilian@ntu.edu.sg}
\author[fn4]{Shidong Jiang}\ead{shidong.jiang@njit.edu}

\cortext[cor1]{Corresponding author}

\address[fn1]{LCSM(MOE), School of Mathematics and Statistics, Hunan Normal University, Changsha, Hunan, 410081, P. R. China.}
\address[fn2]{Department of Mathematics, Purdue University, West Lafayette, Indiana, 47906, USA.}
\address[fn3]{Division of Mathematical Sciences, School of Physical
	and Mathematical Sciences, Nanyang Technological University,
	637371, Singapore.}
	\address[fn4]{Department of Mathematical Sciences, New Jersey Institute of Technology, Newark, New Jersey, 07102, USA.}

\begin{abstract}
In this paper, we present analytic formulas of the temporal convolution kernel functions involved in the time-domain non-reflecting boundary condition (NRBC) for the electromagnetic scattering problems. Such exact formulas themselves lead to accurate and efficient algorithms for computing the NRBC for  domain reduction of the time-domain Maxwell's system in $\mathbb R^3$.
 A second purpose of this paper is to derive a new time-domain model for the electromagnetic invisibility cloak. Different from the existing models,   it contains only one  unknown field and the seemingly complicated  convolutions can be computed as efficiently as the temporal convolutions in the NRBC.
 The governing equation in the cloaking layer is valid for general geometry, e.g.,  a spherical or  polygonal layer.  Here, we aim at simulating the spherical invisibility cloak. We  take the advantage of radially stratified dispersive media and special geometry, and develop an efficient vector spherical harmonic (VSH)-spectral-element method for its accurate  simulation. Compared with limited results on FDTD simulation,
 the proposed method is optimal in both accuracy and computational cost.  Indeed, the saving in computational time is significant.
%
\end{abstract}

\begin{keyword} Maxwell's system,
electromagnetic wave scattering, anisotropic and dispersive medium, non-reflecting boundary condition,  convolution, invisibility cloaking.
\end{keyword}

\end{frontmatter}

\section{Introduction}\label{sect::intro}
Numerical simulation of electromagnetic wave propagations in anisotropic and dispersive medium is of fundamental importance  in many scientific applications and engineering designs. The model problem of interest is the time-dependent  three-dimensional  Maxwell's system:
\begin{subequations}\label{orignsystem}
	\begin{numcases}{}
	{\!\!\!}\partial_t \bs D(\bs r, t)-\nabla\wedge \bs H(\bs r,t)=\bs J(\bs r,t) \quad \;  {\rm in} \;\;\; {\mathbb R}^3,\quad t>0,\\[3pt]
	{\!\!\!}{\partial_t  \bs B}(\bs r,t)+\nabla\wedge \bs E(\bs r,t)=\bs 0 \qquad\quad\;\;\;   {\rm in} \;\;\; {\mathbb R}^3,\quad t>0,
	\end{numcases}
\end{subequations}
with the  constitutive relations
\begin{equation}\label{consteqn}
\bs D=\varepsilon_0\bs{\varepsilon}\bs E,\quad \bs B=\mu_0\bs{\mu}\bs H,
\end{equation}
where $\bs r=(x,y,z)\in {\mathbb R}^3$,  $\bs E, \bs H$ are respectively the electric and magnetic fields,   $\bs D, \bs B$ are  the corresponding electric displacement and magnetic induction fields,  and  ${\bs J}$ is the electric current density.  In
\eqref{consteqn},
$\varepsilon_0, \mu_0$ are the electric permittivity and magnetic permeability in vacuum and $\bs{\varepsilon}, \bs\mu$ are the relative permittivity and permeability tensors of the material. Throughout the paper, we denote
$c=1/{\sqrt{\varepsilon_0\mu_0}}$ and $\eta=\sqrt{ \mu_0/\varepsilon_0}. $
Without loss of generality, we assume that the inhomogeneity or dispersity of the medium is confined in a bounded domain $\Omega$ and ${\bs J}$ is compactly supported. As illustrated in Figure \ref{Figmodeldomain},   both $\Omega$ and ${\rm supp}(f)$ are contained in a  ball $\Omega_b$ of radius $b$.  The Maxwell's system \eqref{orignsystem} is supplemented with the initial conditions:
\begin{equation}\label{inicond}
\bs E(\bs r,0) =\bs E_0(\bs r),\quad \bs H(\bs r,0)=\bs H_0(\bs r) \quad  {\rm
	in} \;\;\;  {\mathbb R}^3,
\end{equation}
where $\bs E_0$ and $\bs H_0$ are also assumed to be compactly supported in the ball $\Omega_b.$  As usual,
we impose the far-field Silver-M\"{u}ller radiation boundary condition on the scattering fields: ${\bs E}^{\rm sc}=\bs E-\bs E^{\rm in}$ and ${\bs H}^{\rm sc}=\bs H-\bs H^{\rm in}$  as follows
\begin{equation}\label{EtHSM}
\partial_t {\bs E}^{\rm sc}_T-\eta \,\partial_t {\bs H}^{\rm sc}\times\hat  {\bs r}=o(|\bs r|^{-1})\quad {\rm as}\;\;    |\bs r|\to \infty,\;\;  t>0, \;\;
\end{equation}
where  $\hat {\bs  r}=\bs r/|\bs r|,$ and $\bs E_T^{\rm sc}:= \hat {\bs r}\times  \bs E^{\rm sc}\times \hat{ \bs r}$
is the tangential component of  $\bs E^{\rm sc}.$ Here,  $\bs E^{\rm in}, \bs H^{\rm in}$ are the incident fields.

Despite its seemly simplicity, the system \eqref{orignsystem}-\eqref{EtHSM} is notoriously difficult to solve numerically. Some of the major numerical issues are (i) unboundedness of the computational domain; 
(ii) the incompressibility implicitly implied by \eqref{orignsystem} (i.e., ${\rm div}(\bs D)={\rm div}(\bs B)=0$); and (iii) the coefficients $\bs \varepsilon$ and $\bs \mu$ might be singular or frequency-dependent (see \eqref{materials0} and \eqref{DrudeE}).  In this paper, we shall address all these three aspects.

In regards to the first  issue, the method of choice  typically includes the perfectly matched layer (PML) technique \cite{Bere94} or  the artificial  boundary condition \cite{Enguist77,Gro.K95,Hagstrom99}. In particular,  the latter is  known as the absorbing boundary condition  (ABC), if it  leads to a well-posed initial-boundary value problem (IBVP)  and the reflection near the boundary is controllable.
Ideally, if the solution of the reduced problem coincides with  that of the original problem,  then the underlying  artificial  boundary condition is called  a transparent (or nonreflecting) boundary condition (TBC) (or NRBC).

In this paper,  we resort to  the NRBC  to reduce the problem \eqref{orignsystem}-\eqref{EtHSM}  to an IBVP inside  a spherical bounded domain $\Omega_b:=\{\bs r:|\bs r|<b\}$:
\begin{subequations}\label{reducedsystem}
	\begin{numcases}{}
	\partial_t \bs D-\nabla\wedge \bs H=\bs J; \quad   {\partial_t  \bs B}+\nabla\wedge \bs E=\bs 0 \quad {\rm in} \;\;\; \Omega_b,\quad t>0,\label{reducedsystemeq}\\
	{\bs E}={\bs E}_0,\quad{\bs H}={\bs H}_0\quad   \text{in} \;\;\; \Omega_b,\quad t=0,\label{initA}\\
	\partial_t{\bs E}_T-\eta\partial_t\bs H\times \hat {\bs r}-\mathscr{T}_b[{\bs E}]=\partial_t{\bs E}_T^{\rm in}-\eta\partial_t\bs H^{\rm in}\times \hat {\bs r}-\mathscr{T}_b[{\bs E}^{\rm in}]:=\bs h \quad {\rm at} \;\;\; r=b,
	\label{reducedsystemnrbc}
	\end{numcases}
\end{subequations}
where the NRBC \eqref{reducedsystemnrbc} involves  the capacity operator  ${\mathscr T}_b$  to be specified in Theorem \ref{theo21}.
It is important to point out  that the NRBC is formulated upon the  scattering fields ${\bs E}^{\rm sc},  {\bs H}^{\rm sc}$,
so  $\bs h$ inevitably contains $\mathscr{T}_b[{\bs E}^{\rm in}].$ As such, it  is rather complicated to implement  and computationally time-consuming due to the involvement of the vector spherical harmonics (VSH) expressions of $\bs E^{\rm in}$ and history dependence in time  induced by the temporal convolution (see \eqref{oldEtM} and Remark \ref{rmk::tbch}).	
To avoid the serious problem of high costs of computing $\bs h$, we instead solve the total fields in the subdomain $\Omega_{b_0}:=\{\bs r: r<b_0\}$ $\subseteq$ $\Omega_b$ (see Figure \ref{Figmodeldomain}), and compute the outgoing scattering fields in a narrow spherical shell $\Omega_b \setminus \Omega_{b_0}.$   More precisely,  we
reformulate \eqref{reducedsystem} as
\begin{subequations}\label{reducedsystem1}
	\begin{numcases}{}
	\partial_t \bs D-\nabla\wedge \bs H=\bs J,\quad  {\partial_t  \bs B}+\nabla\wedge \bs E=\bs 0, \quad  r<b_0,\quad t>0,\label{reducedsystem1eq1}\\
	\partial_t \bs D^{\rm sc}-\nabla\wedge \bs H^{\rm sc}=\bs J,\quad  {\partial_t  \bs B}^{\rm sc}+\nabla\wedge \bs E^{\rm sc}=\bs 0, \quad b_0<r<b,\quad t>0,\label{reducedsystem1eq2}\\
	(\bs E-\bs E^{\rm sc})\times\hat {\bs r}=\bs E^{\rm in}\times \hat{\bs r};\quad (\bs H-\bs H^{\rm sc})\times\hat{\bs r}=\bs H^{\rm in}\times \hat {\bs r},\quad \text{at} \;\;\; r=b_0,  \quad t>0,  \label{transcond1}\\
	\partial_t\bs E_T^{\rm sc}-\eta\partial_t\bs H^{\rm sc}\times \hat {\bs r}-\mathscr{T}_b[{\bs E}^{\rm sc}]=\bs 0,\quad   \text{at} \;\;\; r=b,  \quad t>0,\label{reducedsystem1nrbc}\\
	{\bs E}={\bs E}_0,\;\;{\bs H}={\bs H}_0,\quad 0<r<b_0,\;   {\bs E}^{\rm sc}={\bs E}_0^{\rm sc},\;\;{\bs H}={\bs H}_0^{\rm sc}, \quad   b_0<r<b,\; t=0. \label{intialEH}
	\end{numcases}
\end{subequations}
As a consequence, the NRBC \eqref{reducedsystem1nrbc} depends solely on the scattering fields, which leads to more efficient algorithm. Note that \eqref{transcond1} is obtained from the classic transmission conditions (see,  e.g., \cite[Sec. 1.5]{orfanidis2002electromagnetic} and \cite{Monk03}), that is, the continuity of the tangential components of the total fields $\bs E$ and $\bs H$ at the artificial interface $r=b_0$.

\begin{wrapfigure}{r}{0.41\textwidth}
	\center
	\vspace{-2em}
	\includegraphics[width=0.35\textwidth]{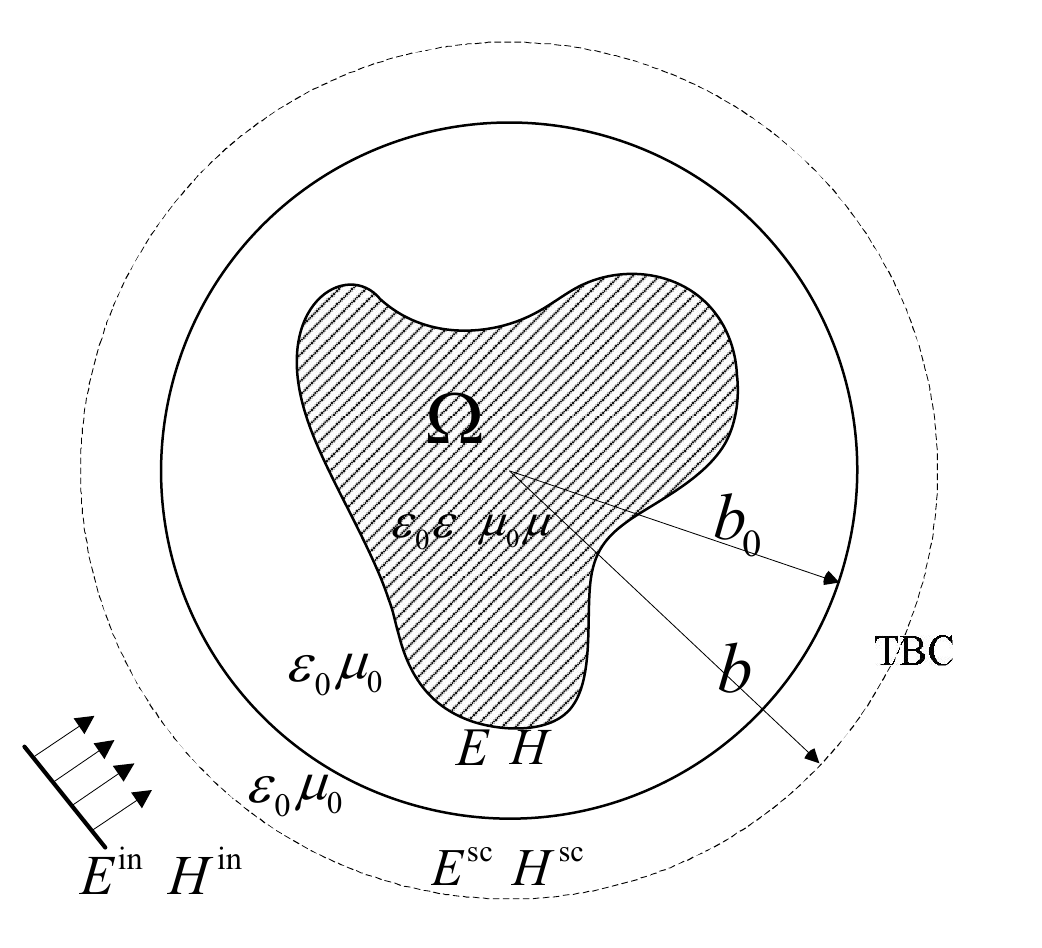}
	\caption{\small An illustration of the geometry}
	\label{Figmodeldomain}
\end{wrapfigure}
One of the main purposes of this paper is devoted to deriving new formulas of the NRBC
by using the compact VSH expansion of the scattering field. For the convolution kernel in the NRBC, an explicit expression in time domain is obtained based on a direct inversion of the Laplace transform (see Theorem \ref{rholthm} below). As shown in \cite{Alpert02,Wang2Zhao12}, the explicit expressions of NRBKs allow for a rapid and accurate evaluation of the convolution in NRBC.

The second main purpose of this paper is to propose an accurate and efficient numerical method for the simulation of the electromagnetic invisibility cloaks by using the new NRBC formula and the compact VSH expansion. Transformation optics originated from the seminal works \cite{pendry.2006,leonhardt2006optical}  offers an effective approach to design  novel and unusual optical devices such as the invisibility cloaks  (see, e.g., \cite{pendry.2006, Greenleaf2009siamreview}), superlens (see, e.g., \cite{yan2008superlens,tsang2008superlens}) and beam splitters (see, e.g., \cite{rahm2008optical}), etc. Numerical simulation plays a crucial role in modelling of the electromagnetic wave interaction with these devices since it serves as a reliable tool to the justification of expensive physical experiments and validation of theoretical predictions. Over the recent years, intensive  simulations and analysis have been devoted to the frequency domain (see, e.g., \cite{cummer2006full,ruan2007ideal, zhai2010finite, zhang2008rainbow, liu2011approximate,kohn2010cloaking, yang_wang_2015, yang2016seamless}). Due to the fact that metamaterials used for manufacturing such kind of devices are unavoidably dispersive (cf. \cite{pendry1996extremely}), i.e., $\bs \varepsilon$ and $\bs \mu$ are frequency-dependent, time-domain mathematical models and simulations of anisotropic and dispersive electromagnetic devices are of fundamental importance. However, only limited works are available for the time-domain simulations including the FDTD \cite{zhao2008full,zhao2009full,okada2012fdtd} and the FETD \cite{li2012time, li2012developing,li2013adaptive, yang2018mathematical, li2019improved}.  Because of the computational complexity, so far the numerical simulation of spherical cloaking structures has only been examined by \cite{zhao2009full} with a parallel implementation of FDTD method. In this paper, we propose a new formulation of the spherical cloak model in the time domain using the Drude dispersion model (cf. \cite{orfanidis2002electromagnetic}). This new formulation allows us to use the symmetry of the problem together with the compact VSH expansions to provide an efficient VSH-spectral-element method for the simulation. Compared with the classic FDTD based algorithm, the VSH-spectral-element method can produce accurate numerical results in much less computational cost.

The rest of the paper is organised as follows. In section \ref{sect::tbc}, we present some new formulas of the NRBC and derive an explicit expression for the underlying convolution kernel. In section \ref{3dcloak}, we first derive a new time-domain model for the spherical dispersive cloaks by using Drude model. Then, an VSH-spectral-element method with Newmark's time integration scheme is proposed for efficient simulation of spherical cloaks. Ample interesting simulations for the spherical dispersive cloaks are presented in section \ref{sect:numer} to show the accuracy and efficiency of the proposed numerical scheme.


%



\section{Computation of  time-domain NRBC }\label{sect::tbc}
\setcounter{equation}{0}
\setcounter{lmm}{0}
\setcounter{thm}{0}

In this section, we  present the  formulations of  the capacity  operator ${\mathscr T}_b$  involved in the time-domain NRBC,  and then  derive  some analytically  perspicuous formulas for the associated temporal convolution kernels (dubbed as NRBKs),
which are crucial for efficient and accurate computation of the NRBC, and in return for its seamless integration with the interior solvers.
\subsection{Formulation of  time-domain NRBC}
 Let $L^2(\Omega)$ be the usual space of square integrable functions on $\Omega,$ and denote  $ \bs L^2(\Omega)=(L^2(\Omega))^3.$ We introduce the spaces
\begin{equation*}\label{Hdiv}
\begin{split}
{\mathbb H}({\rm div};\Omega)=\big\{\bs v  \in {\bs L}^2(\Omega) : {\rm div}\bs  v\in L^2(\Omega) \big\};  \;\;
{\mathbb H}({\bf curl};\Omega)=\big\{\bs v\in (L^2(\Omega))^3: \nabla\times {\bs v}\in  {\bs L}^2(\Omega)  \big\},
\end{split}
\end{equation*}
which are equipped with  the graph norms  as defined  in \cite[p. 52]{Monk03}.  We further define
\begin{equation*}\label{zerosps}
{\mathbb H}_0({\rm div};\Omega)=\big\{\bs v\in {\mathbb H}({\rm div};\Omega) : {\rm div} \bs v=0 \big\}.\;
\end{equation*}

For $0\not=\bs x \in \mathbb R^3,$ let $\er=\bs x/|\bs x|.$ Recall that the VSH
\begin{equation}\label{VSHYpsi}
\big\{ \bs Y_l^m , \bs\Psi_{l}^m,
\bs \Phi_l^m\big\}:=\big\{ Y_l^m \er, \nabla_SY_{l}^m,
\nabla_S Y_l^m \wedge \er\big\}
\end{equation}
 used in the Spherepack \cite{Swa.S00} forms a complete orthogonal basis of ${\bs L}^2(S):=(L^2(S))^3,$ where $\{Y_l^m\}$ are the spherical harmonic basis defined on the unit sphere $S$ as in \cite{Nedelec}. Nevertheless, the following compact form of the VSH expansion of a solenoidal or divergence-free field in \eqref{consept}  can simplify the derivation of NRBC. Moreover, it will lead to more efficient spectral-element algorithm for the 3D spherical cloaking simulation in Section \ref{3dcloak}.
\begin{proposition}\label{EFsinsol}
	For $\bs u\in {\mathbb H}_0({\rm div};\Omega)$, we can  write
	\begin{equation}\label{consept}
	{\bs u}= u_{00}\,\bs Y_0^0 +  \sum_{l=1}^\infty\sum_{|m|=0}^l \Big\{u_{lm}\, \vt
	+ \nabla \times \big(\tilde u_{lm}\,  \vt\big)\Big\},
	\end{equation}
	where $u_{00}$  satisfies
	\begin{equation}\label{eqnu00}
	\Big(\frac{d}{dr}+\frac{2}{r} \Big)u_{00} = 0\;\;\; {\rm or}\;\;\; u_{00}=\frac C {r^2},
	\end{equation}
	for a constant $C$ depends on the average value of the $\bs e_r$ component of $\bs u$ on $S$.
	The expansion \eqref{consept} can be reformulated in terms of the VSH \eqref{VSHYpsi} as follows
	\begin{equation}\label{conseptequiv}
	{\bs u}= u_{00}\,\bs Y_0^0 +  \sum_{l=1}^\infty\sum_{|m|=0}^l \Big\{\frac {\beta_l} r \tilde u_{lm} \,  \bs Y_l^m
	+\hat \partial_r \tilde u_{lm} \,\bs\Psi_l^m +u_{lm}\, \vt \Big\},
	\end{equation}
	where $\beta_l:=l(l+1)$ and 
	\begin{equation}\label{uvwexp00}
	\begin{split}
	&    u_{lm}(r)= {\beta_l^{-1}}\big \langle \bs u, \vt  \big\rangle_S,
	\quad   r^{-1} \tilde u_{lm}(r)= \beta_l^{-1}\big \langle \bs u, \bs Y_l^m \big \rangle_S.
	\end{split}
	\end{equation}
\end{proposition}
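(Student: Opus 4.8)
The plan is to begin from the most general vector spherical harmonic expansion on each sphere $|\bs r|=r$ and then read off the structural consequences of the constraint ${\rm div}\,\bs u=0$. Since $\{\bs Y_l^m,\bs\Psi_l^m,\bs\Phi_l^m\}$ is a complete orthogonal basis of $\bs L^2(S)$, for each fixed $r$ I would write
\begin{equation*}
\bs u=\sum_{l=0}^{\infty}\sum_{|m|=0}^{l}\big\{a_{lm}(r)\,\bs Y_l^m+b_{lm}(r)\,\bs\Psi_l^m+c_{lm}(r)\,\bs\Phi_l^m\big\},
\end{equation*}
with radial coefficient functions $a_{lm},b_{lm},c_{lm}$. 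The goal is then to show that the solenoidal condition rigidly links $a_{lm}$ and $b_{lm}$ while leaving $c_{lm}$ free, and that the resulting $(\bs Y_l^m,\bs\Psi_l^m)$ part is exactly a curl of a single $\bs\Phi_l^m$-mode.

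First I would compute the divergence termwise. Using ${\rm div}\,\bs u=\frac{1}{r^2}\partial_r(r^2u_r)+\frac1r\,{\rm div}_S\bs u_T$ together with the surface identities ${\rm div}_S\bs\Psi_l^m=\Delta_S Y_l^m=-\beta_l Y_l^m$ and ${\rm div}_S\bs\Phi_l^m=0$, the equation ${\rm div}\,\bs u=0$ decouples, mode by mode, into
\begin{equation*}
\frac{1}{r^2}\frac{d}{dr}\big(r^2a_{lm}\big)=\frac{\beta_l}{r}\,b_{lm},\qquad l\ge1 .
\end{equation*}
For $l=0$ one has $\beta_0=0$ and $\bs\Psi_0^0=\bs\Phi_0^0=\bs0$ (because $Y_0^0$ is constant), so the only surviving contribution is $a_{00}(r)\bs Y_0^0$ and the constraint reduces to $\frac{d}{dr}(r^2a_{00})=0$, i.e. $a_{00}=C/r^2$, which is exactly \eqref{eqnu00}; the constant $C$ is pinned down by $a_{00}=\langle\bs u,\bs Y_0^0\rangle_S$, the (normalized) mean of the $\bs e_r$-component of $\bs u$ over $S$. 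This accounts for the $u_{00}$ term.

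For $l\ge1$ the crux is the curl identity for a toroidal harmonic. Writing $\bs\Phi_l^m=\nabla\times(Y_l^m\bs r)$ and using $\nabla\times\nabla\times=\nabla\,{\rm div}-\Delta$ (so that $\nabla\times\bs\Phi_l^m=\frac{\beta_l}{r}\bs Y_l^m+\frac1r\bs\Psi_l^m$), I would establish
\begin{equation*}
\nabla\times\big(\tilde u_{lm}(r)\,\bs\Phi_l^m\big)=\frac{\beta_l}{r}\,\tilde u_{lm}\,\bs Y_l^m+\Big(\frac{d}{dr}+\frac1r\Big)\tilde u_{lm}\,\bs\Psi_l^m,
\end{equation*}
which also identifies the operator $\hat\partial_r=\frac1r\frac{d}{dr}(r\,\cdot)$ appearing in \eqref{conseptequiv}. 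Matching the $\bs Y_l^m$-coefficient forces $\tilde u_{lm}:=r\,a_{lm}/\beta_l$; the decisive point is that the $\bs\Psi_l^m$-coefficient then matches automatically, since the divergence-free relation gives
\begin{equation*}
\hat\partial_r\tilde u_{lm}=\frac{1}{\beta_l r}\frac{d}{dr}\big(r^2a_{lm}\big)=b_{lm}.
\end{equation*}
Setting $u_{lm}:=c_{lm}$ for the toroidal part then produces \eqref{consept}, and substituting the curl formula yields the equivalent VSH form \eqref{conseptequiv}. I expect this curl computation, and the bookkeeping of the surface-differential identities it rests upon, to be the main obstacle; the remainder is orthogonality.

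Finally, the coefficient formulas \eqref{uvwexp00} follow by testing against the basis on $S$ and invoking orthogonality with the normalizations $\langle\bs Y_l^m,\bs Y_l^m\rangle_S=1$ and $\langle\bs\Phi_l^m,\bs\Phi_l^m\rangle_S=\|\nabla_S Y_l^m\|_S^2=\beta_l$: projecting $\bs u$ onto $\bs\Phi_l^m$ gives $u_{lm}=c_{lm}=\beta_l^{-1}\langle\bs u,\bs\Phi_l^m\rangle_S$, while projecting onto $\bs Y_l^m$ gives $a_{lm}=\langle\bs u,\bs Y_l^m\rangle_S$, whence $r^{-1}\tilde u_{lm}=\beta_l^{-1}a_{lm}=\beta_l^{-1}\langle\bs u,\bs Y_l^m\rangle_S$.
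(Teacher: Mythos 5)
Your argument is correct, and it runs in the opposite logical direction from the paper's proof. The paper verifies the \emph{converse}: it takes the ansatz \eqref{consept}, applies ${\rm div}$ termwise using \eqref{divformula} and \eqref{newvect} to check that the field is solenoidal precisely when \eqref{eqnu00} holds, then obtains \eqref{conseptequiv} from the curl identity \eqref{vt2yy2} and reads off \eqref{uvwexp00} by orthogonality; the representability of an arbitrary $\bs u\in{\mathbb H}_0({\rm div};\Omega)$ is left implicit in the completeness of the basis \eqref{VSHYpsi}. You instead start from the general expansion $\bs u=\sum\{a_{lm}\bs Y_l^m+b_{lm}\bs\Psi_l^m+c_{lm}\bs\Phi_l^m\}$, derive the mode-by-mode constraint $\frac{1}{r^2}\frac{d}{dr}(r^2a_{lm})=\frac{\beta_l}{r}b_{lm}$ from ${\rm div}\,\bs u=0$, and then observe that setting $\tilde u_{lm}=ra_{lm}/\beta_l$ makes the $\bs\Psi_l^m$-coefficient of $\nabla\times(\tilde u_{lm}\bs\Phi_l^m)$ match $b_{lm}$ \emph{automatically} — which is exactly the point needed to show that the poloidal part is a curl of a single toroidal mode. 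This buys a genuine existence proof of the representation (which is what the proposition literally asserts, ``we can write''), at the cost of a slightly longer computation; the paper's route is shorter but only establishes sufficiency of the ansatz. All of your intermediate identities ($\hat\partial_r=\frac1r\frac{d}{dr}(r\,\cdot)$, ${\rm div}_S\bs\Psi_l^m=-\beta_lY_l^m$, $\nabla\times\bs\Phi_l^m=\frac{\beta_l}{r}\bs Y_l^m+\frac1r\bs\Psi_l^m$, the normalizations $\langle\bs Y_l^m,\bs Y_l^m\rangle_S=1$ and $\langle\bs\Phi_l^m,\bs\Phi_l^m\rangle_S=\beta_l$) agree with \eqref{opsa}--\eqref{newvect}, and your $l=0$ case and coefficient formulas reproduce \eqref{eqnu00} and \eqref{uvwexp00} exactly.
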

\begin{proof} We first show that if \eqref{eqnu00} holds, then the expansion \eqref{consept} automatically satisfies
	${\rm div}\, \bs u =0.$  Note that   ${\rm div}(u_{lm}\, \vt)=0$ (cf. \eqref{divformula}). Performing  the divergence operator on \eqref{consept}, and using \eqref{newvect}, we have ${\rm div}\, \bs u=0,$ if $u_{00}$ satisfies the equation in  \eqref{eqnu00} which has explicit solution:  $u_{00}=C/r^2$.
	Thanks to \eqref{vt2yy2}, the expansion \eqref{conseptequiv} follows immediately from \eqref{consept}. Then \eqref{uvwexp00} is a direct consequence of the orthogonality of VSH.
\end{proof}

\begin{rem}\label{y00} For any constant $C,$ the field $ C\, \er/r^2, r>0$ (note: $Y_0^0= 1/{(2\sqrt \pi)}$) is solenoidal. Given a  vector field,  the VSH expansion coefficients in  \eqref{uvwexp00}    can be evaluated accurately and efficiently  by using  discrete VSH-transforms in SpherePack	\cite{Swa.S00}. 	\qed
\end{rem}

The time-domain NRBC to be formulated below  involves  the modified spherical Bessel function {\rm(}cf. \cite{watson}{\rm)} defined by
	\begin{equation}\label{akasympc}
	k_{l}(z)=\sqrt{\frac \pi {2 z}} K_{l+1/2}(z),
	\end{equation}
	with  $K_{l+1/2}(\cdot)$ being the
	modified Bessel function of the second kind of order $l+1/2$, together with  the temporal  convolution and
 inverse Laplace transform:
	$$ (f\ast g)(t)=\int_0^t f(\tau)g(t- \tau)d\tau,\quad {\mathscr L}^{-1}[F](t)= \frac 1 {2\pi \ri}\int_{\gamma-\infty \ri}^{\gamma+\infty\ri}F(s) e^{s t} ds,$$
	where $F(s)$ is the Laplace transform of $f(t),$ and  the integration is done along the vertical line $\Re (s) = \gamma$ in the complex plane such that $\gamma$ is greater than the real part of all singularities of $F(s)$.

The formulation of the capacity  operator can be found in e.g.,  \cite{Col.K98,Nedelec,Monk03}, but the notation and normalisation are very different.  Here, we feel compelled to sketch its derivation.
\begin{theorem}\label{theo21}
	The time-domain capacity operator $\mathscr T_b$ takes the form
	\begin{equation}\label{oldEtM}
	{\mathscr T}_b[\bs E^{\rm sc}]:=\frac c b \sum_{l=1}^\infty\sum_{m=-l}^l\Big\{
	\big(\rho_l\ast {\psi}^{(1)}_{lm}\big)\,{\bs\Psi}_l^m+\big(\sigma_l\ast {\psi}^{(2)}_{lm}\big)\,\vt\Big\},
	\end{equation}
	where the convolution kernels are given by the inverse Laplace transforms:
	\begin{equation}\label{arhol}
	\begin{split}
	\rho_l(t)&={\mathscr L}^{-1}\bigg[z\,\bigg(\frac{z\, k_l(z)}{k_l(z)+z\, k_l'(z)}+1\bigg) \bigg](t), \quad \sigma_l(t) ={\mathscr L}^{-1}\bigg[1+z+z\frac{k_l'(z)}{k_l(z)}\bigg](t),
	\end{split}
	\end{equation}
with	$z=\frac{sb}{c}$, and where $\{{\psi}^{r}_{lm}, {\psi}^{(1)}_{lm}, {\psi}^{(2)}_{lm}\}$ are   the VSH expansion coefficients of $\bs E^{\rm sc}$ at the spherical surface $r=b$, i.e.,
	\begin{equation}\label{vshexpscatfield}
	\bs E^{\rm sc}=\psi_{00}\,\bs Y_0^0 + \sum_{l=1}^\infty\sum_{|m|=0}^l\Big\{{\psi}^{r}_{lm}\,  \bs Y_l^m\, +{\psi}_{lm}^{(1)}\bs \Psi_{l}^m+{\psi}_{lm}^{(2)}\, \vt\Big\}.
	\end{equation}
\end{theorem}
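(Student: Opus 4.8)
The plan is to pass to the Laplace transform in $t$, reduce the exterior problem to a frequency-domain vacuum Maxwell system, solve it mode by mode in the VSH basis by retaining only the outgoing radial solutions, read off the resulting Dirichlet-to-Neumann relation, and finally invert the transform to recover the temporal convolutions in \eqref{oldEtM}.

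First I would Laplace-transform in time the source-free Maxwell system obeyed by the scattered fields in the exterior region $r>b$, where $\bs\varepsilon=\bs\mu=\bs I$ and the scattered data vanish initially. Writing $\hat{\bs E},\hat{\bs H}$ for the transforms, Faraday's law gives $\hat{\bs H}=-(\mu_0 s)^{-1}\nabla\times\hat{\bs E}$, and eliminating $\hat{\bs H}$ leaves $\nabla\times\nabla\times\hat{\bs E}+(s/c)^2\hat{\bs E}=\bs 0$ with ${\rm div}\,\hat{\bs E}=0$. With $z=sr/c$ the radial part of each multipole satisfies the modified spherical Bessel equation; the solution decaying as $r\to\infty$, which is the one selected by the Silver--M\"{u}ller condition \eqref{EtHSM}, is $k_l(z)$ from \eqref{akasympc}.

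Next I would build $\hat{\bs E}^{\rm sc}$ from the two families of vector multipoles tied to the VSH of \eqref{VSHYpsi}: the transverse-electric fields $\bs M_{lm}=\nabla\times(\bs r\,k_l(sr/c)Y_l^m)$, which are purely tangential and proportional to $\vt$, and the transverse-magnetic fields $\bs N_{lm}=(c/s)\nabla\times\bs M_{lm}$, which couple the radial $\bs Y_l^m$ part to a tangential $\bs\Psi_l^m$ part. Since $\mathscr{T}_b[\bs E^{\rm sc}]=\partial_t\bs E^{\rm sc}_T-\eta\,\partial_t\bs H^{\rm sc}\times\hat{\bs r}$ for outgoing fields (cf. \eqref{reducedsystem1nrbc}), its transform equals $s\hat{\bs E}^{\rm sc}_T-\eta s\,\hat{\bs H}^{\rm sc}\times\hat{\bs r}$, so it suffices to evaluate, for each family, the tangential trace of $\hat{\bs E}^{\rm sc}$ and of $\hat{\bs H}^{\rm sc}\times\hat{\bs r}$ at $r=b$ and compare them against the boundary coefficients $\psi^{(1)}_{lm},\psi^{(2)}_{lm}$ of \eqref{vshexpscatfield}; the radial $\bs Y_l^m$ content is automatically removed by the tangential projection. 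The TE trace carries the plain factor $k_l(z)$ in $\hat{\bs E}_T$ but the Riccati--Bessel factor $k_l(z)+z\,k_l'(z)=\frac{d}{dz}\big(z\,k_l(z)\big)$ in $\hat{\bs H}\times\hat{\bs r}$ (one curl turns $\bs M_{lm}$ into $\bs N_{lm}$), whence the multiplier $1+z+z\,k_l'(z)/k_l(z)$, i.e. $\sigma_l$, on the $\vt$ component; the TM trace has these two factors interchanged, so normalising by the $k_l(z)+z\,k_l'(z)$ appearing in $\hat{\bs E}_T$ produces the reciprocal combination $z\,k_l(z)/(k_l(z)+z\,k_l'(z))$, hence the multiplier $z\big(z\,k_l(z)/(k_l(z)+z\,k_l'(z))+1\big)$, i.e. $\rho_l$, on the $\bs\Psi_l^m$ component. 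In both cases the trailing ``$+1$'' and ``$+z$'' originate from the self-term $s\hat{\bs E}^{\rm sc}_T$, and the overall $c/b$ prefactor comes from $d/dr=(c/b)\,d/dz$ at $r=b$. Inverting $\mathscr{L}$ and applying the convolution theorem converts each frequency multiplier into the temporal convolutions $\rho_l\ast\psi^{(1)}_{lm}$ and $\sigma_l\ast\psi^{(2)}_{lm}$, giving \eqref{oldEtM}--\eqref{arhol}.

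The main obstacle will be the transverse-magnetic computation: forming $\bs N_{lm}=(c/s)\nabla\times\bs M_{lm}$ and its curl, and tracking how the radial factor $z\,k_l(z)$ and its derivative recombine into $k_l(z)+z\,k_l'(z)$ so that normalising yields exactly the stated multiplier on $\psi^{(1)}_{lm}$. Keeping the TE/TM bookkeeping and the constants $\varepsilon_0,\mu_0,\eta,c$ mutually consistent, so that the prefactors of the $\hat{\bs H}\times\hat{\bs r}$ terms collapse to $1$, is where the care is needed; the Laplace inversion itself is formal here and made explicit in Theorem \ref{rholthm}.
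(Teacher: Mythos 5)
Your proposal is correct and follows essentially the same route as the paper: Laplace transform in time, solution of the exterior vacuum Maxwell system mode by mode with the outgoing radial factor $k_l(sr/c)$, derivation of the electric-to-magnetic map at $r=b$, and identification of the symbols $z\big(\tfrac{zk_l}{k_l+zk_l'}+1\big)$ and $1+z+z\tfrac{k_l'}{k_l}$ before inverting the transform. Your TE/TM multipoles $\bs M_{lm}=k_l(sr/c)\,\bs\Phi_l^m$ and $\bs N_{lm}\propto\nabla\times(k_l\,\bs\Phi_l^m)$ are exactly the compact divergence-free VSH expansion of Proposition \ref{EFsinsol} that the paper uses, and your accounting of where the self-term $s\breve{\bs E}_T^{\rm sc}$ and the $c/b$ prefactor enter matches the paper's computation in \eqref{sdomainbc1}--\eqref{sdomaintotdomain}.
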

\begin{proof}
	Consider  the Maxwell's equations exterior to the artificial ball $B:=\{\bs r: |\bs r|<b\}:$
	\begin{subequations}\label{exteriorsystem}
		\begin{numcases}{}
		\varepsilon_0 \partial_t \bs E^{\rm e}-\nabla\wedge \bs H^{\rm e}=\bs 0,\quad
		\mu_0 {\partial_t  \bs H}^{\rm e}+\nabla\wedge \bs E^{\rm e}=\bs 0,\;\;  {\rm in}\;\;
		{\mathbb R}^3\setminus \bar B,\;\; t>0,\label{Heqnnew}\\
		{\bs E}^{\rm e}\wedge \hat {\bs r} =\bs \lambda,  \quad r=b,\;\;\; t>0\label{Estime}\\
		{\partial_t \bs E_T^{\rm e}}- \eta \,
		{\partial_t \bs H^{\rm e}}\times  \hat{\bs r}
		=o(r^{-1}),\quad  |\bs r|\to \infty, \;\; t>0, \label{bcondnew}\\
		\bs E^{\rm e} =\bs H^{\rm e}=\bs 0, \quad  {\rm
			in}\;\; {\mathbb R}^3\setminus \bar B,\;\;\; t=0, \label{inicondnew}
		\end{numcases}
	\end{subequations}
	where $\bs \lambda$ is a given field. 
	 It is known that this system can be solved analytically by using Laplace transform in time and separation of variables in space. For this purpose,  we denote by  $\breve{\bs E}^{\rm e}, \breve{\bs H}^{\rm e}$ and $\breve{\bs \lambda}$ the Laplace transforms of $\bs {E}^{\rm e}, \bs {H}^{\rm e}$ and ${\bs \lambda}$ with respect to $t$, respectively.  As  $\breve{\bs E}^{\rm e}$ is a divergence-free vector field, we have from Proposition \ref{EFsinsol} that
	\begin{equation}
	\label{divfreevshexp}
	\breve{\bs E}^{\rm e}=\breve u_{00}\bs Y_0^0+\sum_{l=1}^\infty\sum_{|m|=1}^l\big\{\breve{u}_{lm} \bs\Phi_l^m+\nabla\times\big(\breve{v}_{lm} \bs\Phi_l^m\big)\big\}.
	\end{equation}
	 According to \cite[Chap. \!\!5]{Nedelec},  the Laplace transformed system of  \eqref{exteriorsystem}   in $s$-domain
	 \begin{equation}\label{tranformms}
	 \varepsilon_0 s 	\breve{\bs E}^{\rm e} -\nabla\times 	\breve{\bs H}^{\rm e}=\bs 0,\quad  \mu_0 s \breve{\bs H}^{\rm e} +\nabla\times 	\breve{\bs E}^{\rm e}=\bs 0;\quad \breve {\bs E}^{\rm e}\wedge \hat {\bs r} =\breve {\bs \lambda} \;\; {\rm at}\;\; r=b,
	 \end{equation}
	 has the  exact  solution \eqref{divfreevshexp} with $\breve u_{00}=0$ and
	\begin{equation}\label{solutionLap}
	\breve{u}_{lm}(r)=-\frac{k_l(sr/c)}{k_l(sb/c)} \breve{\lambda}_{lm}^{(1)},\quad \breve{v}_{lm}=\frac{k_l(sr/c)}{\hat{\partial}_rk_l(sb/c)}\breve{\lambda}_{lm}^{(2)},\quad l\ge 1,
	\end{equation}
	where $\hat \partial_r=\frac d {dr}+\frac 1 r$, $\{\breve{\lambda}_{lm}^{(1)}, \breve{\lambda}_{lm}^{(2)}\}$ are the VSH expansion coefficients of $\breve{\bs{\lambda}}$ (involving only the tangential components).
Moreover,  we can derive the electric-to-magnetic Calderon (EtMC)  operator that maps the data $\breve{\bs  \lambda}$   to $\breve{\bs H}^{\rm e}\times \hat {\bs r}$ (cf. \cite{Col.K98,Monk03}) as follows
	\begin{equation}\label{sdomainbc1}
	\breve{\bs H}^{\rm e} \wedge \hat {\bs r}
	=-\frac{1}{s\mu_0}\sum_{l=1}^\infty\sum_{m=-l}^l\bigg\{\frac{ s^2k_l(sr/c)}{c^2\hat \partial_r k_l(sb/c)}\breve{\lambda}_{lm}^{(2)}\bs\Psi_l^m
	-\frac{\hat{\partial}_rk_l(sr/c)}{k_l(sb/c)}\breve{\lambda}_{lm}^{(1)}\bs\Phi_l^m\bigg\},
	\end{equation}
	which can be derived from the second equation in \eqref{divfreevshexp} and the properties of VSH in Appendix \ref{spherehar}.
By requiring the scattering  fields  $\{\breve{\bs E}^{\rm sc}, \breve{\bs H}^{\rm sc}\}$  to be identical to the exterior fields $\{\breve{\bs E}^{\rm e}, \breve{\bs H}^{\rm e}\}$ across  the
	artificial boundary $r=b,$ and setting $\breve{\bs
	E}^{sc}\times \hat{\bs r}|_{r=b}=\breve{ \bs \lambda},$ we obtain
	\begin{equation}\label{sdomaintotdomain}
	\begin{split}
	\mu_0 s\breve{\bs H} ^{\rm sc}\wedge \hat {\bs r}\big|_{r=b}&=-\sum_{l=1}^\infty\sum_{m=-l}^l\Big\{\frac{s^2 k_l(sb/c)}{c^2\hat{\partial}_rk_l(sb/c)}\breve{\psi}^{(1)}_{lm}\bs\Psi_l^m+\frac{\hat{\partial}_rk_l(sb/c)}{k_l(sb/c)}\breve{\psi}^{(2)}_{lm}\bs\Phi_l^m\Big\}
	\\&=-\sum_{l=1}^\infty\sum_{m=-l}^l\Big\{\Big(\frac{1}{b}\mathscr{L}[\rho_l]-\frac{s}{c}\Big)\breve{\psi}^{(1)}_{lm}\bs\Psi_l^m+\Big(\frac{1}{b}\mathscr{L}[\sigma_l]-\frac{s}{c}\Big)\breve{\psi}^{(2)}_{lm}\bs\Phi_l^m\Big\}\\
	&=-\frac{1}{b}\sum_{l=1}^\infty\sum_{m=-l}^l\Big\{\mathscr{L}[\rho_l]\breve{\psi}^{(1)}_{lm}\bs\Psi_l^m
	+\mathscr{L}[\sigma_l]\breve{\psi}^{(2)}_{lm}\bs\Phi_l^m\Big\}+\frac{s}{c}\breve{\bs E}_{T}^{\rm sc},
	\end{split}
	\end{equation}
	where $\sigma_l(t)$ and $\rho_l(t)$ are defined in \eqref{arhol}.
	Transforming the relation  \eqref{sdomaintotdomain} back to  $t$-domain leads to the time-domain NRBC
	\begin{equation}\label{nrbconscatteringfield}
	\partial_t{\bs E}^{\rm sc}_T-\eta\partial_t\bs H^{\rm sc}\times \hat {\bs r}-\mathscr{T}_b[{\bs E}^{\rm sc}]=\bs 0 \quad {\rm at}\;\;\; r=b,
	\end{equation}
	and the capacity operator  ${\mathscr T}_b[\bs E^{\rm sc}]$ given by \eqref{oldEtM}.
\end{proof}

\begin{rem}\label{rmk::tbch}  Note that the exact NRBC in  \cite{Hagstrom07} expressed as a system of  $\bs E^{\rm sc}$ and $\bs H^{\rm sc}$, is actually equivalent to the formulation \eqref{reducedsystem1nrbc} by using the VSH  $\big\{ \bs Y_l^m , \bs\Psi_{l}^m, \bs \Phi_l^m\big\}$.
	From the NRBC \eqref{reducedsystem1nrbc} on scattering fields, the NRBC \eqref{reducedsystemnrbc} on total field $\{ \bs E, \bs H\}$ can be derived straightforwardly as follows
	\begin{equation}\label{nrbcontotalfield}
	\partial_t{\bs E}_T-\eta\partial_t\bs H\times \hat {\bs r}-\mathscr{T}_b[{\bs E}]=\partial_t{\bs E}_T^{\rm in}-\eta\partial_t\bs H^{\rm in}\times \hat {\bs r}-\mathscr{T}_b[{\bs E}^{\rm in}]:=\bs h, \quad {\rm at}\;\;\; r=b.
	\end{equation}
	We shall see  from \eqref{oldEtM}  and \eqref{nrbcontotalfield} that the source term $\bs h$ needs to be precomputed from $\{ \bs E^{\rm in}, \bs H^{\rm in} \}$ if the model \eqref{reducedsystem} is used for solving the total field $\{ \bs E, \bs H\}$ in the whole computational domain $B$. However, the involved term $\mathscr{T}_b[{\bs E}^{\rm in}]$ is computationally costly due to: (i) The VSH expansion coefficients of the incident wave are necessary for the computation; (ii) in the time direction, two convolutions need to be calculated numerically for each group of VSH expansion coefficients; (iii) the numerical scheme for the convolution needs to be more accurate than the time discretization scheme for the model problem due to sensitivity of the operator $\mathscr{T}_b$ on the error. Thus, a great number of time consuming VSH expansion need to be performed.	\qed
\end{rem}

It is seen   from \eqref{oldEtM} that 
to compute ${\mathscr T}_b[\bs E^{\rm sc}]$ at  $t>0,$ it requires
\begin{itemize}
\item[(i)] accurate  evaluation of the convolution kernel functions:  $\rho_l(t)$ and $\sigma_l(t)$ defined in  \eqref{arhol};
\item[(ii)] fast computation of the temporal convolutions:  $\rho_l\ast {\psi}^{(1)}_{lm}$ and $\sigma_l\ast {\psi}^{(2)}_{lm}.$
\end{itemize}
With these, we  can compute ${\mathscr T}_b[\bs E^{\rm sc}]$ on the sphere $r=b$  by using the VSH transform in  the Spherepack \cite{Swa.S00}.


\medskip

We now deal with the first issue. Interestingly,
the kernel function  $\sigma_l(t)$ coincides with  the NRBC kernel of  the transient wave equation, which admits the following explicit formula  
 (cf. \cite{Sofronov1998,Hagstrom07,Wang2Zhao12}).
\begin{proposition}\label{sigmalthm} Let $\sigma_{l}(t)$ be the kernel function defined in \eqref{arhol}.
	Then we have
	\begin{equation}\label{kernela}
	\sigma_{l}(t)=\frac{c} b \sum_{j=1}^{l}z_j^l e^{\frac{c}{b} z_j^lt},\quad l\ge 1,\;\; t\ge 0,
	\end{equation}
		where $\{z_j^l\}_{j=1}^{l}$ are the zeros of $K_{l+1/2}(z)$ with $l\ge 1$.
\end{proposition}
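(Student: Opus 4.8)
The plan is to exploit the fact that, for integer order, the modified spherical Bessel function $k_l$ is \emph{elementary}: it is $e^{-z}$ times a rational function. Concretely, one has the closed form
\begin{equation*}
k_l(z)=\frac{\pi}{2}\,\frac{e^{-z}}{z^{\,l+1}}\,\theta_l(z),\qquad \theta_l(z)=\sum_{k=0}^{l}\frac{(l+k)!}{k!\,(l-k)!\,2^{k}}\,z^{\,l-k},
\end{equation*}
where $\theta_l$ is the reverse Bessel polynomial of degree $l$. Since $\sqrt{\pi/(2z)}\neq 0$ for $z\neq 0$, the finite zeros of $K_{l+1/2}(z)$ coincide with those of $k_l(z)$, namely the $l$ roots $\{z_j^l\}_{j=1}^{l}$ of $\theta_l$. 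This closed form is the structural input that makes the inversion explicit.

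First I would rewrite the symbol $F(s):=1+z+z\,k_l'(z)/k_l(z)$, with $z=sb/c$, via a logarithmic derivative of $k_l$. From the factorization above,
\begin{equation*}
\frac{k_l'(z)}{k_l(z)}=-1-\frac{l+1}{z}+\frac{\theta_l'(z)}{\theta_l(z)},
\end{equation*}
so the $e^{-z}$ and $z^{-(l+1)}$ factors contribute the pieces $-1$ and $-(l+1)/z$. Multiplying by $z$ and inserting into $F$, the terms $z$ and $-z$ cancel and $1-(l+1)$ combine, leaving the purely rational expression
\begin{equation*}
F(s)=z\,\frac{\theta_l'(z)}{\theta_l(z)}-l .
\end{equation*}
A short check shows $F$ is a \emph{proper} rational function of $z$, hence of $s$: it is regular at $z=0$, where the $-(l+1)/z$ singularity in $k_l'(z)/k_l(z)$ is tamed by the prefactor $z$ so that $F(0)=-l$, and $F(s)\to 0$ as $s\to\infty$ because $\deg\theta_l'=\deg\theta_l-1$. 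Thus $\mathscr L^{-1}[F]$ is a genuine function with no $\delta$-type contributions.

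Next I would expand in partial fractions. Writing $\theta_l'/\theta_l=\sum_{j=1}^{l}(z-z_j^l)^{-1}$ and using $z/(z-z_j^l)=1+z_j^l/(z-z_j^l)$ gives
\begin{equation*}
F(s)=\sum_{j=1}^{l}\frac{z_j^l}{\,z-z_j^l\,}=\frac{c}{b}\sum_{j=1}^{l}\frac{z_j^l}{\,s-\tfrac{c}{b}\,z_j^l\,},
\end{equation*}
after the substitution $z=sb/c$. Inverting term by term with $\mathscr L^{-1}[(s-a)^{-1}]=e^{at}$ produces exactly $\sigma_l(t)=\frac{c}{b}\sum_{j=1}^{l}z_j^l\,e^{\frac{c}{b}z_j^l t}$, which is the assertion.

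The main obstacle is not the algebra but justifying the two facts that make the partial-fraction step and the contour inversion legitimate: that the roots $z_j^l$ of $\theta_l$ (equivalently, of $K_{l+1/2}$) are \emph{simple}, and that they lie in the open left half-plane. Simplicity guarantees single poles in $F$, since repeated roots would introduce $t\,e^{\frac{c}{b}z_j^l t}$ terms and destroy the stated formula; the left-half-plane location places all singularities to the left of the Bromwich line $\Re s=\gamma>0$ and renders the kernel decaying in $t$. Both are classical properties of Bessel polynomials, which I would invoke; the residue evaluation of $\mathscr L^{-1}[F]$ then simply reproduces the coefficient $\tfrac{c}{b}z_j^l$ at each simple pole $s=\tfrac{c}{b}z_j^l$.
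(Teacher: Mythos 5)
Your argument is correct, and it takes a genuinely different route from the paper. The paper does not actually prove Proposition \ref{sigmalthm} at all --- it quotes the formula from \cite{Sofronov1998,Hagstrom07,Wang2Zhao12} and only records the needed facts about the zeros of $K_{l+1/2}$ (there are exactly $l$ of them, simple, in conjugate pairs in the left half-plane, citing \cite[p.~511]{watson}). The closest in-paper analogue is the proof of Theorem \ref{rholthm} for the companion kernel $\rho_l$, which proceeds by Bromwich inversion: close the contour, invoke Jordan's lemma, and evaluate residues at the poles, using the Bessel differential equation to simplify the denominator $\tfrac{3}{2}K'_{l+1/2}+zK''_{l+1/2}$. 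Your route instead exploits that $k_l(z)=\tfrac{\pi}{2}e^{-z}z^{-(l+1)}\theta_l(z)$ with $\theta_l$ the degree-$l$ reverse Bessel polynomial, so that the symbol collapses algebraically to the proper rational function $z\theta_l'/\theta_l-l=\sum_j z_j^l/(z-z_j^l)$; the inversion is then term-by-term via $\mathscr L^{-1}[(s-a)^{-1}]=e^{at}$ with no contour estimate or Jordan's lemma needed, and the residue coefficients $z_j^l$ drop out of the partial-fraction identity rather than from a limit computation. What your approach buys is transparency --- the exponential decay, the count of $l$ terms, and the coefficients are all visible from the polynomial factorization --- at the cost of relying on the same two classical inputs the paper cites (simplicity and left-half-plane location of the zeros), which you correctly flag as the load-bearing hypotheses; a repeated root would indeed produce $t\,e^{ctz_j^l/b}$ terms and break \eqref{kernela}. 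The only cosmetic caveat is that your cancellation $1-(l+1)=-l$ and the identity $\sum_j z/(z-z_j^l)=l+\sum_j z_j^l/(z-z_j^l)$ both silently use that $\theta_l$ has exactly $l$ roots counted with multiplicity, i.e.\ $\deg\theta_l=l$ with nonvanishing constant term $\theta_l(0)=(2l)!/(l!\,2^l)\neq 0$; this is immediate from the explicit coefficients but worth one line.
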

We remark that according to \cite[p. 511]{watson},  $K_{l+1/2}(z)$ has exactly $l$ zeros in conjugate pairs which are simple and lie in the second and third quadrants.  The interested readers might  refer to \cite[Lemma 2.1]{Wang2Zhao12} for more details.

 \smallskip


Remarkably, we can derive  a very similar analytical  formula for the kernel function $\rho_l(t).$  Our starting point is to
rewrite the ratio of the modified Bessel functions in  \eqref{arhol} by using  \eqref{akasympc} as follows
\begin{equation}\label{defnf}
\begin{split}
&z\bigg(\frac{zk_l(z)}{k_l(z)+zk'_l(z)}+1\bigg)
=z\bigg(\frac{zK_{l+1/2}(z)}{\frac{1}{2}K_{l+1/2}(z)+zK'_{l+1/2}(z)}+1\bigg).
\end{split}
\end{equation}
We are interested in the poles of the ratio, i.e.,  zeros of $\frac{1}{2}K_{l+1/2}(z)+zK'_{l+1/2}(z)$. It is  indeed very
fortunate to find the following results in \cite[Lemmas 1-2]{tokita1972} (for more general combination of this form).
\begin{figure}[!ht]
	\subfigure{
		\begin{minipage}[t]{0.4\textwidth}
			\centering
			\rotatebox[origin=cc]{-0}{\includegraphics[width=1\textwidth,height=0.88\textwidth]{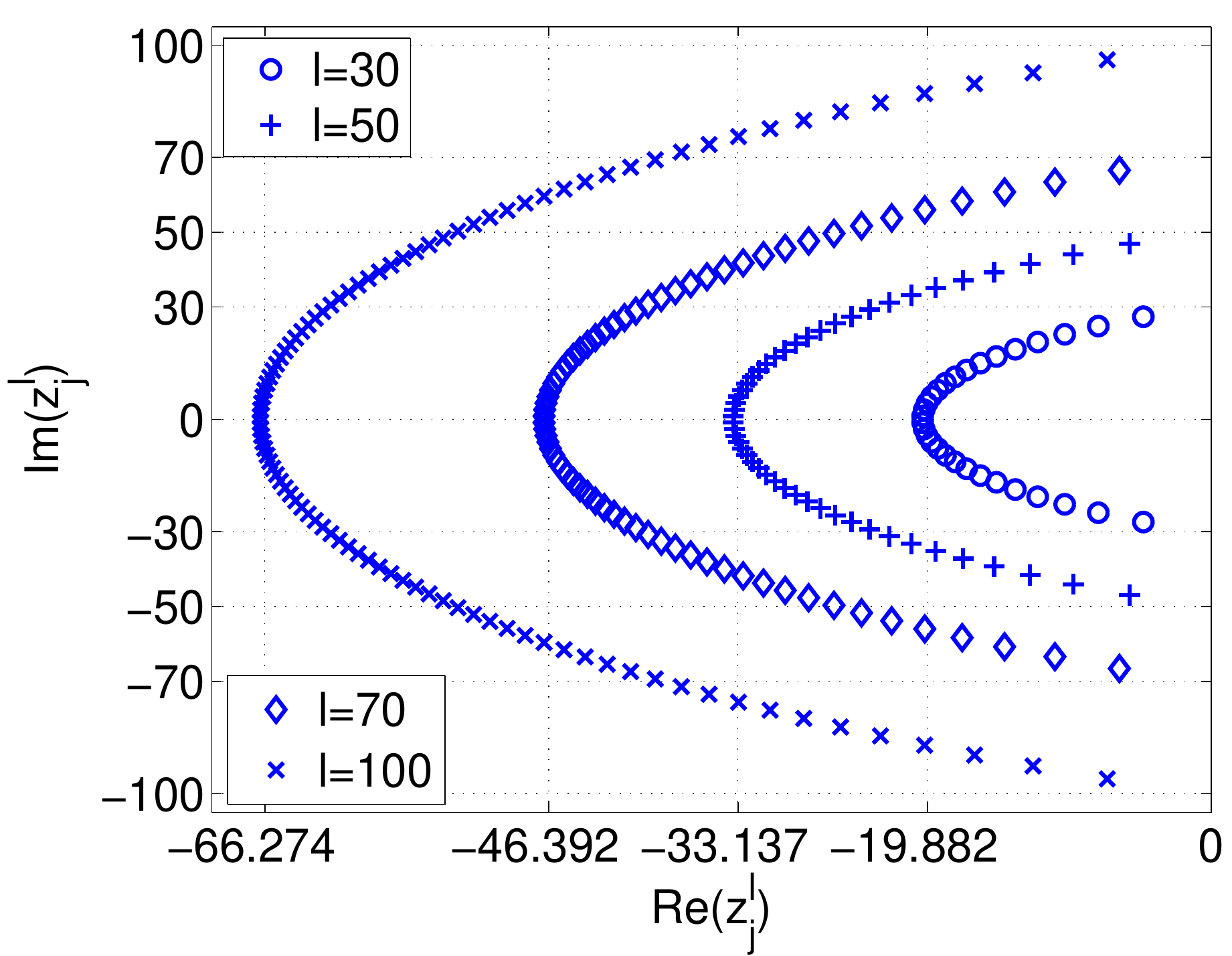}}
	\end{minipage}}\qquad \;\;
	\subfigure{
		\begin{minipage}[t]{0.4\textwidth}
			\centering
			\rotatebox[origin=cc]{-0}{\includegraphics[width=1\textwidth,height=0.88\textwidth]{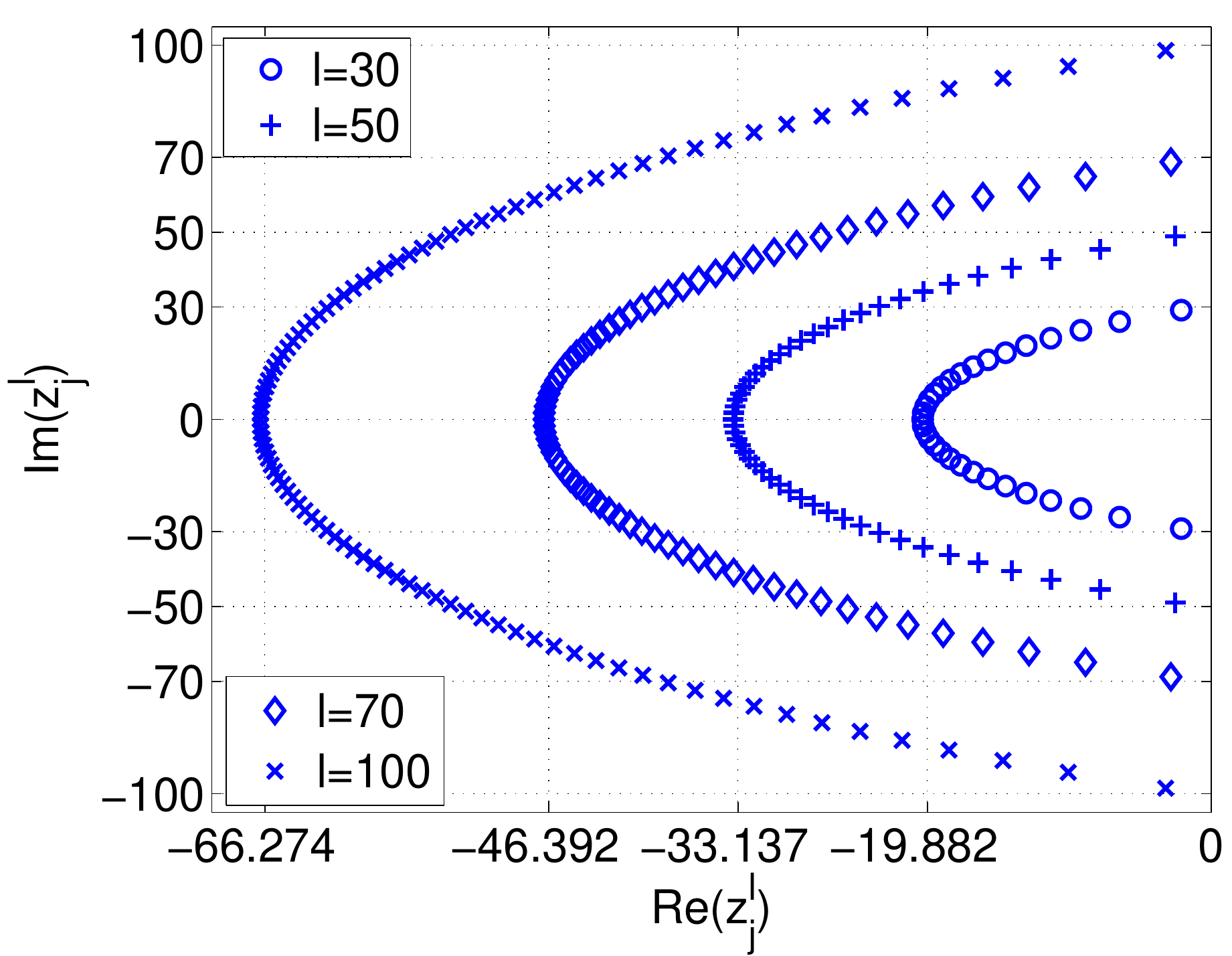}}
	\end{minipage}}
	\vspace*{-10pt} \caption{\small Distributions of the zeros of
		$K_{l+1/2}(z)$ (left) and $\frac{1}{2}K_{l+1/2}(z)+zK'_{l+1/2}(z)$ (right).}\label{Figzerodistr}
\end{figure}
\begin{lemma}\label{zerosprop}
	Let $l$ be a nonnegative    integer. Then we have the following properties.
	\begin{itemize}
		\item[{\rm (a)}] $\frac{1}{2}K_{l+1/2}(z)+zK'_{l+1/2}(z)$ has exactly  $l+1$ zeros.
		\item[{\rm (b)}] If $z_*$ is a zero of $\frac{1}{2}K_{l+1/2}(z)+zK'_{l+1/2}(z)$, then its complex conjugate $\bar z_*$ is also a zero. 
		\item[{\rm (c)}] All zeros of $\frac{1}{2}K_{l+1/2}(z)+zK'_{l+1/2}(z)$ are simple and have  negative real parts, so  they lie in
		the left half of the complex plane.
	\end{itemize}
\end{lemma}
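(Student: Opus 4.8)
The plan is to turn the transcendental condition into a polynomial root problem and then locate the roots; I treat $l\ge1$, the range relevant to the NRBC. Since $\nu=l+1/2$ is a half-integer, $K_{l+1/2}$ is elementary: one may write $K_{l+1/2}(z)=\sqrt{\pi/2}\,e^{-z}z^{-l-1/2}q_l(z)$, where $q_l$ is the monic reverse Bessel polynomial of degree $l$ with \emph{positive} coefficients. Differentiating and collecting terms gives
\begin{equation*}
\tfrac12 K_{l+1/2}(z)+zK_{l+1/2}'(z)=\sqrt{\tfrac{\pi}{2}}\,e^{-z}z^{-l-1/2}R_l(z),\qquad R_l(z):=z\,q_l'(z)-(z+l)\,q_l(z).
\end{equation*}
Away from $z=0$ the zeros in question are therefore exactly the roots of $R_l$ (equivalently of $k_l(z)+zk_l'(z)$). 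A degree count shows $R_l$ has degree $l+1$ and leading coefficient $-1$, with $R_l(0)=-l\,q_l(0)\neq0$; this gives part (a). Its coefficients are real, so non-real roots come in conjugate pairs, which is part (b). Writing $q_l=\sum_j a_jz^j$ with $a_j>0$, one finds $R_l=\sum_j\big[(j-l)a_j-a_{j-1}\big]z^j$ with every coefficient $\le0$ and constant term $<0$; hence $R_l(z)<0$ for all real $z\ge0$, so $R_l$ has no nonnegative real root.

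For part (c) I would first prove the location statement and then deduce simplicity. The engine is a radial energy identity. If $R_l(z_0)=0$, then $u(r):=k_l(z_0 r)$ solves the modified spherical Bessel equation $(r^2u')'=(z_0^2r^2+\beta_l)u$ on $r>1$, and the vanishing condition is exactly the Robin condition $u(1)+u'(1)=0$. When $\Re z_0>0$, $u$ decays like $e^{-z_0 r}$, the weighted integrals converge, and Green's identity yields
\begin{equation*}
z_0^2\int_1^\infty r^2|u|^2\,dr=|u(1)|^2-\int_1^\infty r^2|u'|^2\,dr-\beta_l\int_1^\infty|u|^2\,dr .
\end{equation*}
The right-hand side is real, so $z_0^2\in\mathbb R$; as $\Re z_0>0$ this forces $z_0$ real and positive, contradicting the absence of nonnegative real roots. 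Thus $R_l$ has no root with $\Re z_0>0$.

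The remaining and main obstacle is the imaginary axis, where the energy method fails because $k_l(i\omega r)$ does not decay. Here I would use the far-field flux instead: for $z_0=i\omega$ the function $u$ is a spherical Hankel function with $u(r)\sim c\,e^{\pm i\omega r}/r$, $c\neq0$, so $r^2\bar u\,u'\to\pm i\omega|c|^2$ as $r\to\infty$, whereas the Robin condition makes $\bar u(1)u'(1)=-|u(1)|^2$ real; since the bulk term in Green's identity is real, $\Im(r^2\bar u\,u')$ is independent of $r$ and must vanish, a contradiction. Hence there is no purely imaginary root, and together with the previous paragraph all $l+1$ roots lie in the open left half-plane.

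Simplicity then comes cheaply from the differential equation $z\,q_l''-2(l+z)\,q_l'+2l\,q_l=0$ satisfied by $q_l$: eliminating $q_l''$ gives $R_l'=(z+l+1)q_l'-(2l+1)q_l$, and a hypothetical common root $z_0$ of $R_l$ and $R_l'$—after noting $q_l(z_0)\neq0$, since the zeros of $q_l$ (i.e.\ of $K_{l+1/2}$) are simple by Proposition~\ref{sigmalthm}—forces $(z_0+l)(z_0+l+1)=(2l+1)z_0$, that is $z_0^2=-\beta_l$, which is purely imaginary and hence already excluded. Therefore all roots are simple, completing part (c). The delicate point throughout is the root-location argument; as a shortcut one may instead invoke the more general Lemmas~1--2 of \cite{tokita1972}.
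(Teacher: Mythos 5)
Your proof is correct, but it takes a genuinely different route from the paper: the paper offers no proof of Lemma~\ref{zerosprop} at all and simply imports it from Lemmas 1--2 of \cite{tokita1972}, which cover the general combination $hK_{\nu}(z)+zK_{\nu}'(z)$. You instead factor out $\sqrt{\pi/2}\,e^{-z}z^{-l-1/2}$ and reduce everything to the reverse Bessel polynomial $R_l(z)=zq_l'(z)-(z+l)q_l(z)$ of degree $l+1$: parts (a), (b) and the exclusion of the nonnegative real axis follow from a degree and sign count; the open left half-plane location follows from the radial energy identity for $u(r)=k_l(z_0r)$ under the Robin condition $u(1)+u'(1)=0$, completed by the far-field flux argument on the imaginary axis where the energy method degenerates (you are right that this is the delicate step, and your treatment is sound: $\Im\bigl(r^2\bar u\,u'\bigr)$ is constant in $r$, vanishes at $r=1$, yet tends to the nonzero limit $-\omega|c|^2$); and simplicity follows from the Bessel-polynomial ODE, which confines any double root to $z_0^2=-\beta_l$, already excluded. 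What this buys is a self-contained, checkable argument together with the explicit polynomial $R_l$ whose roots are precisely the $\tilde z_j^l$ one must compute for Theorem~\ref{rholthm}; what the paper's citation buys is brevity and greater generality. One remark: you restrict to $l\ge1$, and rightly so --- for $l=0$ one has $\tfrac12K_{1/2}(z)+zK_{1/2}'(z)=-\sqrt{\pi/2}\,e^{-z}z^{1/2}$, which has no zero away from the branch point, so the lemma as stated degenerates there; this is harmless because the sums in \eqref{oldEtM} start at $l=1$ (and indeed $\rho_0\equiv0$), but it is the statement, not your proof, that needs the restriction.
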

\begin{wrapfigure}{r}{0.41\textwidth}
	\center
	\vspace{-2em}
	\includegraphics[scale=0.63]{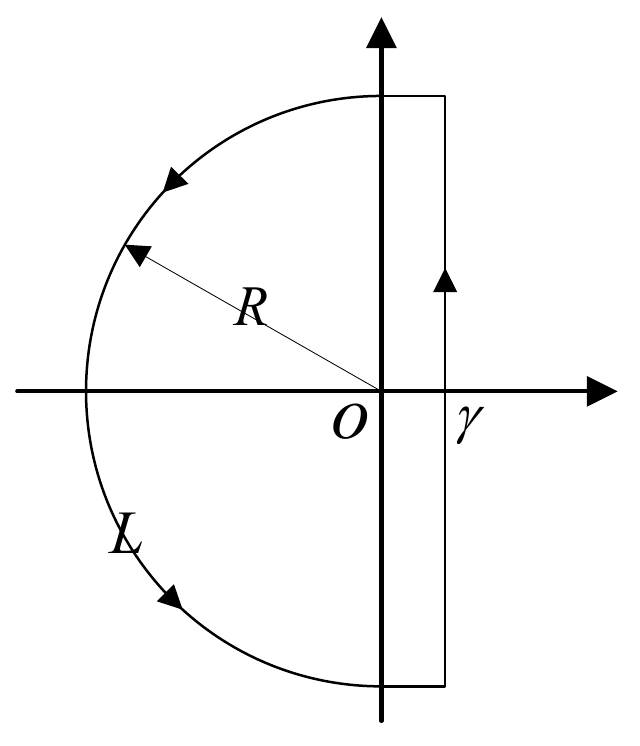}
	\vspace{-1em}
	\caption{\small \small  Contour $L$ for the inverse Laplace transform.}
	\vspace{-2em}
	\label{contourL}
\end{wrapfigure}
We illustrate  in Figure \ref{Figzerodistr} the distribution  of  zeros of
$K_{l+1/2}(z)$ (left) and $\frac{1}{2}K_{l+1/2}(z)+zK'_{l+1/2}(z)$ (right) for various $l.$ We observe that for
a given $l$, the zeros of $\frac{1}{2}K_{l+1/2}(z)+zK'_{l+1/2}(z)$ have a  distribution very similar to  those of $K_{l+1/2}(z)$, that is,  sitting on  the left half boundary of an
eye-shaped domain  that intersects the imaginary axis approximately
at $\pm  l\ri,$ and the negative real axis at $-la$ with $a\approx
0.66274$ (see the vertical  dashed  coordinate grids). Such a behaviour is very similar to that of $K_{l+1/2}(z)$ (cf.
\cite{Wang2Zhao12}).


\smallskip
With the above understanding, we now ready to  present the analytical formula for
the convolution kernel function $\rho_l(t)$.
\begin{theorem}\label{rholthm}
	Let $\{\tilde z_j^l\}_{j=1}^{l+1}$ be the zeros of $\frac{1}{2}K_{l+1/2}(z)+zK'_{l+1/2}(z)$ with integer $l\ge 0$. 
	Then we can compute $\rho_l(t)$ in \eqref{arhol} via
	\begin{equation}\label{kernelrho}
	\begin{split}
	\rho_{l}(t)&
	=\frac{c} b \sum_{j=1}^{l+1}\frac{(\tilde z_j^l)^3}{l(l+1)+(\tilde z_j^l)^2}\, e^{ct \tilde z_j^l /b}+
	\delta(t)\sum_{j=1}^{l+1}\frac{(\tilde z_j^l)^2}{l(l+1)+(\tilde z_j^l)^2},
	\end{split}
	\end{equation}
	where  $\delta(t)$ is the Dirac delta function.
\end{theorem}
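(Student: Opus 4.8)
The plan is to read \eqref{arhol}--\eqref{defnf} as the statement that, apart from the outer factor $z$, the symbol of $\rho_l$ is a \emph{rational} function of $z$, and then to recover $\rho_l$ by a residue calculation. Rationality is the crucial structural fact: for the half-integer order the modified spherical Bessel function is elementary, $k_l(z)=\frac{\pi}{2}\frac{e^{-z}}{z}\sum_{k=0}^{l}\frac{(l+k)!}{k!\,(l-k)!\,(2z)^k}$ (cf. \cite{watson}), so $k_l$ and $k_l'$ carry the common factor $e^{-z}$, which cancels in the quotient $zk_l/(k_l+zk_l')$. Hence $\Xi(z):=z\big(zk_l(z)/(k_l(z)+zk_l'(z))+1\big)$ is rational, with no branch cut or essential singularity; by the identity \eqref{defnf} its poles are the zeros $\{\tilde z_j^l\}_{j=1}^{l+1}$ of $\tfrac12 K_{l+1/2}+zK_{l+1/2}'$ furnished by Lemma \ref{zerosprop}, and are therefore simple and confined to the open left half-plane.

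The decisive step is to compute the residue of $zk_l/D$ at a zero $\tilde z_j:=\tilde z_j^l$ of $D(z):=k_l(z)+zk_l'(z)$ without differentiating the Bessel function by hand. I would invoke the modified spherical Bessel equation $z^2k_l''+2zk_l'-(z^2+l(l+1))k_l=0$. Since $D'=2k_l'+zk_l''$ and the ODE gives $zk_l''=\big((z^2+l(l+1))/z\big)k_l-2k_l'$, the two $k_l'$ terms cancel and one obtains the clean identity $D'(z)=\big((z^2+l(l+1))/z\big)k_l(z)$, valid for all $z$. Consequently $\rho_j:=\tilde z_j k_l(\tilde z_j)/D'(\tilde z_j)=\tilde z_j^{\,2}/(l(l+1)+\tilde z_j^{\,2})$. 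Because $k_l'/k_l\to-1$ at infinity one has $zk_l/D\to-1$, so the partial-fraction expansion reads $zk_l/D=-1+\sum_{j=1}^{l+1}\rho_j/(z-\tilde z_j)$.

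It then remains to restore the outer $z$ and invert. The $+1$ cancels the constant $-1$, giving $\Xi(z)=z\sum_j\rho_j/(z-\tilde z_j)$; writing $z/(z-\tilde z_j)=1+\tilde z_j/(z-\tilde z_j)$ splits $\Xi$ into a constant part $\sum_j\rho_j$ and a strictly decaying part $\sum_j\rho_j\tilde z_j/(z-\tilde z_j)$. Putting $z=sb/c$ and inverting term by term with $\mathscr L^{-1}[1]=\delta(t)$ and $\mathscr L^{-1}[(s-a)^{-1}]=e^{at}$ — each pole giving $s=(c/b)\tilde z_j$, which lies in the left half-plane by Lemma \ref{zerosprop}(c) so the exponential is causal — reproduces \eqref{kernelrho}: the constant part yields $\delta(t)\sum_j\tilde z_j^{\,2}/(l(l+1)+\tilde z_j^{\,2})$, and the $j$-th decaying term yields $\tfrac{c}{b}\,\rho_j\tilde z_j\,e^{ct\tilde z_j/b}=\tfrac{c}{b}\,\tilde z_j^{\,3}/(l(l+1)+\tilde z_j^{\,2})\,e^{ct\tilde z_j/b}$.

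I expect the residue step to be the real obstacle: a direct differentiation of $D=k_l+zk_l'$ produces an unwieldy combination of $k_l,k_l',k_l''$, and the whole formula hinges on its collapsing to the single factor $l(l+1)+\tilde z_j^{\,2}$; the ODE substitution is exactly what forces this collapse, the cancellation of the $2k_l'$ terms being the small miracle that makes $D'$ proportional to $k_l$ alone. One must also confirm simplicity of the poles (Lemma \ref{zerosprop}(c)) so the single-pole residue formula applies, and that nothing extra hides at $z=0$ or at infinity beyond the constant already extracted; the finite closed form of $k_l$ settles both points. As a consistency check, the $\delta$-coefficient equals the constant part $\sum_j\rho_j=\lim_{z\to\infty}\Xi(z)$, and the one-line asymptotic $k_l'/k_l=-1-\tfrac{1}{2z}-\tfrac{l(l+1)}{2z^2}+\cdots$ shows this limit is $0$ for every $l\ge 1$, so the Dirac term in \eqref{kernelrho} is in fact identically zero (the case $l=0$ being trivial, since $zk_0/(k_0+zk_0')\equiv-1$ gives $\rho_0\equiv0$).
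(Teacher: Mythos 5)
Your argument is correct and rests on exactly the same two pillars as the paper's proof: the poles of the symbol are the zeros $\{\tilde z_j^l\}$ of $\tfrac12 K_{l+1/2}+zK_{l+1/2}'$ supplied by Lemma \ref{zerosprop}, and the residue at each pole collapses to $\tilde z_j^2/(l(l+1)+\tilde z_j^2)$ precisely because the Bessel ODE turns the derivative of the denominator into a multiple of the Bessel function alone (the paper performs this same cancellation for $\tfrac32 K_{l+1/2}'+zK_{l+1/2}''$; your identity $D'(z)=\big((z^2+l(l+1))/z\big)k_l(z)$ is the spherical-Bessel form of it). The packaging differs: the paper peels off the outer factor $z$ via $\mathscr L^{-1}[sf(s)]=f'(t)+f(0)\delta(t)$ and evaluates $\tilde\rho_l$ by closing a contour and invoking Jordan's lemma, whereas you use the finite closed form of $k_l$ to see that the symbol is rational, write the complete partial-fraction decomposition $zk_l/D=-1+\sum_j\rho_j/(z-\tilde z_j)$ (the constant $-1$ being the ratio of leading coefficients), and invert term by term. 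This dispenses with the contour argument and makes the origin of the $\delta$-term transparent, at the price of having to justify that the decomposition is complete --- which your degree count and the simplicity of the zeros (Lemma \ref{zerosprop}) do. One slip to flag, in the final consistency check only: the correct expansion is $k_l'/k_l=-1-\tfrac1z-\tfrac{l(l+1)}{2z^2}+\cdots$ (from $k_l\sim\tfrac{\pi}{2}\tfrac{e^{-z}}{z}\big(1+\tfrac{l(l+1)}{2z}+\cdots\big)$), not $-1-\tfrac{1}{2z}-\cdots$; with the expansion as you wrote it, $\lim_{z\to\infty}\Xi(z)$ would come out $-\tfrac12$, contradicting your own conclusion. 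With the corrected expansion one gets $\Xi(z)=\tfrac{l(l+1)}{2z}+O(z^{-2})$, so your claim that $\sum_j\rho_j=0$ --- i.e.\ that the Dirac term in \eqref{kernelrho} vanishes identically --- is indeed true (and easily confirmed for $l=1$ from $\tilde z^2+\tilde z+1=0$); this is a nice observation the paper does not make. None of this affects the validity of the main argument.
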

\begin{proof}
 Using  the property
 \begin{equation}\label{invLapf}
 \mathscr L^{-1}[sf(s)](t)=f'(t)+f(0)\delta(t),
 \end{equation}
 we obtain from \eqref{arhol} and \eqref{defnf} that
	\begin{equation}\label{rhol}
	\rho_l(t)=\mathscr L^{-1}\bigg[z\,\bigg(\frac{z\, k_l(z)}{k_l(z)+z\, k_l'(z)}+1\bigg)\bigg](t)=\frac{b}{c}\big(\tilde\rho'_{l}(t)+\tilde\rho_{l}(0)\delta(t)\big),
	\end{equation}
	where $z={sb}/{c},$ and with a change of variable $s=cz/b,$ we have
	\begin{equation}\label{trhol}
	\begin{split}
	\tilde\rho_{l}(t)&=\frac{1}{2\pi \ri}\frac{c}{b}\int_{\gamma-\infty \ri}^{\gamma+\infty\ri}\bigg[\frac{zk_l(z)}{k_l(z)+zk'_l(z)}+1\bigg] e^{czt/b} dz
	:=\frac{1}{2\pi \ri}\frac{c}{b}\int_{\gamma-\infty \ri}^{\gamma+\infty\ri}F_{l}(z) e^{czt/b} dz.
	\end{split}
	\end{equation}
	In view of the formula
	(see \cite[10.49.12]{olver2010nist}):
	\begin{eqnarray}
	k_{l}(z)=\frac{\pi}{2}\sum\limits_{k=0}^l\frac{(l+k)!e^{-z}}{2^kk!(l-k)!z^{k+1}},\quad
 l\ge 0,
	\end{eqnarray}
	and Lemma \ref{zerosprop}, we conclude that  $F_l(z)$ is a meromorphic function.
	We introduce the closed contour $L$ as depicted in  Figure \ref{contourL}. Using the residue theorem and
	Jordan's Lemma, we have
	\begin{equation*}
	2\pi \ri \sum\limits_{j=1}^{l+1}{\rm Res}\,\big[F_{l}(z)
	e^{czt/b}; \tilde z_{j}^{l}\big]=\lim_{R\to+\infty}\oint_L F_{l}(z)e^{czt/b}dz
	=\int_{\gamma-\infty \ri}^{\gamma+\infty\ri}F_{l}(z) e^{czt/b} dz.
	\end{equation*}
From \eqref{akasympc}, we calculate that
	\begin{equation}\label{Fznp}
	\begin{split}
	\frac{b}{c}\tilde \rho_l(t)&= \sum\limits_{j=1}^{l+1}{\rm Res}\,\big[F_{l}(z)
	e^{czt/b}; \tilde z_{j}^{l}\big]\\
	&=\sum\limits_{j=1}^{l+1}\lim_{z\rightarrow\tilde z_{j}^{l}}
	\bigg\{\big(z-\tilde z_{j}^{l}\big) e^{ctz/b}\bigg[\frac{zK_{l+1/2}(z)}{\frac{1}{2}K_{l+1/2}(z)+zK'_{l+1/2}(z)}+1\bigg]\bigg\}
	\\&=\sum\limits_{j=1}^{l+1}
	\frac{e^{ct\tilde z_{j}^{l}/b}\tilde z_{j}^{l}K_{l+1/2}(\tilde z_{j}^{l})}
	{\frac{3}{2}K'_{l+1/2}(\tilde z_{j}^{l})+\tilde z_{j}^{l}K''_{l+1/2}(\tilde z_{j}^{l})}.
	\end{split}
	\end{equation}
	Since $K_{l+1/2}(z)$ satisfies  the equation (cf. \cite{watson})
	\[
	z^2\frac{d^2w}{dz^2}+z\frac{dw}{dz}-\Big(z^2+(l+1/2)^2\Big)w=0,
	\]
	we have
	\[
	\begin{split}
	z^2K''_{l+1/2}(z)+\frac{3}{2}zK'_{l+1/2}(z)&=\big(z^2+(l+1/2)^2\big)K_{l+1/2}(z)-zK'_{l+1/2}(z)+\frac{3}{2}zK'_{l+1/2}(z)
	\\&=\big(z^2+l(l+1)\big)K_{l+1/2}(z)+\frac{1}{2}\bigg(\frac{1}{2}K_{l+1/2}(z)+zK'_{l+1/2}(z)\bigg).
	\end{split}
	\]
	A combination of  \eqref{Fznp} and the fact that $\{\tilde z_j^l\}_{j=1}^{l+1}$ are zeros of $\frac{1}{2}K_{l+1/2}(z)+zK'_{l+1/2}(z)$ yields
	\begin{equation}\label{Fznp1}
	\begin{split}
	\tilde\rho_{l}(t)&
	=\frac{c}{b}\sum\limits_{j=1}^{l+1}
	\frac{e^{ct\tilde z_{j}^{l}/b}\big(\tilde z_{j}^{l}\big)^2K_{l+1/2}(\tilde z_{j}^{l})}
	{\big[\big(\tilde z_{j}^{l}\big)^2+l(l+1)\big]K_{l+1/2}(\tilde z_{j}^{l})}
	=\frac{c}{b}\sum\limits_{j=1}^{l+1}
	\frac{e^{ct\tilde z_{j}^{l}/b}\big(\tilde z_{j}^{l}\big)^2}
	{\big(\tilde z_{j}^{l}\big)^2+l(l+1)}.
	\end{split}
	\end{equation}
	Inserting \eqref{Fznp1} into \eqref{rhol} leads to the expression of $\rho_l(t)$ in \eqref{kernelrho}.
\end{proof}

Having addressed the first issue on how to  compute  the convolution kernel functions, we now introduce an efficient technique to alleviate the historical burden of  temporal convolutions involved in the capacity operator \eqref{oldEtM}.
	Observe from  \eqref{kernela} and \eqref{kernelrho} that  the time variable  $t$ only
	presents  in the complex exponentials. As a result,  the temporal convolutions can be evaluated recursively  as shown  in e.g., \cite{Alpert02,Wang2Zhao12}. More precisely, given a continuous function $g(t)$, we define
	\begin{equation}\label{ftzasy}
	f(t;z):=e^{ctz/b}\ast g(t)=\int_0^t e^{c(t-\tau)z/b}g(\tau)d\tau.
	\end{equation}
	Then by \eqref{kernela} and \eqref{kernelrho},  
	\begin{subequations}\label{recurdelta7}
		\begin{align}
		(\sigma_l\ast g)(t)& =\frac c {b} \sum_{j=1}^{l}z_j^l f(t;z_j^l);
		\label{recurdeltomg}\\
		(\rho_l\ast g)(t)&=\frac{c} b \sum_{j=1}^{l+1}\frac{\big(\tilde z_j^l\big)^3 f(t;\tilde z_j^l)}{\big(\tilde z_j^l\big)^2+l(l+1)}+g(t)\sum_{j=1}^{l+1}\frac{\big(\tilde z_j^l\big)^2}{\big(\tilde z_j^l\big)^2+l(l+1)}.\label{recurdeltrho}
		\end{align}
	\end{subequations}
	One verifies readily  that
	\begin{equation}\label{fastconvformula}
	f(t+\Delta t;z)=e^{c\Delta t\,z/b}f(t; z)+\int_t^{t+\Delta t}e^{c(t+\Delta t-\tau)z/b}g(\tau)\,d\tau,
	\end{equation}
	so $f(t; z)$ can march in $t$ with step size $\Delta t$ recursively.
	As a result, the temporal convolution in the NRBC can be computed efficiently with the  explicit expressions \eqref{kernela} and \eqref{kernelrho}, and with the above fast  recursive algorithm.

\smallskip
Next, we provide some numerical results to demonstrate the high accuracy in computing   $\rho_l(t)$ and the related convolution in  \eqref{kernelrho}. Let  $\phi(t), t\ge 0$ be a given differentiable function such that $\phi(0)=0.$ As shown in \cite{Wang2Zhao12},  $(\sigma_l\ast \phi)(t)$ can be computed very accurately (i.e., using Proposition \ref{sigmalthm} and \eqref{recurdeltomg}).  Let $\psi_l(t)$ be a function associated with $\phi(t)$ through
\begin{equation}\label{psiphi}
\mathscr{L}[\psi_l](s)=\frac{k_l(z)+zk'_l(z)}{k_l(z)}\mathscr{L}[\phi](s)
=\left(\!1+z+z\frac{k'_l(z)}{k_l(z)}\right)\!\mathscr{L}[\phi](s)-z\mathscr{L}[\phi](s),
\end{equation}
where $z=sb/c.$ Applying the inverse Laplace transform and  using the definition of $\sigma_l(t)$ in \eqref{arhol}, we obtain
from \eqref{invLapf}, $\phi(0)=0$ and Proposition  \ref{sigmalthm}  that
\begin{equation}\label{psi}
\psi_l(t)=(\sigma_l\ast \phi)(t)-\frac{b}{c}\phi'(t)=\frac{c} b\sum_{j=1}^l (z_j^l)^2 e^{ct z_j^l /b}\ast\phi(t)+\phi(t)\sum_{j=1}^l z_j^l.
\end{equation}

We next present two  ways to compute $(\rho_l\ast\psi_l)(t),$ where the first one only requires the use of the formula for $\sigma_l(t).$ Indeed,  we have from \eqref{arhol}, \eqref{invLapf} and \eqref{psiphi} that
\begin{equation}\label{identityA}
\begin{split}
(\rho_{l}\ast\psi_l)(t)&=\mathscr L^{-1}\bigg[z\Big(\frac{z\, k_l(z)}{k_l(z)+z\, k_l'(z)}+1\Big)\mathscr{L}[\psi_l]\bigg]
=\mathscr L^{-1}\big[z^2\mathscr{L}[\phi] +z\mathscr{L}[\psi_l]\big]\\
&=\mathscr L^{-1}\bigg[\Big(1+z+z\frac{k'_l(z)}{k_l(z)}\Big)z\mathscr{L}[\phi]\bigg]=\sigma_l\ast
\mathscr L^{-1}\big[z\mathscr{L}[\phi]\big]= (\sigma_l\ast \phi')(t).
\end{split}
\end{equation}
Then, by Proposition \ref{sigmalthm} and integration by parts, we find
\begin{equation}\label{conv2}
\begin{split}
(\rho_{l}\ast\psi_l)(t)
&= \sum_{j=1}^l z_j^l e^{ct z_j^l /b}\ast \phi'(t)
=\frac{c} b\sum_{j=1}^l (z_j^l)^2 e^{ct z_j^l /b}\ast\phi(t)+\phi(t)\sum_{j=1}^l z_j^l.
\end{split}
\end{equation}

On the other hand, using Theorem \ref{rholthm} and the relation \eqref{psi}  leads to
\begin{equation}\label{conv1}
\begin{split}
(\rho_{l}\ast\psi_l)(t)&=\frac{c} b \sum_{j=1}^{l+1}\frac{\big(\tilde z_j^l\big)^3}{\big(\tilde z_j^l\big)^2+l(l+1)} e^{ct \tilde z_j^l /b}\ast\psi_l(t)+
\psi_l(t)\sum_{j=1}^{l+1}\frac{\big(\tilde z_j^l\big)^2}{\big(\tilde z_j^l\big)^2+l(l+1)},
\end{split}
\end{equation}
where $\psi_l$ is computed by \eqref{psi}. It is evident that the convolutions in \eqref{conv2}-\eqref{conv1}    can be evaluated by using \eqref{fastconvformula}.

%
%
%
%
%
%
It is seen that \eqref{conv2} and \eqref{conv1} are equivalent, but the former solely involves $\sigma_l(t),$ which can used as a reference to check the accuracy of  $\rho_l(t).$   We take $b=3, c=5$ and $\phi(t)=\sin^6(8t)$,  so we can compute the two functions and convolutions with exponential functions  in  \eqref{conv2}-\eqref{conv1} exactly.
We tabulate  in Table \ref{tb1} the relative errors
\[
e_l(t)=\frac{|f_l(t)-\tilde f_l(t)|}{|\tilde f_l(t)|},
\]
where $f_l(t)$ and $\tilde f_l(t)$ denote the convolution computed by  \eqref{conv1} and \eqref{conv2}, respectively.
We can see that the relative errors are of machine accuracy,  which validate the formula \eqref{kernelrho}.
\begin{table}[!ht]
{\small
		\begin{center}
			\caption{\normalsize The relative error $e_l(t)$ for different $l$ and $t$.}\label{tb1}
			\begin{tabular}{|c|c|c|c|c|}
				\hline \multicolumn{1}{|c|}{$l$}&
				\multicolumn{1}{c|}{$t=1$}&
				\multicolumn{1}{c|}{$t=2$}&
				\multicolumn{1}{c|}{$t=4$}&
				\multicolumn{1}{c|}{$t=10$}
				\\
				\cline{1-5}
				1 &   3.0366e-015 & 1.9953e-016 & 2.6091e-015 & 2.5178e-015    \\
				\hline
				5   &    2.0593e-015 & 7.5062e-015 & 3.8667e-015 & 1.7176e-014 \\
				\hline
				10   &  8.2695e-015 & 1.7839e-014 & 3.5474e-014 & 1.5205e-016 \\
				\hline
				15  &   8.8691e-015 & 3.5764e-014 & 1.1748e-014 & 1.5469e-015 \\
				\hline
				30   &  4.4403e-015 & 3.0733e-015 & 8.0434e-015 & 2.8513e-015 \\
				\hline
				50  &  5.0412e-015 & 3.0602e-015 & 1.1924e-016 & 2.9790e-015\\
				\hline
			\end{tabular}
		\end{center}}
\end{table}


\begin{rem}\label{rmk::poleapprox}
	It is clear that the number of zeros to be used is determined by the truncation of the expansion \eqref{Eexpansion}.  If $l$ is large, the pole compression algorithm (cf. \cite{Alpert, jiang2008efficient}) can be adopted to obtain approximations for the NRBKs: $\sigma_l(t)$ and $\rho_l(t)$. The approximated kernels have the same form as in \eqref{kernela} and \eqref{kernelrho} while the number of poles in the summation has been reduced significantly.
\end{rem}

\subsection{An alternative formulation of the capacity operator}\label{sectnewNRBC}
It is seen from \eqref{oldEtM} that the capacity operator $\mathscr{T}_b[\bs E^{\rm sc}]$ only involves the tangential component
   of the vector field $\bs E^{\rm sc}\in {\mathbb H}_0({\rm div};\Omega).$   In fact, as shown in Proposition \ref{EFsinsol},
   the  VSH expansion coefficients for a divergence-free field satisfy some relation that allows us to derive the following alternative representation of the capacity operator in Theorem \ref{theo21}. We find that it has certain advantage in the application in the forthcoming section.
\begin{theorem}\label{newthmA0} The time-domain capacity operator ${\mathscr T}_b$ in Theorem \ref{theo21}  can also be formulated as
\begin{equation}\label{newEtM00}
{\mathscr T}_b[\bs E^{\rm sc}]=\frac c b \sum_{l=1}^\infty\sum_{|m|=0}^l\bigg\{
\frac{\omega_l\ast {\psi}_{lm}^{r}}{l(l+1)}\,{\bs \Psi}_l^m+\big(\sigma_l\ast {\psi}_{lm}^{(2)}\big)\,\vt\bigg\},
\end{equation}
where $\sigma_l$ is given in \eqref{arhol} and
\begin{equation}\label{kernelomega00}
\begin{split}
\omega_l(t)=\frac b c\big(\sigma_l'(t)+\sigma_l(0)\delta(t)\big)=\frac{c} b \sum_{j=1}^{l}\big(z_j^l\big)^2 e^{\frac{c}{b} z_j^lt}+\delta(t)\sum_{j=1}^{l}z_j^l.
\end{split}
\end{equation}
Here, the expression \eqref{newEtM00} involves two of the VSH expansion coefficients $\{{\psi}^{r}_{lm}, {\psi}^{(1)}_{lm}, {\psi}^{(2)}_{lm}\}$  of $\bs E^{\rm sc}$ in \eqref{vshexpscatfield}.
\end{theorem}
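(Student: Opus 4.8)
The plan is to exploit that the two formulations \eqref{oldEtM} and \eqref{newEtM00} differ \emph{only} in the $\bs\Psi_l^m$-component: the $\vt$-terms are literally identical ($\sigma_l\ast\psi^{(2)}_{lm}$ in both). Hence the whole theorem reduces to the single scalar identity
\begin{equation*}
\rho_l\ast\psi^{(1)}_{lm}=\frac{1}{l(l+1)}\,\omega_l\ast\psi^{r}_{lm},\qquad l\ge 1.
\end{equation*}
Being temporal convolutions, I would pass to the Laplace domain, where (writing $\beta_l=l(l+1)$) this becomes the purely algebraic statement $\mathscr{L}[\rho_l]\,\breve\psi^{(1)}_{lm}=\beta_l^{-1}\mathscr{L}[\omega_l]\,\breve\psi^{r}_{lm}$.

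First I would record the transform factors. From \eqref{kernelomega00} together with the operational rule \eqref{invLapf} one has $\mathscr{L}[\omega_l]=\frac{b}{c}\,s\,\mathscr{L}[\sigma_l]=z\,\mathscr{L}[\sigma_l]$ with $z=sb/c$; the explicit pole expansion of $\omega_l$ in \eqref{kernelomega00} then follows routinely by differentiating the formula $\sigma_l(t)=\frac{c}{b}\sum_j z_j^l e^{cz_j^l t/b}$ of Proposition \ref{sigmalthm}. Combining this with \eqref{arhol} gives
\begin{equation*}
\frac{\mathscr{L}[\omega_l]}{\beta_l}=\frac{z}{\beta_l}\Big(1+z+z\frac{k_l'(z)}{k_l(z)}\Big),
\qquad
\mathscr{L}[\rho_l]=z\Big(\frac{z\,k_l(z)}{k_l(z)+z\,k_l'(z)}+1\Big).
\end{equation*}

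The crucial step is to supply the ratio $\breve\psi^{r}_{lm}/\breve\psi^{(1)}_{lm}$ at $r=b$, and this is exactly where the divergence-free structure enters. Because $\bs E^{\rm sc}$ is solenoidal, its radial and $\bs\Psi_l^m$ coefficients are \emph{not} independent: by the expansion \eqref{conseptequiv} of Proposition \ref{EFsinsol} they are both generated by a single scalar potential $\tilde u_{lm}(r)$ through $\psi^{r}_{lm}=\beta_l r^{-1}\tilde u_{lm}$ and $\psi^{(1)}_{lm}=\hat\partial_r\tilde u_{lm}$. For the outgoing exterior solution this potential is the $\breve v_{lm}\propto k_l(sr/c)$ of \eqref{solutionLap}; evaluating at $r=b$ and using $\hat\partial_r k_l(sr/c)\big|_{r=b}=\frac{1}{b}\big(k_l(z)+z\,k_l'(z)\big)$, the proportionality constant cancels in the ratio and leaves
\begin{equation*}
\frac{\breve\psi^{r}_{lm}}{\breve\psi^{(1)}_{lm}}\bigg|_{r=b}=\frac{\beta_l\,k_l(z)}{k_l(z)+z\,k_l'(z)}.
\end{equation*}
Inverting this relation to express $\breve\psi^{(1)}_{lm}$ through $\breve\psi^{r}_{lm}$ and inserting it into $\mathscr{L}[\rho_l]\,\breve\psi^{(1)}_{lm}$, the common factor $k_l(z)+z\,k_l'(z)$ cancels and one is left precisely with $\frac{z}{\beta_l}\big(1+z+z\,k_l'(z)/k_l(z)\big)\,\breve\psi^{r}_{lm}=\beta_l^{-1}\mathscr{L}[\omega_l]\,\breve\psi^{r}_{lm}$, which is the desired identity; taking inverse Laplace transforms yields \eqref{newEtM00}.

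I expect the main obstacle to be the bookkeeping in the third paragraph: correctly identifying the auxiliary potential $\tilde u_{lm}$ of the \emph{scattering} field with the exterior radial factor $k_l(sr/c)$ of \eqref{solutionLap}, and tracking the $\hat\partial_r=\frac{d}{dr}+\frac1r$ normalisation so that the radial-to-tangential ratio emerges with the correct denominator $k_l(z)+z\,k_l'(z)$. Once that ratio is pinned down the remaining algebra is a one-line cancellation, and the $\vt$-component requires no work at all.
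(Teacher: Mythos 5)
Your proposal is correct and follows essentially the same route as the paper: both reduce the claim to the scalar identity $\rho_l\ast\psi^{(1)}_{lm}=\beta_l^{-1}\,\omega_l\ast\psi^{r}_{lm}$, obtain the ratio $\breve\psi^{r}_{lm}/\breve\psi^{(1)}_{lm}=\beta_l k_l(z)/(k_l(z)+z k_l'(z))$ at $r=b$ from the single-potential (divergence-free) structure of the exterior solution in \eqref{solutionLap}, and close the argument by the same algebraic cancellation in the Laplace domain, with the pole expansion of $\omega_l$ read off from Proposition \ref{sigmalthm}. The only cosmetic difference is that the paper multiplies \eqref{conffs} by $(1+z)k_l(z)+zk_l'(z)$ rather than inverting the ratio, which is the identical computation.
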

\begin{proof}
 In view of  Proposition \ref{EFsinsol},
we can reformulate  \eqref{divfreevshexp} as
	\begin{equation}\label{divfreevshexp00}
	\breve{\bs E}^{\rm e}=\breve u_{00}\bs Y_0^0+\sum_{l=1}^\infty\sum_{|m|=0}^l\Big\{\breve{u}_{lm} \bs\Phi_l^m+\frac {\beta_l} r \breve{v}_{lm}  \,  \bs Y_l^m
	+\hat \partial_r \breve{v}_{lm}  \,\bs\Psi_l^m \Big\},
	\end{equation}
	which is a solution of the exterior problem \eqref{tranformms}.
	Let $\big\{\breve \psi^r_{lm}, {\breve \psi}^{(1)}_{lm},{\breve \psi}^{(2)}_{lm}\big\}$  be the Laplace transforms of $\big\{{\psi^r_{lm}, \psi}^{(1)}_{lm},{\psi}^{(2)}_{lm}\big\}$, which is the VSH expansion coefficients of the scattering field $\bs E^{\rm sc}$ in \eqref{vshexpscatfield}.
        Note that $\breve{\bs E}^{\rm e}=\breve {\bs E}^{\rm sc}$ is the solution of the exterior problem \eqref{tranformms} 
         with boundary data $\breve{\bs\lambda}=\breve{\bs E}^{\rm sc}\times\hat{\bs r}$ on the artificial boundary $r=b$. Then by \eqref{vshexpscatfield},  \eqref{solutionLap} and \eqref{divfreevshexp00}, we arrive at the VSH expansion coefficients of Laplace transformed scattering field $\breve{\bs E}^{\rm sc}:$
\begin{equation}\label{eq: psir1}
\breve{\psi}^{r}_{lm}=\frac{\beta_l}{r}\breve{v}_{lm}=\frac{\beta_l} r \frac{ k_l(sr/c)}{\hat{\partial}_rk_l(sb/c)}\breve{\lambda}_{lm}^{(2)},\quad\breve \psi^{(1)}_{lm}=\hat{\partial}_r\breve{v}_{lm} =\frac{\hat{\partial}_rk_l(sr/c)}{\hat{\partial}_rk_l(sb/c)}\breve{\lambda}_{lm}^{(2)},
\end{equation}
for $r\geq b$.  This implies
\begin{equation*}\label{identicoef}
\breve \psi^r_{lm}= \frac{\beta_l} r  \frac{ k_l(sr/c)}{\hat{\partial}_rk_l(sr/c)}  \breve \psi^{(1)}_{lm}
= \frac{\beta_l} r  \frac{ k_l(sr/c)}{ \frac s c k_l'(sr/c)+\frac 1  r  k_l(sr/c)}  \breve \psi^{(1)}_{lm},
\end{equation*}
so at $r=b,$ we have 
\begin{equation}\label{conffs}
\frac{\breve \psi^r_{lm}}{\beta_l}=\frac{ k_l(z)}{k_l(z)+z\, k_l'(z)}
\breve \psi_{lm}^{(1)},\quad z=\frac{sb}{c}.
\end{equation}
Multiplying both sides of \eqref{conffs} by $(1+z)k_l(z)+zk_l'(z)$ yields
\begin{equation}\label{zeqnation}
z\,\bigg(\frac{z\, k_l(z)}{k_l(z)+z\, k_l'(z)}+1\bigg) \breve \psi_{lm}^{(1)}=z\,\bigg(1+z+z
\frac{k_l'(z)}{k_l(z)}\bigg)\frac{\breve \psi^r_{lm}}{\beta_l}.
\end{equation}
In view of the definitions of $\rho_l$ and $\sigma_l$ in \eqref{arhol} and using the property \eqref{invLapf}, we take the  inverse Laplace transform on both sides of  \eqref{zeqnation} and find
\begin{equation}\label{kernelrelation}
\rho_l\ast\psi_{lm}^{(1)}=\frac{\omega_l\ast {\psi}_{lm}^{r}}{\beta_l},
\end{equation}
where  the kernel function 
\begin{equation}\label{kernel}
\begin{split}
\omega_l(t)&={\mathscr L}^{-1}\bigg[z\,\bigg(1+z+z
\frac{k_l'(z)}{k_l(z)}\bigg)\bigg](t)=\frac b c\big(\sigma_l'(t)+\sigma_l(0)\delta(t)\big). 
\end{split}
\end{equation}
Then, substituting \eqref{kernelrelation} into \eqref{oldEtM} leads to   the capacity operator $\mathscr{T}_b[\bs E^{sc}]$ in
\eqref{newEtM00}.

Finally, the last formula in \eqref{kernelomega00} can be obtained from  \eqref{sigmalthm} directly.
\end{proof}


\section{Simulation of three-dimensional dispersive invisibility cloak} \label{3dcloak}
\setcounter{equation}{0}
As already mentioned in the introductory section, the invisibility cloak is one of the most appealing examples in the field of  transformation optics \cite{pendry.2006, green2003anis}. In this section, we focus on the time-domain modelling and efficient simulation of the electromagentic invisibility cloak first proposed in  \cite{pendry.2006}.  Indeed, there exist very limited works in three-dimensional cloak simulations. Our contributions are twofold.  (i) We shall derive a new mathematical formulation of the time-domain dispersive cloak.  Different from the existing models  based on some mixed forms of both $\bs E$ and $\bs H$ (see, e.g.,  \cite{Hao2008fdtd, zhao2009full, li2012developing, Li2014wellpose}), the new formulation  only involves one unknown field $\bs D$ (see Theorem \ref{governingeqincloak}), where the seemingly  complicated temporal convolutions in the form of \eqref{ftzasy} can be evaluated efficiently as shown in \eqref{fastconvformula}. Moreover, the proposed governing equation in the cloaking layer with special dispersive media is valid for other geometries (e.g., the polygonal layer) other than the spherical shell.
(ii) To simulate the time-domain spherical cloaking designed in \cite{pendry.2006},   we shall develop a very efficient VSH-spectral-element method for solving the reduced  problem truncated by the NRBC \eqref{nrbconscatteringfield} using  the alternative formulation of the  capacity operator in Theorem \ref{newthmA0}. The implementation of the new algorithm can run thousands of time steps in a few hours on a desktop with intel i7 CPU,  while the parallel implementation of the classic FDTD method running on a cluster with 100 processors and 220 GB memory takes 45 hours for 13000 time steps (cf. \cite[pp. 7307]{zhao2009full}). 

\subsection{Dispersive modelling of 3D invisibility  cloaks}
\setcounter{equation}{0}
\setcounter{lmm}{0}
\setcounter{thm}{0}



  The key to the design of invisibility cloak is to fill the cloaking layer, denoted by  $\Omega_{\rm cl}$ (see, e.g., $\Omega_1$ in  Figure \ref{cloakregion}),  with specially designed metamaterials, which can steer electromagnetic waves from penetrating into the enclosed region, and thereby render the interior ``invisible'' to the outside observer.  According to  the pioneering work by Pendry et al. \cite{pendry.2006},  the cloaking parameters disperse with frequency and therefore can only be fully effective at a single frequency. To investigate this interesting phenomena, it is necessary to simulate the full wave and consider  
the non-monochromatic waves passing through  such frequency-dependent materials in time domain.


The central issue for time-domain modelling is to formulate  the {\em constitutive relations}.
The material parameters of an ideal spherical cloak are given by (cf. \cite{pendry.2006}):
\begin{equation}\label{materials0}
{\bs \varepsilon}={\bs \mu}={\rm diag}\big(\varepsilon(r), \epsilon, \epsilon\big),\quad
\varepsilon(r)=\frac{R_2}{R_2-R_1}\Big(\frac{r-R_1}{r}\Big)^2,\quad \epsilon=\frac{R_2}{R_2-R_1},
\end{equation}
in the cloaking layer  $\Omega_{\rm cl}=\{R_1<r<R_2\}.$ 
Since $\varepsilon(r) \in [0,1)$, same as in the left-handed materials (LHMs), the material parameters are often mapped by dispersive medium models, e.g., Drude model, Lorentz model \cite{Hao2008fdtd}. Here, we map $\varepsilon(r)$ (to frequency $\omega$-dependent medium)  via the Drude dispersion model:
\begin{equation}\label{drudei}
\varepsilon_k(r,\omega)=1-\frac{\omega_{p,k}^2(r)}{\omega(\omega-\ri \gamma_k)}, \quad \omega_{p,k}(r)
=\sqrt{\omega_c(\omega_c-\ri \gamma_k)(1-\varepsilon(r))},
\end{equation}
for $k=1,2,$ leading  to  the dispersive media in $\Omega_{\rm cl}:$
\begin{equation}\label{DrudeE}
\hat{\bs \varepsilon}(r,\omega)={\rm diag} \big(\varepsilon_1(r, \omega), \epsilon,\epsilon \big),\quad  \hat{\bs \mu}(r,\omega)={\rm diag}\big(\varepsilon_2(r,\omega), \epsilon,\epsilon \big).
\end{equation}
In the above expressions,   $\omega$ is the wave frequency, $\{\omega_{p,k}\}$ are the plasma frequencies, $\{\gamma_k\}$ are damping terms called collision frequencies and $\omega_c>0$ is the operating frequency of the cloak. Indeed,  if $\omega=\omega_c,$  then \eqref{DrudeE} reduces to \eqref{materials0}. Although the ideal lossless case, i.e., $\gamma_1=\gamma_2=0$ has been adopted in \cite{zhao2008full, zhao2009full,li2012developing}, it is physically more reasonable to include the loss effect of the medium in the modeling. Hereafter, we assume that  $\gamma_1\neq 0, \gamma_2\neq 0$.

Denote  by $\hat f$ the Fourier transform of a generic function $f(t)$, i.e.,
\begin{equation*}
\hat f(\omega)=\mathscr{F}[f(t)](\omega)=\int_{-\infty}^{+\infty}f(t)e^{-\ri \omega t}dt.
\end{equation*}
Let  ${\bs v}=(v_r,v_{\theta}, v_{\phi})^t$ be a generic vector field, where $v_r$, $v_{\theta}$ and $v_{\phi}$ are the components of
${\bs v}$ in the coordinate units  ${\bs e}_r$, ${\bs e}_{\theta}$ and ${\bs e}_{\phi}$, respectively.

Given \eqref{DrudeE}, the constitutive relation in the cloaking layer $\Omega_{\rm cl}$ in the frequency domain reads
\begin{align}
&\widehat {\bs D}=\big(\widehat D_r,\widehat D_{\theta}, \widehat D_{\phi}\big)^t=\varepsilon_0 \hat{\bs \varepsilon}(r,\omega)\widehat {\bs E}=\varepsilon_0\big(\varepsilon_1(r,\omega) \widehat E_r,\epsilon \widehat E_{\theta}, \epsilon \widehat E_{\phi}\big)^t, \label{Const1}\\
&\widehat {\bs B}=\big(\widehat B_r,\widehat B_{\theta}, \widehat B_{\phi}\big)^t=\mu_0 \hat{\bs \mu}(r,\omega)\widehat {\bs H}=\mu_0\big(\varepsilon_2(r,\omega) \widehat H_r,\epsilon \widehat H_{\theta}, \epsilon \widehat H_{\phi}\big)^t. \label{Const2}
\end{align}
\begin{rem}\label{Rmk:cond} {The time-domain constitutive equations extensively used in \cite{zhao2008full, zhao2009full,li2007error, Li2014wellpose} reads
		\begin{align}
		&\frac{\partial^2 D_r}{\partial t^2}+\gamma_1\frac{\partial D_r}{\partial t}=\varepsilon_0\Big(\frac{\partial^2 E_r}{\partial t^2}+\gamma_1\frac{\partial E_r}{\partial t}+\omega_{p,1}^2E_r\Big), \quad (D_{\theta}, D_{\phi})=\varepsilon_0 \epsilon (E_{\theta}, E_{\phi}), \label{AlterConst1}\\
		&\frac{\partial^2 B_r}{\partial t^2}+\gamma_2\frac{\partial B_r}{\partial t}=\mu_0\Big(\frac{\partial^2 H_r}{\partial t^2}+\gamma_2\frac{\partial H_r}{\partial t}+\omega_{p,2}^2H_r\Big), \quad (B_{\theta}, B_{\phi})=\mu_0 \epsilon (H_{\theta}, H_{\phi}), \label{AlterConst2}
		\end{align}
		which can be obtained by simply applying the inverse Fourier transform to \eqref{Const1}-\eqref{Const2}.
		\qed}
\end{rem}

Different from the existing models, we use \eqref{Const1}-\eqref{Const2} to  derive the following relations in  time domain.

\begin{lemma}\label{prop1} We have the constitutive relations in time domain of the form
	\begin{align}
	&{\bs E}=\varepsilon_0^{-1}\mathscr{D}_1[{\bs D}],\quad {\bs H}=\mu_0^{-1}\mathscr{D}_2[{\bs B}], \label{Constfinal}
	\end{align}
	where for $k=1,2,$ the operators
	\begin{equation}\label{DispOperator1}
	\mathscr{D}_k[\bs D]:=\Big(D_r+\int_{0}^{t} \vartheta_k(r, t-\tau)D_r(\cdot,\tau)d\tau,\, \epsilon^{-1}D_{\theta},\, \epsilon^{-1}D_{\phi}\Big)^t,
	\end{equation}
	with kernel functions given by
	\begin{equation}\label{kernD}
	\vartheta_k(r, t)=\frac{\ri\,\omega_{p,k}^2(r)}{\zeta^0_k(r)-\zeta^1_k(r)}\big(e^{\ri \zeta^0_k(r) t}-e^{\ri \zeta^1_k(r) t}  \big).
	\end{equation}
	Here,  $\{\zeta^0_k(r), \zeta^1_k(r)\}_{k=1}^2$ are the roots of  the quadratic equation: $z^2-\ri\gamma_kz-\omega_{p,k}^2=0$ given by
	\begin{equation}\label{solformu}
	\begin{split}
	& \zeta_k^0(r)=-\frac 1{\sqrt 2} \sqrt{\sqrt{\xi_k^2+\eta_k^2} + {\xi_k} }+\ri \bigg(\frac{\gamma_k} 2 +\frac 1{\sqrt 2} \sqrt{  \sqrt{\xi_k^2+\eta_k^2} -{\xi_k} }\bigg),\\
	& \zeta_k^1(r)=\frac 1{\sqrt 2} \sqrt{\sqrt{\xi_k^2+\eta_k^2} + {\xi_k} }+\ri \bigg(\frac{\gamma_k} 2 -\frac 1{\sqrt 2} \sqrt{  \sqrt{\xi_k^2+\eta_k^2} -{\xi_k} }\bigg),
	\end{split}
	\end{equation}
	where
	\begin{equation}\label{notereal}
	\xi_k= \omega_c^2\,(1-\varepsilon(r))-\frac {\gamma^2_k} 4,\quad \eta_k=-\gamma_k\omega_c\,(1-\varepsilon(r)),
	\end{equation}
	are the real and imaginary parts of $\omega_{p,k}^2-\frac {\gamma^2_k} 4$  for $k=1,2.$
\end{lemma}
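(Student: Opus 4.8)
The plan is to invert the frequency-domain constitutive relations \eqref{Const1}--\eqref{Const2} componentwise and then return to the time domain through the inverse Fourier transform. Because the tangential coefficient $\epsilon$ in \eqref{materials0} is a nondispersive constant, the $\theta$- and $\phi$-components are immediate: dividing the corresponding scalar equations in \eqref{Const1} by $\varepsilon_0\epsilon$ gives $\widehat E_\theta=\varepsilon_0^{-1}\epsilon^{-1}\widehat D_\theta$ and $\widehat E_\phi=\varepsilon_0^{-1}\epsilon^{-1}\widehat D_\phi$, which transform back to the algebraic relations $\epsilon^{-1}D_\theta,\ \epsilon^{-1}D_\phi$ recorded in \eqref{DispOperator1}; the magnetic case is identical. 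Thus all the genuine work is concentrated in the radial component.

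For the radial part I would first rewrite the reciprocal of the Drude coefficient \eqref{drudei} in a partial-fraction-ready form,
\begin{equation*}
\frac{1}{\varepsilon_1(r,\omega)}=\frac{\omega(\omega-\ri\gamma_1)}{\omega(\omega-\ri\gamma_1)-\omega_{p,1}^2}=1+\frac{\omega_{p,1}^2}{\omega^2-\ri\gamma_1\omega-\omega_{p,1}^2},
\end{equation*}
so that $\widehat E_r=\varepsilon_0^{-1}\widehat D_r+\varepsilon_0^{-1}\frac{\omega_{p,1}^2}{\omega^2-\ri\gamma_1\omega-\omega_{p,1}^2}\widehat D_r$. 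The denominator is exactly $z^2-\ri\gamma_1 z-\omega_{p,1}^2$ evaluated at $z=\omega$, hence it factors as $(\omega-\zeta_1^0)(\omega-\zeta_1^1)$, where by Vieta $\zeta_1^0+\zeta_1^1=\ri\gamma_1$ and $\zeta_1^0\zeta_1^1=-\omega_{p,1}^2$. A partial-fraction split then gives
\begin{equation*}
\frac{\omega_{p,1}^2}{(\omega-\zeta_1^0)(\omega-\zeta_1^1)}=\frac{\omega_{p,1}^2}{\zeta_1^0-\zeta_1^1}\Big(\frac{1}{\omega-\zeta_1^0}-\frac{1}{\omega-\zeta_1^1}\Big).
\end{equation*}
Using the elementary causal transform pair $\mathscr{F}^{-1}\big[(\omega-\zeta)^{-1}\big](t)=\ri\,e^{\ri\zeta t}$ for $t>0$ (and $0$ for $t<0$), valid whenever $\Im\zeta>0$, identifies the bracketed expression as the Fourier transform of the kernel $\vartheta_1(r,t)$ in \eqref{kernD}; the convolution theorem then turns the product with $\widehat D_r$ into the temporal convolution of \eqref{DispOperator1}, while the leading $1$ reproduces the undifferentiated $D_r$. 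The lower limit $0$ there comes from causality of $\vartheta_1$ together with that of the fields. Repeating the argument with $k=2$ yields $\bs H=\mu_0^{-1}\mathscr{D}_2[\bs B]$, completing \eqref{Constfinal}.

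To justify the closed-form roots \eqref{solformu} I would complete the square in $z^2-\ri\gamma_k z-\omega_{p,k}^2=0$, obtaining $\big(z-\ri\gamma_k/2\big)^2=\omega_{p,k}^2-\gamma_k^2/4=\xi_k+\ri\eta_k$ with $\xi_k,\eta_k$ precisely as in \eqref{notereal}; the standard complex-square-root formula, together with $\eta_k<0$ (since $\gamma_k>0$, $\omega_c>0$ and $\varepsilon(r)<1$), produces the stated real and imaginary parts of $\zeta_k^0$ and $\zeta_k^1$.

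The crux is the hypothesis $\Im\zeta_k^{0},\,\Im\zeta_k^{1}>0$ that legitimizes the causal transform pair above and makes the convolution run over $[0,t]$ rather than the whole line. For $\zeta_k^0$ the imaginary part $\gamma_k/2+\tfrac{1}{\sqrt2}\sqrt{\sqrt{\xi_k^2+\eta_k^2}-\xi_k}$ is manifestly positive; the delicate case is $\zeta_k^1$, where one must show $\gamma_k/2>\tfrac{1}{\sqrt2}\sqrt{\sqrt{\xi_k^2+\eta_k^2}-\xi_k}$. Squaring reduces this to $\sqrt{\xi_k^2+\eta_k^2}<\xi_k+\gamma_k^2/2=\omega_c^2(1-\varepsilon(r))+\gamma_k^2/4$, and a short computation (squaring once more and using \eqref{notereal}) collapses this to the clean inequality $\omega_c^2(1-\varepsilon(r))^2<\omega_c^2(1-\varepsilon(r))$, i.e.\ $\varepsilon(r)>0$. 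Since $\varepsilon(r)\in(0,1)$ strictly inside the cloaking layer and $\gamma_k\neq0$ by assumption, both roots lie in the open upper half-plane, which is exactly what the inverse-transform step needed.
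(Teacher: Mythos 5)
Your proposal is correct and follows essentially the same route as the paper: invert the Drude coefficient as $1+\omega_{p,k}^2/(\omega^2-\ri\gamma_k\omega-\omega_{p,k}^2)$, split by partial fractions via Vieta's relations, apply the causal transform pair valid for $\Im\zeta>0$, and verify the upper-half-plane location of the roots by completing the square and reducing the inequality for $\Im\zeta_k^1$ to $\varepsilon(r)(1-\varepsilon(r))>0$. The only cosmetic difference is that you phrase the final inequality as $\omega_c^2(1-\varepsilon)^2<\omega_c^2(1-\varepsilon)$ while the paper writes the equivalent $\gamma_k^4+4\gamma_k^2\xi_k-4\eta_k^2=4\gamma_k^2\omega_c^2\varepsilon(r)(1-\varepsilon(r))>0$.
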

\begin{proof}
	Using  the definition of $\varepsilon_k(r,\omega)$ in \eqref{DrudeE}, we derive from \eqref{Const1}-\eqref{Const2}  that
	\begin{align}
	& \widehat E_r=\varepsilon_0^{-1} \Big(1+\frac{\omega_{p,1}^2(r)}{\omega^2-\ri\gamma_1\omega-\omega_{p,1}^2(r) }\Big) \widehat D_r,\quad (\widehat E_{\theta},\widehat E_{\phi})^t=(\varepsilon_0 \epsilon)^{-1}(\widehat D_{\theta},\widehat D_{\phi})^t\;, \label{Const3}\\
	&\widehat H_r=\mu_0^{-1}\Big(1+\frac{\omega_{p,2}^2(r)}{ \omega^2-\ri\gamma_2\omega-\omega_{p,2}^2(r) } \Big)\widehat B_r ,\quad (\widehat H_{\theta},\widehat H_{\phi})^t=(\mu_0 \epsilon)^{-1}(\widehat B_{\theta},\widehat B_{\phi})^t.\;  \label{Const4}
	\end{align}
	Applying the inverse Fourier transform to \eqref{Const3}-\eqref{Const4} leads to
	\begin{equation}\label{InvforA}
	\begin{split}
	&E_r=\varepsilon_0^{-1}D_r+\varepsilon_0^{-1}\omega_{p,1}^2(r)\mathscr{F}^{-1}\Big[\frac{1}{\omega^2-\ri\gamma_1\omega-\omega_{p,1}^2(r)}\Big]\ast D_r,\;\; 
	\\
	&H_r=\mu_0^{-1}B_r+\mu_0^{-1}\omega_{p,2}^2(r)\mathscr{F}^{-1}\Big[\frac{1}{\omega^2-\ri\gamma_2\omega-\omega_{p,2}^2(r)}\Big]\ast B_r, \\[3pt]
	&(E_{\theta}, E_{\phi})^t=(\varepsilon_0 \epsilon)^{-1}(D_{\theta}, D_{\phi})^t,\quad  (H_{\theta}, H_{\phi})^t=(\mu_0 \epsilon)^{-1}(B_{\theta}, B_{\phi})^t,
	\end{split}
	\end{equation}
	where ``\ $\ast$\," is  the usual convolution as before.

	The rest of the derivation is to explicitly evaluate two inverse Fourier transforms.   Let $\zeta^0_k, \zeta^1_k$ be two roots of  $z^2-\ri\gamma_kz-\omega_{p,k}^2=0.$ Then
	we immediately have $\zeta_k^0+\zeta_k^1=\ri \gamma_k$ and $\zeta_k^0\zeta_k^1=-\omega_{p,k}^2,$ so   we can write
	\begin{equation}\label{splA}
	\frac{1}{\omega^2-\ri\gamma_k\omega-\omega_{p,k}^2(r)}=\frac{1}{\zeta_k^0-\zeta_k^1}\Big(\frac{1}{\omega-\zeta^0_k}-\frac{1}{\omega-\zeta^1_k}\Big).
	\end{equation}
	Recall  that (cf. \cite{arfken1999mathematical}):
	\begin{equation}
	\label{fourierformula}
	\mathscr{F}^{-1}\Big[\frac{1}{\ri\omega+a}\Big]=-\ri\,	\mathscr{F}^{-1}\Big[\frac{1}{\omega-a\ri}\Big]=e^{-at}H(t), \quad \hbox{if}\;\;\mathfrak{R}\{a\}>0,
	\end{equation}
	where $H(t)$ is the Heaviside function.
	Suppose that we can show
	\begin{equation}\label{InvFormula0}	
	\mathfrak{Im}\{\zeta^0_k\}>0,\quad \mathfrak{Im}\{\zeta^1_k\}>0.
	\end{equation}  	
	Then by \eqref{splA}-\eqref{fourierformula},
	\begin{equation}\label{InvFormula1}
	\mathscr{F}^{-1}\Big[\frac{1}{\omega^2-\ri\gamma_k\omega-\omega_{p,k}^2(r)}\Big]=\frac{\ri}{\zeta^0_k-\zeta^1_k}\big(e^{\ri \zeta^0_k t}-e^{\ri \zeta^1_k t}   \big)H(t).
	\end{equation}
	Consequently, we derive \eqref{DispOperator1}-\eqref{kernD} from  \eqref{InvforA} and \eqref{InvFormula1}.
	
	It remains to verify  \eqref{solformu} and  \eqref{InvFormula0}. It is evident that the quadratic equation has the roots:
	\begin{equation}\label{exactsolu}
	z=\frac{\gamma_k} 2 \ri  \pm \sqrt{\omega_{p,k}^2-\frac{\gamma_k^2} 4}=\frac{\gamma_k} 2 \ri\pm \sqrt{\xi_k+ \ri  \eta_k}\,.
	\end{equation}
	Setting  $\alpha_k+\ri \beta_k=\sqrt{\xi_k+ \ri  \eta_k}$,   we find
	$\alpha_k^2-\beta_k^2=\xi_k$ and $2\alpha_k\beta_k=\eta_k.$ Solving this system yields
	\begin{equation}\label{alphabetak}
	\alpha^2_k=\frac{\sqrt{\xi_k^2+\eta_k^2}+\xi_k}{2},\quad 	\beta^2_k=\frac{\sqrt{\xi_k^2+\eta_k^2}-\xi_k}{2}.
	\end{equation}
	Noting that $\alpha_k\beta_k<0, $ we can determine $\alpha_k,\beta_k,$ and  obtain \eqref{solformu} from
	\eqref{exactsolu}.   By  \eqref{solformu},  $\mathfrak{Im}\{\zeta^0_k\}>\mathfrak{Im}\{\zeta^1_k\},$ so  we next show that
	$\mathfrak{Im}\{\zeta^1_k\}>0,$ that is,
	$$
	\frac{\gamma_k} 2 >\frac 1{\sqrt 2} \sqrt{  \sqrt{\xi_k^2+\eta_k^2} -{\xi_k} }\;\;\;  {\rm i.e.,}\;\;\;   \gamma_k^4+4\gamma_k^2 \xi_k-4\eta_k^2>0.
	$$
	Direct calculation from \eqref{notereal} leads to
	$$
	\gamma_k^4+4\gamma_k^2 \xi_k-4\eta_k^2=4\gamma_k^2 \omega_c^2\, \varepsilon(r)\, (1-\varepsilon(r))>0,
	$$
	as $\gamma_k\neq 0$, $\omega_c>0$ and $0<\varepsilon(r)<1$ (cf. \eqref{materials0}).  This verifies \eqref{InvFormula0}	 and completes the proof.
\end{proof}

 With the constitutive relations \eqref{Constfinal}-\eqref{DispOperator1} at our disposal, we represent $\bs E,\bs H$ in terms of $\bs D, \bs B$ and then eliminate $\bs B$, leading to  the following equation in $\Omega_{\rm cl}.$
\begin{theorem}\label{governingeqincloak}
	Assume that the source term and initial fields vanish in the cloaking layer $\Omega_{\rm cl}$. Then the governing equation in the cloaking layer  takes the form
	\begin{equation}\label{secondordergovern}
	\partial^2_{t}{\bs D}+c^2 \nabla \times \big( \mathscr{D}_2[ \nabla \times (\mathscr{D}_1[{\bs D}])] \big)=\bs 0\quad {\rm in}\;\; \Omega_{\rm cl}.
	\end{equation}
\end{theorem}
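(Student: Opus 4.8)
The plan is to eliminate the auxiliary fields $\bs E$, $\bs H$ and $\bs B$ from the source-free Maxwell system in $\Omega_{\rm cl}$, keeping only $\bs D$, by combining the two curl equations with the constitutive relations of Lemma \ref{prop1}. Since the source term is assumed to vanish in the cloaking layer, the system \eqref{reducedsystemeq} reduces there to $\partial_t \bs D = \nabla \times \bs H$ and $\partial_t \bs B = -\nabla \times \bs E$. First I would substitute the constitutive relation $\bs H = \mu_0^{-1}\mathscr{D}_2[\bs B]$ from \eqref{Constfinal} into the first equation to obtain $\partial_t \bs D = \mu_0^{-1}\nabla \times \mathscr{D}_2[\bs B]$.

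Next I would differentiate this identity once more in $t$. Commuting $\partial_t$ through the spatial curl (immediate, as the operators act on different variables) and through the temporal operator $\mathscr{D}_2$ (justified below) yields $\partial_t^2 \bs D = \mu_0^{-1}\nabla \times \mathscr{D}_2[\partial_t \bs B]$. Then I would insert the second Maxwell equation $\partial_t \bs B = -\nabla \times \bs E$ together with $\bs E = \varepsilon_0^{-1}\mathscr{D}_1[\bs D]$, which gives $\partial_t^2 \bs D = -(\varepsilon_0\mu_0)^{-1}\nabla \times \mathscr{D}_2[\nabla \times \mathscr{D}_1[\bs D]]$. Recalling the definition $c = 1/\sqrt{\varepsilon_0\mu_0}$ and rearranging then produces exactly \eqref{secondordergovern}.

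The only step requiring care --- and the main obstacle --- is the commutation $\partial_t \mathscr{D}_2 = \mathscr{D}_2 \partial_t$. From the definition \eqref{DispOperator1}, the tangential components of $\mathscr{D}_2$ are mere multiplications by $\epsilon^{-1}$ and commute with $\partial_t$ trivially; the radial component involves the causal convolution $\vartheta_2(r,\cdot)\ast B_r$ with kernel \eqref{kernD}. For this I would invoke the standard differentiation rule $\partial_t(\vartheta_2\ast B_r) = \vartheta_2\ast \partial_t B_r + \vartheta_2(r,t)\,B_r(\cdot,0)$, the time-domain analogue of the Laplace identity \eqref{invLapf} already used in the paper. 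Since the initial fields are assumed to vanish in $\Omega_{\rm cl}$, we have $B_r(\cdot,0)=0$, so the boundary term drops and $\partial_t(\mathscr{D}_2[\bs B]) = \mathscr{D}_2[\partial_t \bs B]$ holds exactly. I would also note that this same vanishing initial data is what legitimizes treating all the convolutions as causal with lower limit $0$ throughout, so that no initial-layer corrections enter \eqref{secondordergovern}.
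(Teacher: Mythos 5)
Your proposal is correct and follows essentially the same route as the paper: eliminate $\bs E$, $\bs H$, $\bs B$ via the constitutive relations of Lemma \ref{prop1}, with the key step being the commutation $\partial_t\mathscr{D}_2[\bs B]=\mathscr{D}_2[\partial_t\bs B]$ justified by the vanishing initial data (the paper establishes this by integrating the convolution by parts, which is the same content as your rule $\partial_t(\vartheta_2\ast B_r)=\vartheta_2\ast\partial_t B_r+\vartheta_2(r,t)B_r(\cdot,0)$ with $B_r(\cdot,0)=\bs 0$). The only cosmetic difference is that you substitute $\bs H=\mu_0^{-1}\mathscr{D}_2[\bs B]$ before differentiating in $t$ while the paper differentiates the constitutive relation first; the two orderings are trivially equivalent.
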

\begin{proof}
	First, we show that given the homogeneous initial condition $\bs B(\bs r, 0)=\bs 0$, we have  $\partial_t \mathscr{D}_2[\bs B]=\mathscr{D}_2[\partial_t \bs B]$, that is, the operators $\partial_t$ and $\mathscr{D}_2$ are commutable.
	Indeed, by
	\begin{align*}
	\int_{0}^{t} \frac{\partial}{\partial t}\vartheta_2(r,t-\tau)  B_r(\bs r,\tau)d\tau
	=&-\vartheta_2( r,t-\tau)B_r(\bs r,\tau)\big|_0^t+\int_{0}^{t} \vartheta_2(r,t-\tau) \frac{\partial B_r(\bs r,\tau)}{\partial \tau}  d\tau\\
	=&\int_{0}^{t} \vartheta_2(r,t-\tau) \frac{\partial B_r(\bs r,\tau)}{\partial \tau}  d\tau,
	\end{align*}
	and \eqref{Constfinal}-\eqref{DispOperator1}, we verify that $\partial_t \mathscr{D}_2[\bs B]=\mathscr{D}_2[\partial_t \bs B]$.
	Thus, taking time derivative on both sides of the second equation in \eqref{Constfinal}, we obtain
	\begin{equation}\label{pHpt}
	\partial_t {\bs H}=\mu_0^{-1} {\partial_t(\mathscr{D}_2[\bs B])}=\mu_0^{-1} {\mathscr{D}_2[\partial_t \bs B]}.
	\end{equation}
	By substituting the constitutive relation \eqref{Constfinal} in the second equation in \eqref{reducedsystem1eq1}, we derive
	\begin{equation}\label{B2D}
	{\partial_t {\bs B}}=-\varepsilon_0^{-1}\nabla \times \big(\mathscr{D}_1[{\bs D}]\big).
	\end{equation}
	Then, taking time derivative on the first equation in \eqref{reducedsystem1eq1} and utilizing \eqref{pHpt}-\eqref{B2D} to eliminate ${\bs H}$ leads to \eqref{secondordergovern}, which ends the proof.
\end{proof}

\begin{rem}
	It is worthwhile to note that the mathematical model \eqref{sphericalcloakmodel} is not limited to spherical  dispersive cloaks. It is applicable to the modelling of many electromagnetic devices with symmetric non-diagonal $\bs \varepsilon$ and $\bs \mu$ made from metamaterials.
Following the procedure in \cite{okada2012fdtd}, we start with diagonalising the symmetric matrices $\bs\varepsilon$ and $\bs \mu,$ i.e.,
	\begin{equation}\label{constgeneral}
	\bs{\varepsilon}={\bs P} {\bs \Lambda_1} {\bs P}^t,\quad \bs \mu={\bs Q} {\bs \Lambda_2} {\bs Q}^t,\quad   {\bs \Lambda_i}={\rm diag}(\lambda_{i1}, \lambda_{i2},\lambda_{i3}),\quad i=1, 2,
	\end{equation}
	and $\{\bs P, \bs Q  \}=\{ P_{ij}, Q_{ij} \}_{1\leq i,j \leq 3} $ are orthonormal matrices. Then, we use the Drude model to map $\{\lambda_{ij}(\bs r)\}$ less than $1$  to $\{ \lambda_{ij}(\bs r, \omega) \}$ similar with \eqref{drudei} and take inverse Fourier transform to \eqref{constgeneral} with replaced $\{ \lambda_{ij}(\bs r, \omega) \}.$ As a result, we obtain the same constitutive relations  as  \eqref{Constfinal}
	$${\bs E}=\varepsilon_0^{-1}\mathscr{D}_1[{\bs D}],\quad {\bs H}=\mu_0^{-1}\mathscr{D}_2[{\bs B}]$$
	with more complicated forms of $\mathscr D_1$ and $\mathscr D_2$ :
	\begin{equation}\label{DispOperator1nondiagonal}
	\begin{split}
	\mathscr{D}_1[\bs D]:={\bs P} \widetilde{\bs \Lambda}^{-1}_1 {\bs P}^t \bs D+\int_0^t {\bs P} {\bs \Theta}_1(\bs r, t-\tau){\bs P}^t \bs D(\bs r, \tau)d\tau,\\
	\mathscr{D}_2[\bs B]:={\bs Q} \widetilde{{\bs \Lambda}}^{-1}_2 {\bs Q}^t \bs B+\int_0^t {\bs Q} {\bs \Theta}_2(\bs r, t-\tau) {\bs Q}^t \bs B(\bs r, \tau)d\tau,
	\end{split}
	\end{equation}
	where $\widetilde{\bs \Lambda}_i={\rm diag}\big(\tilde{\lambda}_{i1}(\bs r),\tilde{\lambda}_{i2}(\bs r),\tilde{\lambda}_{i3}(\bs r) \big)$, $\bs \Theta_i={\rm diag}(\vartheta_{i1}, \vartheta_{i2}, \vartheta_{i3})$ are diagonal matrices with
	\begin{equation*}
	\begin{split}
	& \tilde\lambda_{ij}=\begin{cases}
	\displaystyle 1& {\rm if}\;\; \lambda_{ij}(\bs r)\in (0, 1),\\
	\displaystyle \lambda_{ij}(\bs r) & {\rm if}\;\; \lambda_{ij}(\bs r)\in [1,\infty),
	\end{cases}\quad
	 \vartheta_{ij}=\begin{cases}
	\displaystyle \frac{\ri(\omega_{p,i}^j(\bs r))^2\big(e^{\ri \zeta^0_{ij} t}-e^{\ri \zeta^1_{ij} t}   \big)}{\zeta^0_{ij}-\zeta^1_{ij}}& {\rm if}\;\; \lambda_{ij}(\bs r)\in(0, 1),\\
	\displaystyle 0 & {\rm if}\;\; \lambda_{ij}(\bs r)\in [1,\infty),
	\end{cases}
	\end{split}
	\end{equation*}
	$\omega_{p,i}^j(\bs r)$ has a similar expression
	$$\omega_{p,i}^j(\bs r)	=\sqrt{\omega_c(\omega_c-\ri \gamma_i)(1-\lambda_{ij}(\bs r))},\quad i=1, 2,$$
	and complex pairs $\{\zeta^0_{ij}, \zeta^1_{ij}\}$ are the roots of quadratic equations $\omega^2-\ri\gamma_i\omega-(\omega_{p,i}^j(\bs r))^2=0$, $i=1, 2$, respectively.
	\qed
\end{rem}

\begin{wrapfigure}{r}{0.5\textwidth}
	\center
	\vspace{-2em}
	\includegraphics[scale=.3]{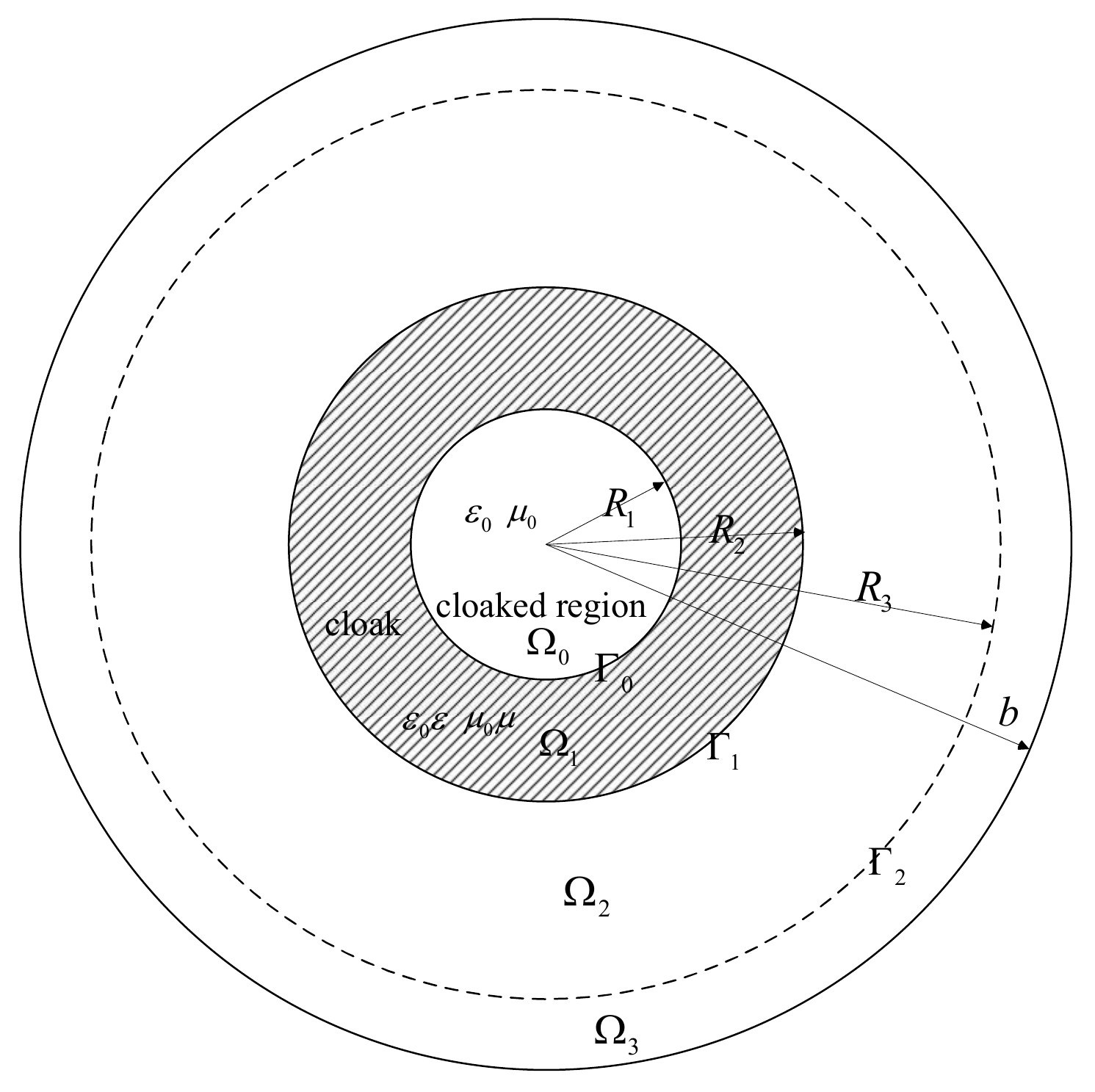}
	\vspace{-1em}
	\caption{\small \small Sketch of the cross section.}
	\label{cloakregion}
\end{wrapfigure}
\subsection{Simulation of the spherical invisibility cloaks}

In what follows, we focus on the  simulation of the spherical cloaks. We first present the full model with reduction of the unbounded domain by using the NRBC in Section \ref{sect::tbc}.  As sketched in Figure  \ref{cloakregion}, we denote
\begin{equation*}
\begin{split}
R_0=0;\;\; \Omega_i=\{R_i<r<R_{i+1}\},\;\; i=0,1; \\
\Omega_2=\Omega_{b_0}\setminus(\Omega_0\cup\Omega_1), \;\;  \Omega_3=\Omega_b\setminus \Omega_{b_0}.
\end{split}
\end{equation*}
Correspondingly, we further denote
$$\bs F(\bs r, t)=\partial_t\bs J(\bs r, t),$$
and
\begin{equation}
\begin{split}
&\Gamma_i=\bar \Omega_i \cap \bar \Omega_{i+1},\quad\{   \bs E^i, \bs H^i, \bs D^i, \bs B^i,\bs F^i \}=\{ \bs E, \bs H, \bs D,\bs B,\bs F  \}|_{\Omega_i}, \;\;\;i=0,1,2;\\[2pt]
&\{ \bs E^3, \bs H^3, \bs B^3, \bs D^3, \bs F^3\}=\{  \bs E^{\rm sc} , \bs H^{\rm sc}, \bs B^{\rm sc}, \bs D^{\rm sc}, \bs F  \}|_{\Omega_3}.
\end{split}
\end{equation}
We summarise below the  assumptions (for usual scattering problems):
\begin{itemize}
	\item[(i)] ${\bs \varepsilon}={\bs \mu}={\bs I}_3 \;\;{\rm in}\;\; \Omega_b \setminus \Omega_1$;
	\item[(ii)] There is no wave in the truncated domain $\Omega_b$ at time $t=0$, that is, we shall have homogeneous initial condition;
	\item[(iii)] The source term $\bs J$ is compactly supported in $\Omega_2.$
\end{itemize}


\begin{proposition}
	The full  model for 3D cloak takes the form
	\begin{subequations}\label{sphericalcloakmodel}
		\begin{numcases}{}
\partial_{t}^2 {\bs D}^i+c^2 \nabla \times  \nabla \times\bs D^i=\bs F^i \quad{\rm in}\;\; \Omega_i,\;\;  i=0, 2, 3, \label{HomoEq0}\\[4pt]
		\partial_{t}^2  {\bs D}^1+c^2 \nabla \times \big( \mathscr{D}_2[ \nabla \times (\mathscr{D}_1[{\bs D}^1])] \big)=\bs 0\quad {\rm in} \;\; \Omega_1,  \label{CloakingEq}\\[4pt]
		({\bs D^{0} -\mathscr{D}_1[{\bs D}^{1}}])\times  \hat {\bs r}=\bs 0,\;\;\big(\nabla\times\bs D^{0} -\mathscr{D}_2\big[\nabla\times\mathscr{D}_1[{\bs D}^{1}]\big]\big)\times  \hat {\bs r}=\bs 0\;\; {\rm on}\;\; \Gamma_0, \label{Interface}\\[4pt]
		(\mathscr{D}_1[{\bs D}^{1}]-\bs D^{2})\times  \hat {\bs r}=\bs 0,\;\;\big(\mathscr{D}_2\big[\nabla\times\mathscr{D}_1[{\bs D}^{1}]\big]-\nabla\times\bs D^{2}\big)\times  \hat {\bs r}=\bs 0\;\; {\rm on}\;\; \Gamma_1, \label{Interface1}\\[4pt]
		({\bs D}^{2}-\bs D^{3})\times \hat {\bs r}=\bs D^{\rm in}\times \hat {\bs r},\;\;\nabla\times({\bs D}^{2}-\bs D^{3})\times  \hat {\bs r}=\nabla\times\bs D^{\rm in}\times \hat {\bs r}\;\; {\rm on}\;\; \Gamma_2, \label{Interface2}\\[4pt]
		\partial_t {\bs D}_T^3+c\big( \nabla \times {\bs D}^3  \big)\times \hat{\bs r}-\mathscr{T}_b[{\bs D}^3]=\bs 0\;\;\; {\rm at}\;\; r=b, \label{DtN}\\[4pt]
		{\bs D}(\bs r, 0)=\bs 0,\quad \partial_t {\bs D}(\bs r, 0)=\bs 0\;\;\; {\rm in}\;\; \Omega_b, \label{InitialEq}
		\end{numcases}
	\end{subequations}
	where $\bs D_T^{3}:= \hat {\bs r}\times  \bs D^{3}\times \hat{ \bs r}$
	is the tangential component of  $\bs D^{3}$ on the boundary $r=b$.
\end{proposition}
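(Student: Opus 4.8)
The plan is to treat this as an assembly result: in each subregion I eliminate $\bs E$, $\bs H$, $\bs B$ in favour of the single divergence-free field $\bs D$, and then re-express the standard tangential transmission conditions and the scattered-field NRBC of Theorem \ref{theo21} entirely in terms of $\bs D$. The backbone is a pair of substitution rules. In a vacuum region ($\bs\varepsilon=\bs\mu=\bs I_3$, by assumption (i)) we have $\bs E=\varepsilon_0^{-1}\bs D$ and, from Faraday's law in \eqref{reducedsystem1eq1} together with $c^2=1/(\varepsilon_0\mu_0)$, $\partial_t\bs H=\mu_0^{-1}\partial_t\bs B=-\mu_0^{-1}\nabla\times\bs E=-c^2\nabla\times\bs D$. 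In the cloaking layer the same computation with Lemma \ref{prop1} gives $\bs E=\varepsilon_0^{-1}\mathscr{D}_1[\bs D]$ and, using \eqref{B2D} and the commutativity of $\partial_t$ with $\mathscr{D}_2$ established in the proof of Theorem \ref{governingeqincloak}, $\partial_t\bs H=-c^2\mathscr{D}_2[\nabla\times\mathscr{D}_1[\bs D]]$.

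First I would derive the interior equations. In $\Omega_0,\Omega_2,\Omega_3$ I differentiate Amp\`ere's law in \eqref{reducedsystem1eq1} in time and insert $\partial_t\bs H=-c^2\nabla\times\bs D$, which yields $\partial_t^2\bs D+c^2\nabla\times\nabla\times\bs D=\partial_t\bs J=\bs F$, i.e.\ \eqref{HomoEq0}; in $\Omega_3$ the identical computation applies to the scattered field, consistent with \eqref{reducedsystem1eq2} and with $\bs F^3=\bs F$ and vanishing source. In $\Omega_1$ the governing equation \eqref{CloakingEq} is precisely the conclusion of Theorem \ref{governingeqincloak}, whose hypotheses (vanishing source and initial data in $\Omega_{\rm cl}$) are granted by assumptions (ii)--(iii).

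Next I would translate the interface conditions. The physical transmission conditions are continuity of the tangential traces of the total $\bs E$ and $\bs H$ across $\Gamma_0,\Gamma_1$, together with the incident-field jump \eqref{transcond1} across $\Gamma_2$. Applying the substitution rules and cancelling the common factor $\varepsilon_0^{-1}$ turns tangential continuity of $\bs E$ into the first relations in \eqref{Interface}--\eqref{Interface2}, the cloak side carrying the operator $\mathscr{D}_1$. For the magnetic conditions the substitution rules naturally produce statements about $\partial_t\bs H$, hence about $(\nabla\times\bs D)\times\hat{\bs r}$ on the vacuum side and $\mathscr{D}_2[\nabla\times\mathscr{D}_1[\bs D]]\times\hat{\bs r}$ on the cloak side; cancelling the common factor $-c^2$ produces the second relations in \eqref{Interface}--\eqref{Interface2}. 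The main obstacle is exactly this time-derivative mismatch: the stated conditions are phrased for $\nabla\times\bs D$ itself rather than for its time derivative. I would resolve it by invoking the homogeneous initial data \eqref{InitialEq}: since both sides of each magnetic trace identity vanish at $t=0$, continuity of their time derivatives for all $t>0$ is equivalent to continuity of the quantities themselves, so no spurious time integration survives.

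Finally, for the outer boundary I would substitute $\bs E^{\rm sc}=\varepsilon_0^{-1}\bs D^3$ and $\partial_t\bs H^{\rm sc}=-c^2\nabla\times\bs D^3$ into the scattered-field NRBC \eqref{reducedsystem1nrbc}. Using the linearity of $\mathscr{T}_b$ (legitimate since $\bs D^3=\varepsilon_0\bs E^{\rm sc}$ is again divergence-free) and the identity $\eta c^2=c/\varepsilon_0$, the common factor $\varepsilon_0^{-1}$ cancels and one obtains $\partial_t\bs D_T^3+c(\nabla\times\bs D^3)\times\hat{\bs r}-\mathscr{T}_b[\bs D^3]=\bs 0$, which is \eqref{DtN}. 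The initial conditions \eqref{InitialEq} then follow from assumption (ii): $\bs D(\cdot,0)=\varepsilon_0\bs E_0=\bs 0$, while $\partial_t\bs D(\cdot,0)=\nabla\times\bs H_0+\bs J(\cdot,0)=\bs 0$ as the fields and the source vanish at $t=0$. Collecting \eqref{HomoEq0}--\eqref{InitialEq} completes the proof.
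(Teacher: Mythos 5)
Your proposal is correct and follows essentially the same route as the paper: take the cloaking-layer equation from Theorem \ref{governingeqincloak}, derive the vacuum wave equation by time-differentiating Amp\`ere's law, and translate the tangential continuity of $\bs E$ and $\bs H$ together with the scattered-field NRBC into conditions on $\bs D$ via the constitutive relations and Faraday's law. The only cosmetic difference is that the paper routes the magnetic jump conditions through $\bs B$ (continuity of $\bs H_T$ gives $\bs B^0\times\hat{\bs r}=\mathscr{D}_2[\bs B^1]\times\hat{\bs r}$, then time-differentiates and applies Faraday's law) whereas you differentiate the $\bs H$-continuity directly; in either case the substitution $\partial_t\bs H=-c^2\nabla\times\bs D$ (resp.\ $-c^2\mathscr{D}_2[\nabla\times\mathscr{D}_1[\bs D]]$) lands exactly on the stated conditions, so the ``time-derivative mismatch'' you flag does not actually arise and the initial-data argument you invoke to resolve it is unnecessary, though harmless.
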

\begin{proof}
	Note that \eqref{HomoEq0} is a direct consequence of \eqref{reducedsystem1eq1}, \eqref{CloakingEq} is proved in Theorem \ref{governingeqincloak}, and \eqref{Interface2}-\eqref{InitialEq} are direct consequences  of \eqref{transcond1}-\eqref{intialEH} and the above assumption (i).  For the jump conditions  \eqref{Interface}-\eqref{Interface1}, we recall the standard transmission conditions
	\begin{equation}\label{continuEH}
	\bs E^i\times \hat{ \bs r}=\bs E^{i+1}\times \hat{ \bs r},\quad \bs H^i\times \hat{ \bs r}=\bs H^{i+1}\times\hat{ \bs r}\quad{\rm at}\;\;\; \Gamma_i, \quad i=0, 1.
	\end{equation}
	The first jump conditions in \eqref{Interface}-\eqref{Interface1} can be obtained by directly applying  the first constitutive relation between $\bs E$ and $\bs D$ to the above transmission condition on $\bs E$. Therefore, we focus on the first jump conditions in \eqref{Interface}-\eqref{Interface1}. Inserting the constitutive relations \eqref{Constfinal} into \eqref{continuEH} directly leads to \eqref{Interface} and 
	\begin{equation}
	\bs B^0\times \hat{ \bs r}=\mathscr{D}_2[\bs B^1]\times \hat{ \bs r} \quad{\rm at}\;\;\;\Gamma_1;\quad \bs B^2\times\hat{ \bs r}=\mathscr{D}_2[\bs B^1]\times\hat{ \bs r} \quad{\rm at}\;\;\; \Gamma_2.
	\end{equation}
From \eqref{pHpt}, we derive
	\begin{equation}
	{\partial_t \bs B^0}\times \hat{ \bs r}=\mathscr{D}_2\big[{\partial_t \bs B^1}\big]\times \hat{ \bs r} \quad{\rm at}\;\;\;\Gamma_1;\quad {\partial_t \bs B^2}\times \hat{ \bs r}=\mathscr{D}_2\big[{\partial_t \bs B^1}\big]\times \hat{ \bs r} \quad{\rm at}\;\;\; \Gamma_2,
	\end{equation}
	which,  together with \eqref{reducedsystem1eq1} and \eqref{B2D}, yields  the second jump conditions in \eqref{Interface}-\eqref{Interface1}. This ends the derivation.
\end{proof}	

\subsubsection{VSH-spectral-element discretization}\label{sect::sem}
In view of the spherical geometry and radially stratified dispersive media, we can fully exploit these advantages to develop  an efficient and accurate VSH-spectral-element solver for the Maxwell's system \eqref{sphericalcloakmodel}. Needless to say,  it is optimal compared with the FDTD simulation in  \cite[pp. 7307]{zhao2009full} for the time-domain Pendry's spherical cloak.


The key is to employ the divergence-free VSH expansion of the fields and reduce the governing equations into two sequences of decoupled one-dimensional problems.
By proposition \ref{EFsinsol}, the solenoidal fields $\bs D^i$, $\bs F^i$ and $\bs D^{in}$ can have VSH expansions
\begin{equation}\label{unknownfieldsexp}
\{\bs D^i, \bs F^i\}=\{u_{00}^i, f_{00}^i\}\,\bs Y_0^0 +  \sum_{l=1}^\infty\sum_{|m|=0}^l \Big\{\{u_{lm}^i,f_{1,l}^{i,m}\}\, \bs\Phi_l^m + \nabla \times (\{v_{lm}^i,f_{2,l}^{i,m}\}\,  \bs\Phi_l^m\big)\Big\},
\end{equation}
and
\begin{equation}
\bs D^{\rm in}=g_{00}\,\bs Y_0^0 +  \sum_{l=1}^\infty\sum_{|m|=0}^l \Big\{g_{lm}\, \bs\Phi_l^m
+ \nabla \times (h_{lm}\,  \bs\Phi_l^m\big)\Big\}.
\end{equation}
It is worthy of pointing out that the capacity operator in \eqref{DtN} has two alternative expressions \eqref{oldEtM} and \eqref{newEtM00}. Both use the usual VSH expansion coefficients. For example, we have
\begin{equation}\label{EtMD}
\mathscr T_b[\bs D^{3}]=\frac c b \sum_{l=1}^\infty\sum_{|m|=0}^l\bigg\{
\frac{\omega_l\ast D_{lm}^{r}}{l(l+1)}\,{\bs \Psi}_l^m+\big(\sigma_l\ast D_{lm}^{(2)}\big)\,\vt\bigg\},
\end{equation}
according to \eqref{newEtM00}, where $\{D_{lm}^r, D_{lm}^{(1)}, D_{lm}^{(2)}\}$ are the coefficients in the VSH expansion
\begin{equation}
\bs D^{3}=D_{00}\bs Y_0^0+\sum_{l=1}^\infty\sum_{|m|=0}^l\bigg\{
D_{lm}^{r}\,\bs Y_{lm}+D_{lm}^{(1)}\,{\bs \Psi}_l^m+D_{lm}^{(2)}\,\vt\bigg\}.
\end{equation}
In order to do dimension reduction using expansion \eqref{unknownfieldsexp}, we re-express the formulation \eqref{EtMD} using coefficients $\{u_{lm}^3, v_{lm}^3\}$. From the Proposition \ref{EFsinsol}, we have relations
\begin{equation}
D_{lm}^{r}=\frac{l(l+1)}{r}v_{lm}^{3},\quad D_{lm}^{(1)}=\hat{\partial}_rv_{lm}^{3}, \quad D_{lm}^{(2)}=u_{lm}^{3}.
\end{equation}
A simple substitution in \eqref{EtMD} gives
\begin{equation}\label{newEtMdivfree}
{\mathscr T}_b[\bs D^{3}]=\frac c b \sum_{l=1}^\infty\sum_{m=-l}^l\bigg\{
b^{-1}(\omega_l\ast v_{lm}^{3})\,{\bs \Psi}_l^m+\big(\sigma_l\ast u_{lm}^{3}\big)\,\vt\bigg\}.
\end{equation}

\begin{proposition}\label{newprob}  For $l\geq 1$, $|m|\leq l$ and $i=0, 1, 2, 3,$ denote
	\begin{equation}\label{uvfh}
	g=g_{lm},\;\;  h=h_{lm},\;\;  u^i=u_{lm}^i, \;\;   v^i=v_{lm}^i, \;\; f_1^i=f_{1,l}^{i,m},\;\;  f_2^i=f_{2,l}^{i,m}, \;\; I_i:=(R_i, R_{i+1}).
	\end{equation}
	With the simple variable substitution
	\begin{equation}\label{varisub}
	\tilde{u}^0=\epsilon u^0,\quad \tilde{u}^1=u^1,\quad \tilde{u}^2=\epsilon u^2, \quad \tilde{u}^3=\epsilon u^3, \quad \tilde{g}=\epsilon g,
	\end{equation}
	the Maxwell system \eqref{sphericalcloakmodel} reduced to the following two sequences of one-dimensional problem for $v$ and $\tilde u$, respectively, for $l\geq 1$, $|m|\leq l$:
	\begin{subequations}\label{veqsys}
		\begin{numcases}{}
		\frac{\partial^2 v^i}{\partial t^2}- \frac{c^2}{r^2}\frac{\partial}{\partial r}\Big( r^2 \frac{\partial v^i}{\partial r}  \Big)+\frac{c^2\beta_l}{r^2}v^i=f_2^i, \quad r\in I_i,\quad i=0, 2, 3,\label{vhomo0}\\[4pt]
		\frac{\partial^2 v^1}{\partial t^2}-\frac{c^2}{\epsilon^2r^2}\frac{\partial}{\partial r}\Big( r^2 \frac{\partial v^1}{\partial r}  \Big)+\frac{c^2}{\epsilon} \frac{\beta_l}{r^2}v^1+\frac{c^2}{\epsilon } \frac{\beta_l}{r^2} \vartheta_1\ast v^1(r,t)=0,\quad r\in I_1,\label{vcloak}\\[4pt]
		v^0=v^1,\quad \partial_r v_1=\epsilon \partial_r v^0+(\epsilon-1)r^{-1}v^0\quad {\rm at}\;\;\; r=R_1,\label{interfacev}\\[4pt]
		v^2=v^1,\quad \partial_r v_1=\epsilon \partial_r v^2+(\epsilon-1)r^{-1}v^2\quad  {\rm at}\;\;\;r=R_2,\label{interfacev1}\\[4pt]
		v^2-v^3=h,\quad \partial_r v^2-\partial_rv^3=\partial_rh\quad {\rm at}\;\;\;r=R_3,\label{interfacev2}\\[4pt]
		\frac{1}{c}\partial_t v^3+\frac{\partial v^3 }{\partial r}+\frac{1}{b}v^3-\frac{1}{b}\sigma_l*v^3=0\quad {\rm at}\;\;\;r=b,\label{DtNv}\\[4pt]
		v|_{t=0}=\partial_t v|_{t=0}=0.\label{initialv}
		\end{numcases}
	\end{subequations}
	while $\tilde u$ satisfies the same equations in \eqref{veqsys} with $\tilde u,$ $f_1,$ $\tilde g$ and $\vartheta_2$, in place of $v,$ $f_2,$ $h$ and $\vartheta_1$, respectively, and for $l=m=0,$
	\begin{equation}
	\partial^2_t u_{00}^i=f_{00}^i,\quad u^i_{00}|_{t=0}=\partial_tu^i_{00}|_{t=0}=0,\quad r\in I_i.
	\end{equation}
\end{proposition}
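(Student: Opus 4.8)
The plan is to substitute the divergence-free VSH expansions \eqref{unknownfieldsexp} into every line of the full model \eqref{sphericalcloakmodel} and then to use the orthogonality of $\{\bs Y_l^m,\bs\Psi_l^m,\bs\Phi_l^m\}$ together with the curl identities of Appendix~\ref{spherehar} to collapse each vector relation into scalar radial relations for the coefficients. The structural fact that makes everything separate is that the toroidal part $u_{lm}^i\bs\Phi_l^m$ and the poloidal part $\nabla\times(v_{lm}^i\bs\Phi_l^m)$ never mix: applying $\nabla\times$ to $f\bs\Phi_l^m$ produces only $\bs Y_l^m$ and $\bs\Psi_l^m$ terms (as already recorded in \eqref{conseptequiv}), while applying $\nabla\times$ to a $\bs Y_l^m,\bs\Psi_l^m$ combination returns a pure $\bs\Phi_l^m$ term, so $\nabla\times\nabla\times$ preserves each class. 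Moreover $\mathscr D_1,\mathscr D_2$ act diagonally on the VSH components, multiplying the tangential ($\bs\Psi,\bs\Phi$) coefficients by $\epsilon^{-1}$ and leaving the radial ($\bs Y$) coefficient up to the convolution $\vartheta_k\ast(\cdot)$, hence they too respect the splitting. Thus the problem decouples into two independent scalar families, one for $v_{lm}^i$ and one for $u_{lm}^i$, plus the trivial monopole $l=m=0$ mode.

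For the bulk equations I would first treat the source-free and vacuum regions $i=0,2,3$. Writing $\nabla\times\nabla\times(v\bs\Phi_l^m)=\big(-\tfrac1{r^2}\partial_r(r^2\partial_r v)+\tfrac{\beta_l}{r^2}v\big)\bs\Phi_l^m$ (the radial reduction of the vector Laplacian, with $\beta_l=l(l+1)$), equation \eqref{HomoEq0} projected onto the poloidal class gives \eqref{vhomo0} at once, and the same computation with the toroidal forcing coefficient gives its counterpart. In the cloak I would carry $\nabla\times\big(\mathscr D_2[\nabla\times(\mathscr D_1[v^1\bs\Phi_l^m])]\big)$ through one operator at a time: $\nabla\times(v^1\bs\Phi_l^m)$ has radial coefficient $\tfrac{\beta_l}{r}v^1$ and tangential coefficient $\hat\partial_r v^1$; $\mathscr D_1$ turns these into $\tfrac{\beta_l}{r}(v^1+\vartheta_1\ast v^1)$ and $\epsilon^{-1}\hat\partial_r v^1$; a further curl and a further factor $\epsilon^{-1}$ from $\mathscr D_2$ (acting on a purely tangential $\bs\Phi_l^m$ field) then reproduce \eqref{CloakingEq} in the stated form \eqref{vcloak}, the two factors $\epsilon^{-1}$ landing on the radial-derivative term as $\epsilon^{-2}$ and the single factor on the $\beta_l$-terms as $\epsilon^{-1}$, with $\vartheta_1\ast v^1$ entering only through the radial component of $\mathscr D_1$.

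The interfaces \eqref{Interface}--\eqref{Interface2} and the NRBC \eqref{DtN} are where the main obstacle lies. The first (tangential-$\bs E$) jump in each pair matches $\bs\Psi$-coefficients and gives the weighted relation $\epsilon^{-1}\hat\partial_r v^1=\hat\partial_r v^{0}$ (resp. $v^2$); the continuity of $v$ itself comes from the second (tangential-$\bs H$) jump, whose $\bs\Phi$-coefficient is \emph{a priori} second order in $r$. The key step is to eliminate $\hat\partial_r\hat\partial_r v$ there using the bulk equations \eqref{vhomo0}--\eqref{vcloak} just derived, which (via $\partial_t\bs B=-\varepsilon_0^{-1}\nabla\times\mathscr D_1[\bs D]$ and the commutation \eqref{pHpt}) collapses that coefficient to $-\partial_t^2 v$; tangential-$\bs H$ continuity then reads $\partial_t^2 v^1=\partial_t^2 v^{0,2}$, and the homogeneous initial data \eqref{InitialEq} integrate this to $v^1=v^{0,2}$. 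Substituting $v^1=v^{0,2}$ back into the tangential-$\bs E$ relation rearranges it into the stated conditions \eqref{interfacev}--\eqref{interfacev1}, while at $\Gamma_2$, where $\epsilon=1$ on both sides, the incident data yield \eqref{interfacev2}. The same mechanism disposes of the NRBC: projecting \eqref{DtN} with the established identity \eqref{newEtMdivfree}, replacing $\omega_l$ by $\tfrac{b}{c}(\sigma_l'+\sigma_l(0)\delta)$ from \eqref{kernelomega00}, reducing $(\nabla\times\bs D^3)\times\hat{\bs r}$ to a $\partial_t^2 v^3$ term through \eqref{vhomo0}, and then factoring a common $\partial_t$ and integrating once in time, produces the first-order Robin-type condition \eqref{DtNv} with the $\sigma_l$-convolution.

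Finally, the substitution \eqref{varisub} is introduced only to absorb the residual powers of $\epsilon$ weighting the toroidal traces in the vacuum regions, where $\bs E=\bs D$ but the tangential anisotropy $\epsilon$ still enters the $u$-traces: here the tangential-$\bs E$ condition reads $\epsilon^{-1}u^1=u^{0,2}$, and setting $\tilde u=\epsilon u$ in $\Omega_{0,2,3}$ while keeping $\tilde u^1=u^1$ turns this into the clean continuity $\tilde u^1=\tilde u^{0,2}$. With this, the $\tilde u$-equations, interface conditions and NRBC take exactly the form \eqref{veqsys} with $f_1,\tilde g,\vartheta_2$ in place of $f_2,h,\vartheta_1$, the convolution $\vartheta_2$ now appearing because for the toroidal family the radial component is created by the curl and is acted on by $\mathscr D_2$ rather than $\mathscr D_1$. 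The monopole mode, for which the curl terms vanish since $\nabla\times(g(r)\bs Y_0^0)=\bs 0$, reduces \eqref{HomoEq0} to $\partial_t^2 u_{00}^i=f_{00}^i$, and \eqref{initialv} is the direct projection of \eqref{InitialEq}. I expect the reduction of the second jump condition and of the NRBC from second order in $r$ to the stated first-order forms, through the combined use of the bulk equations and the zero initial data, to be the only genuinely delicate point; the remaining work is careful bookkeeping of the $\epsilon$ and $\vartheta_k$ factors.
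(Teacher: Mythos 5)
Your proposal is correct and follows essentially the same route as the paper's proof in Appendix B: divergence-free VSH expansion with the toroidal/poloidal split preserved by the curls and by the uniaxial operators $\mathscr D_1,\mathscr D_2$, careful tracking of the $\epsilon^{-1}$ and $\vartheta_k$ factors through \eqref{D1Di}--\eqref{curlD1D1}, reduction of the second-order jump conditions and of the NRBC to first order by substituting the bulk equations (the paper's \eqref{jumpeq}) and integrating in time with zero initial data, and finally the rescaling $\tilde u=\epsilon u$ to symmetrize the $u$-interface conditions. You also correctly single out the genuinely delicate step (the order reduction at the interfaces and at $r=b$, the latter via $\omega_l=\tfrac bc(\sigma_l'+\sigma_l(0)\delta)$), so nothing essential is missing.
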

\begin{proof}
	We postpone the detailed derivation in \ref{proofnewprob}.
\end{proof}

The above proposition shows that  $\tilde u$ and $v$ can be obtained by solving \eqref{veqsys} with different input data. Therefore, we only need to focus on the one dimensional problems \eqref{veqsys}. Note that the solution of \eqref{veqsys} has a jump at $r=R_3$. We introduce
\begin{equation}
\tilde v(r, t)=\begin{cases}
\displaystyle v(r, t), \quad 0\leq r\leq R_3,\quad t\geq 0,\\[4pt]
\displaystyle v(r, t)+h(R_3, t)\frac{b-r}{b-R_3},\quad R_3<r\leq b,\quad t\geq 0,
\end{cases}
\end{equation}
and
\begin{equation*}
\tilde f_2(r, t)=\begin{cases}
\displaystyle f_2(r, t), \quad 0< r< R_3,\;\; t>0,\\[4pt]
\displaystyle f_2(r, t)+\bigg\{\frac{\partial^2h(R_3, t)}{\partial t}+\Big(\frac{2bc^2}{r(b-r)}+\frac{c^2\beta_{l}}{r^2}\bigg)h(R_3, t)\bigg\}\frac{b-r}{b-R_3},\;\; R_3<r< b,\;\; t>0,
\end{cases}
\end{equation*}
to rewrite \eqref{veqsys} into
\begin{subequations}\label{veqsyscontinuous}
	\begin{numcases}{}
	\frac{\partial^2 \tilde v^i}{\partial t^2}- \frac{c^2}{r^2}\frac{\partial}{\partial r}\Big( r^2 \frac{\partial \tilde v^i}{\partial r}  \Big)+\frac{c^2\beta_l}{r^2}\tilde v^i=\tilde f_2^i, \quad r\in I_i, \;\; i=0, 2, 3,\label{vgeqoutcloak}\\[4pt]
	\frac{\partial^2 \tilde v^1}{\partial t^2}-\frac{c^2}{\epsilon^2r^2}\frac{\partial}{\partial r}\Big( r^2 \frac{\partial \tilde v^1}{\partial r}  \Big)+\frac{c^2}{\epsilon} \frac{\beta_l}{r^2}\tilde v^1+\frac{c^2}{\epsilon } \frac{\beta_l}{r^2} \vartheta_1\ast \tilde v^1(r,t)=0,\quad r\in I_1,\label{vgeqincloak}\\[4pt]
	\tilde v^0=\tilde v^1,\quad \partial_r \tilde v_1=\epsilon \partial_r \tilde v^0+(\epsilon-1)r^{-1}\tilde v^0\quad {\rm at}\;\;\; r=R_1,\\[3pt]
	\tilde v^2=\tilde v^1,\quad \partial_r \tilde v_1=\epsilon \partial_r \tilde v^2+(\epsilon-1)r^{-1}\tilde v^2\quad  {\rm at}\;\;\;r=R_2,\\
	\tilde v^2=\tilde v^3,\quad \partial_r \tilde v^2-\partial_r\tilde v^3=\partial_rh+\frac{1}{b-R_3}h(R_3, t)\quad {\rm at}\;\;\;r=R_3,\\
	\frac{1}{c}\partial_t\tilde v^3+\frac{\partial \tilde v^3 }{\partial r}+\frac{1}{b}\tilde v^3-\frac{1}{b}\sigma_l*\tilde v^3=-\frac{1}{b-R_3}h(R_3, t)\quad {\rm at}\;\;\;r=b,\\
	\tilde v|_{t=0}=h(R_3, 0)\frac{b-r}{b-R_3}\chi_{[R_3, b]},\quad \partial_t \tilde v|_{t=0}=\frac{\partial h(R_3, 0)}{\partial t}\frac{b-r}{b-R_3}\chi_{[R_3, b]}.
	\end{numcases}
\end{subequations}
Here $\chi_{[R_3, b]}$ is the indicator function which is equal to $1$ inside the interval $[R_3,b]$ and vanish outside.

Obviously, $\tilde v(r, t)$ is continuous in $I$. Multiplying \eqref{vgeqoutcloak} and \eqref{vgeqincloak} by test function $r^2\phi$ and $\epsilon r^2\phi$ respectively for $\phi\in H^1(I)$, using integration by parts and summing up the resulted equations, then applying the interface conditions and boundary condition we obtain the variational problem: Find $\tilde v(\cdot, t)\in H^1(I)$, s.t.
\begin{equation}\label{variationalprob}
\mathcal{B}(\tilde v, \phi)=(\tilde f_2, \phi)+\Big(\partial_rh(R_3, t)+\frac{h(R_3,t)}{b-R_3}\Big)c^2R_3^2\phi(R_3)-\frac{h(R_3,t)}{b-R_3}c^2b^2\phi(b),
\end{equation}
for all $\phi\in H^1(I)$, where
\begin{equation}\label{Avw1}
\begin{split}
\mathcal B(\tilde v,\phi):=&\int_{I\backslash I_1}(r^2\partial_{tt}\tilde v\phi+c^2r^2\partial_r\tilde v\partial_r\phi)dr+\int_{I_1}\Big(\epsilon r^2\partial_{tt}\tilde v\phi+\frac{c^2r^2}{\epsilon}\partial_r\tilde v\partial_r\phi\Big)dr\\
+&\beta_lc^2\Big(\int_{I}\tilde v\phi dr+\int_{I_1}\vartheta_1\ast \tilde v(r,t)\phi(r)dr\Big)+c^2(\epsilon-1)R_1\tilde v(R_1,t)\phi(R_1)\\
-&c^2(\epsilon-1)R_2\tilde v(R_2,t)\phi(R_2)+\big\{cb^2\partial_t\tilde v(b,t)+c^2b(\tilde v(b,t)-\sigma\ast \tilde v(b, t))\big\}\phi(b).
\end{split}
\end{equation}


Based on the variational problem \eqref{variationalprob}, we introduce the spectral-element discretization.
Let $\mathcal{I}_h:0=r_0<r_1<\cdots<r_E=b$ be an interface conforming mesh of the interval $I$ and denote the element by $\{K_e=(r_{e-1}, r_e)\}_{e=1}^E$. Here, the interface conforming mesh means that the points $r=R_1, R_2, R_3$ are mesh points, see Figure \ref{mesh}. Let $\mathcal{P}_N(K_e)$ be the set of all complex valued polynomials of degree at most $N$ in each interval $K_e$ and define the spectral element approximation space as
\begin{equation}\label{XnXn}
\mathscr{X}_N(\mathcal{I}_h):=\big\{u\in H^1(I): u|_{K_e}\in \mathcal{P}_N(K_e)\big\}.
\end{equation}
The spectral element discretization of \eqref{veqsyscontinuous} is to find $\tilde v_N(r,t)\in {\mathscr X}_N(\mathcal{I}_h)$ for all $t>0$, such that
\begin{equation}\label{uN1dprob}
\mathcal{B}(\tilde v_N, \phi)=(\tilde f_2, \phi)+\Big(\partial_rh(R_3, t)+\frac{h(R_3,t)}{b-R_3}\Big)c^2R_3^2\phi(R_3)-\frac{h(R_3,t)}{b-R_3}c^2b^2\phi(b),
\end{equation}
for all $\phi\in {\mathscr X}_N(\mathcal{I}_h)$.
\begin{figure}[!ht]
	\centering
	\includegraphics[width=0.7\linewidth]{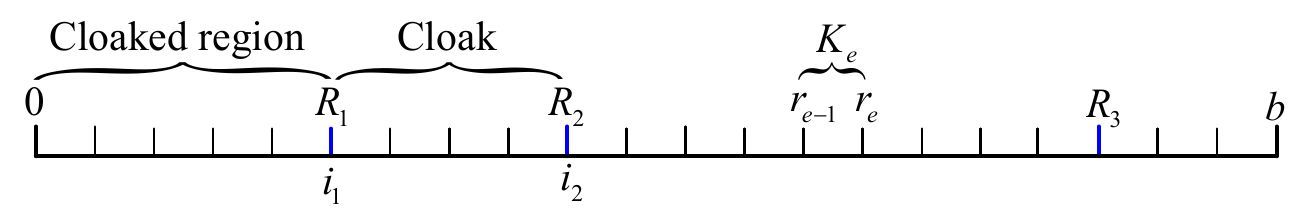}
	\caption{\small Interface conforming mesh used by the spectral element discretization.}
	\label{mesh}
\end{figure}

This spectral element discretization for $\tilde v$ leads to the following integral differential system
\begin{equation}
\label{diffinteq2}
\begin{split}
&\mathbb M\ddot{\bs V}+\mathbb B\dot{\bs V}+{\mathbb C}{\bs V}+\bs G-\frac{c}{b}\mathbb B(\sigma\ast{\bs V})={\bs F}, \quad \bs V(0)=\bs V_0,\quad \dot{\bs V}(0)=\bs V_1,
\end{split}
\end{equation}
where
$$\mathbb M=(m_{ij})_{\mathcal{N}\times \mathcal{N}},\quad \mathbb A=(a_{ij})_{\mathcal{N}\times \mathcal{N}},\quad \bs G=(G_i)_{\mathcal{N}},$$
$$\mathbb B=cb^2\mathbb E_{\mathcal{N}\mathcal{N}},\quad\mathbb C=\mathbb A+c^2b\mathbb E_{\mathcal{N}\mathcal{N}}+c^2(\epsilon-1)(R_1\mathbb E_{i_1i_1}-R_2\mathbb E_{i_2i_2}),$$
are matrices with entries given by
\begin{equation*}
\begin{split}
& m_{ij}=\int_{I\backslash I_1}r^2\phi_i\phi_jdr+\epsilon\int_{I_1}r^2\phi_i\phi_jdr,\quad G_i=\beta_lc^2\int_{I_1}{\vartheta}_1\ast \tilde v_N(r, t)\phi_i(r)dr,\\
& a_{ij}=c^2\int_{I\backslash I_1}(r^2\partial_r\phi_j\partial_r\phi_i+\beta_l\phi_j\phi_i)dr+c^2\int_{I_1}\Big(\frac{r^2}{\epsilon}\partial_r\phi_j\partial_r\phi_i+\beta_l\phi_j\phi_i\Big)dr,\\
&F_i=\int_{I}\tilde f_2\phi_i(r)dr+\Big(\partial_rh(R_3, t)+\frac{h(R_3,t)}{b-R_3}\Big)c^2R_3^2\phi_i(R_3)-\frac{h(R_3,t)}{b-R_3}c^2b^2\phi_i(b).
\end{split}
\end{equation*}
Here, $i_1$, $i_2$ denote the global index of the freedom at $r=R_1, R_2$ (see Figure \ref{mesh} for illustration), respectively, $\mathcal{N}$ is the degree of freedom and also the global index of the freedom attached to mesh point $r=b$, and $\mathbb E_{mn}=(E_{ij})_{\mathcal{N}\times \mathcal{N}}$ is the matrix with only one non-zero entry $E_{mn}=1$.

\subsubsection{Newmark's scheme for time discretization}
The spectral element discretization leads to the integral differential system \eqref{diffinteq2} w.r.t $t$. Noting that all the involved time integrations are actually convolutions of exponential functions with the unknown functions, fast algorithm based on formula \eqref{fastconvformula} can be used. Let us first discuss the discretization of the convolution ${\vartheta}_1\ast \tilde v_N(r, t)$. Define
\begin{equation}
\tilde{\vartheta}_1^{\alpha}(r, t)=e^{\ri \zeta^0_1(r) t},\quad \tilde{\vartheta}_1^{\beta}(r, t)=e^{\ri \zeta^1_1(r) t}.
\end{equation}
Then
\begin{equation}\label{thetaconvsplit}
{\vartheta}_1\ast \tilde v_N(r, t)=\frac{\ri (\omega_{p,1}(r))^2}{\zeta^0_1(r)-\zeta^1_1(r)}\big(\tilde{\vartheta}_1^{\alpha}\ast \tilde v_N(r, t)-\tilde{\vartheta}_1^{\beta}\ast \tilde v_N(r,t)\big).
\end{equation}
By using the trapezoidal rule and  \eqref{fastconvformula}, we have the second-order approximations
\begin{equation}\label{kernelastv}
\begin{split}
&\tilde{\vartheta}_1^{\alpha}\ast \tilde v_N(r,t_{n+1})\approx\lambda_0(r)\tilde{\vartheta}_1^{\alpha}\ast \tilde v_N(r, t_{n})+\frac{\Delta t}{2}(\tilde v_N(r, t_{n+1})+\lambda_0(r)\tilde v_N(r, t_n)),\\
&\tilde{\vartheta}_1^{\beta}\ast \tilde v_N(r, t_{n+1})\approx\lambda_1(r)\tilde{\vartheta}_1^{\beta}\ast \tilde v_N(r, t_{n})+\frac{\Delta t}{2}(\tilde v_N(r, t_{n+1})+\lambda_1(r)\tilde v_N(r,t_n)),
\end{split}
\end{equation}
where
\begin{equation}
\lambda_0(r)=e^{\ri \zeta^0_1(r)\Delta t},\quad \lambda_1(r)=e^{\ri \zeta^1_1(r)\Delta t}.
\end{equation}
Substituting \eqref{kernelastv} into \eqref{thetaconvsplit}, we obtain 
\begin{equation}
{\vartheta}_1\ast \tilde v_N(r, t_{n+1})\approx \frac{\ri(\omega_{p,1}(r))^2}{\zeta^0_1(r)-\zeta^1_1(r)}\big(\tilde v^c_{N}(r,t_n)+\frac{\Delta t}{2}(\lambda_0(r)-\lambda_1(r))\tilde v_N(r,t_n)\big),
\end{equation}
where
$$\tilde v^c_N(r, t_n):=(\lambda_0(r)\tilde{\vartheta}_1^{\alpha}-\lambda_1(r)\tilde{\vartheta}_1^{\beta})\ast \tilde v_N(r, t_n).$$
Thus, we get the discretization for $\bs G(t_{n+1})$ given by $\bs G^{n+1}:=(G_i^{n+1})$ with
\begin{equation}\label{Gtimediscrete}
G_i^{n+1}:=\frac{\ri\beta_lc^2(\omega_{p,1}(r))^2}{\zeta^0_1(r)-\zeta^1_1(r)}\int_{I_1}(v^c_{N}(r,t_n)
+\frac{\Delta t}{2}(\lambda_0(r)-\lambda_1(r))v_N(r,t_n))\phi_i(r)\,dr.
\end{equation}
It is important to point out that $\bs G^{n+1}$ is a vector obtained by using the solution before the current time step thus can be moved to the right hand side in the fully discretization scheme. We denote the new right hand side vector by $\widetilde{\bs F}^{n}=\bs F^n-\bs G^n$.

Next, we consider the discretization of the convolution term $(\sigma_l\ast\bs V)(t)$.  For this purpose, we define
\begin{equation}
{\bs V}_j(t):=\int_0^te^{c(t-\tau)z_j^l/b}\bs V(\tau)\,d\tau.
\end{equation}
By using the trapezoidal rule and  \eqref{fastconvformula} again, we obtain the second order approximations
\begin{equation}
{\bs V}^{0}_j=\bs 0, \quad {\bs V}^{n+1}_j= e^{c\Delta tz_j^l/b}{\bs V}^{n}_j+\frac{\Delta t}{2}\bs V^{n+1}+\frac{\Delta t}{2}e^{c\Delta tz_j^l/b}\bs V^n,\;\;
\end{equation}
of ${\bs V}_j(t_{n+1})$ for $j=1, 2, \cdots, l$.
Accordingly, we have
\begin{equation}\label{nrbctimediscrete}
(\sigma\ast{\bs V})^0=\bs 0,\quad (\sigma\ast{\bs V})^{n+1}=\frac{\Delta t}{2}\alpha_1\bs V^{n+1}+\frac{\Delta t}{2}\alpha_2\bs V^{n}+\sum_{j=1}^{l}\alpha_2^j{\bs V}^n_j,
\end{equation}
with
$$\alpha_1=\frac{c}{b} \sum_{j=1}^{l}z_j^l, \quad \alpha_2^j=\frac{c}{b}z_j^le^{c\Delta tz_j^l/b},\quad \alpha_2=\sum_{j=1}^{l}\alpha_2^j,$$
is a second order discretization of the convolution term $(\sigma\ast{\bs V})(t_{n+1})$.

For the dicretization of  time derivatives, we adopt the new marks scheme (cf. \cite{Wang2Zhao12}). The key idea is to use the approximations:
\begin{align}
{\bs V}^{n+1}&={\bs V}^{n}+\Delta t\dot{\bs V}^{n}+\frac{\Delta t^2}{2}(1-2\beta)\ddot{\bs V}^{n}+\beta\Delta t^2\ddot{\bs V}^{n+1},\label{newmarkskey1}\\
\dot{\bs V}^{n+1}&=\dot{\bs V}^{n}+(1-\gamma)\Delta t\ddot{\bs V}^{n}+\gamma\Delta t\ddot{\bs V}^{n+1},\label{newmarkskey2}
\end{align}
where $\beta$ and $\gamma$ are given parameters. Using the approximations \eqref{Gtimediscrete} and \eqref{nrbctimediscrete}, we can formulate the time discretization of the system \eqref{diffinteq2} at $t_{n+1}$ as
\begin{equation}
\label{diffeqtlvn}
\begin{split}
\mathbb M\ddot{\bs V}^{n+1}+\mathbb B\dot{\bs V}^{n+1}+\mathbb C{\bs V}^{n+1}
-\frac{c}{b}\mathbb B\bigg\{\frac{\alpha_1\Delta t}{2}\bs V^{n+1}+\frac{\alpha_2\Delta t}{2}\bs V^{n}+\sum\limits_{j=1}^l\alpha_2^j{\bs V}^n_j\bigg\}=\widetilde{\bs F}^{n+1}.
\end{split}
\end{equation}
Inserting \eqref{newmarkskey2} into \eqref{diffeqtlvn} to eliminate $\dot{\bs V}^{n+1}$ leads to
\begin{eqnarray}
\big(\mathbb M+\gamma\Delta t\mathbb B\big)\ddot{\bs V}^{n+1}+\Big\{\mathbb C-\frac{\alpha_1c\Delta t}{2b}\mathbb B\Big\}{\bs V}^{n+1}=\widetilde{\bs F}^{n+1}-\bs W^n,\label{newmarkeqn}
\end{eqnarray}
where
\begin{equation*}
\bs W^{n}=(1-\gamma)\Delta t\mathbb B\ddot{\bs V}^n+\mathbb B\dot{\bs V}^{n}-\frac{\alpha_2c\Delta t}{2b}\mathbb B\bs V^{n}-\frac{c}{b}\mathbb B\sum\limits_{j=1}^l\alpha_2^j{\bs V}^n_j.
\end{equation*}
From \eqref{newmarkskey1}, we have
\begin{equation}\label{vddotapp}
\beta \Delta t^2\ddot{\bs V}^{n+1}=\bs V^{n+1}-\Big\{\bs V^n+\Delta t\dot{\bs V}^n+\Delta t^2\Big(\frac{1}{2}-\beta\Big)\ddot{\bs V}^n\Big\}:=\bs V^{n+1}-\widetilde{\bs V}^n.
\end{equation}
Using \eqref{vddotapp} in \eqref{newmarkeqn} we arrive the fully discretization scheme
\begin{equation}
\bigg\{\mathbb M+\Big(\gamma\Delta t-\frac{\alpha_1c\beta\Delta t^3}{2b}\Big)\mathbb B+\beta\Delta t^2\mathbb C\bigg\}{\bs V}^{n+1}
=\beta\Delta t^2\big(\widetilde{\bs F}^{n+1}-\bs W^n\big)+(\mathbb M+\gamma\Delta t\mathbb B)\widetilde{\bs V}^n.
\end{equation}
It is known that in general, the Newmark's scheme is of second-order and unconditionally stable, if the parameter satisfy $\gamma\geq\frac{1}{2}$ and $\beta\geq\frac{1}{4}(\frac{1}{2}+\gamma)^2$.

\section{Numerical experiments}\label{sect:numer}
In this section, we shall validate the feasibility and accuracy of the methodology for the simulation of 3D spherical cloaks via some numerical experiments. In all the experiments, we set $\varepsilon_0=\mu_0=1$, $c=1/\sqrt{\varepsilon_0\mu_0}=1$, $R_3=0.95$, $b=1$, $E=20$, $N=20$, $\triangle t=1.0e-3$. All VSH expansions are truncated at $L=40$ and the parameters in Newmark's time discretization are set to $\gamma=0.5, \beta=0.25$.
\subsection{Monochromatic incident wave}
Set
\begin{equation}\label{inc1}
\bs D^{\rm in}(\bs r, t)=(1-e^{-10t})\cos(kx-\omega t)\bs A,\quad \bs A:=\begin{bmatrix}
0 & 0 & A
\end{bmatrix}^{\rm T},
\end{equation}
where $k=\omega=40$ and $A=1$. Note that it gets close to a monochromatic wave very quickly as $t$ increases, e.g. $t>3, (1-e^{-10t})\geq 0.999999999999906$ due to the exponential term. The parameters of the cloaking device are set $\omega_c=40$, $\gamma_1=\gamma_2=0.001$, $R_1=0.15$, $R_2=0.35$. We plot the contours of $D_z$ at different time in Figure \ref{example2fig}. It shows that the spherical domain $|\bs r|<R_1$ is perfectly cloaked from monochromatic wave with angular frequency $\omega=40$. No waves are propagating inside the cloaked region.
\begin{figure}[h!]
	{~}\hspace*{-16pt}
	\centering
	\subfigure[t=1]{\includegraphics[scale=.2]{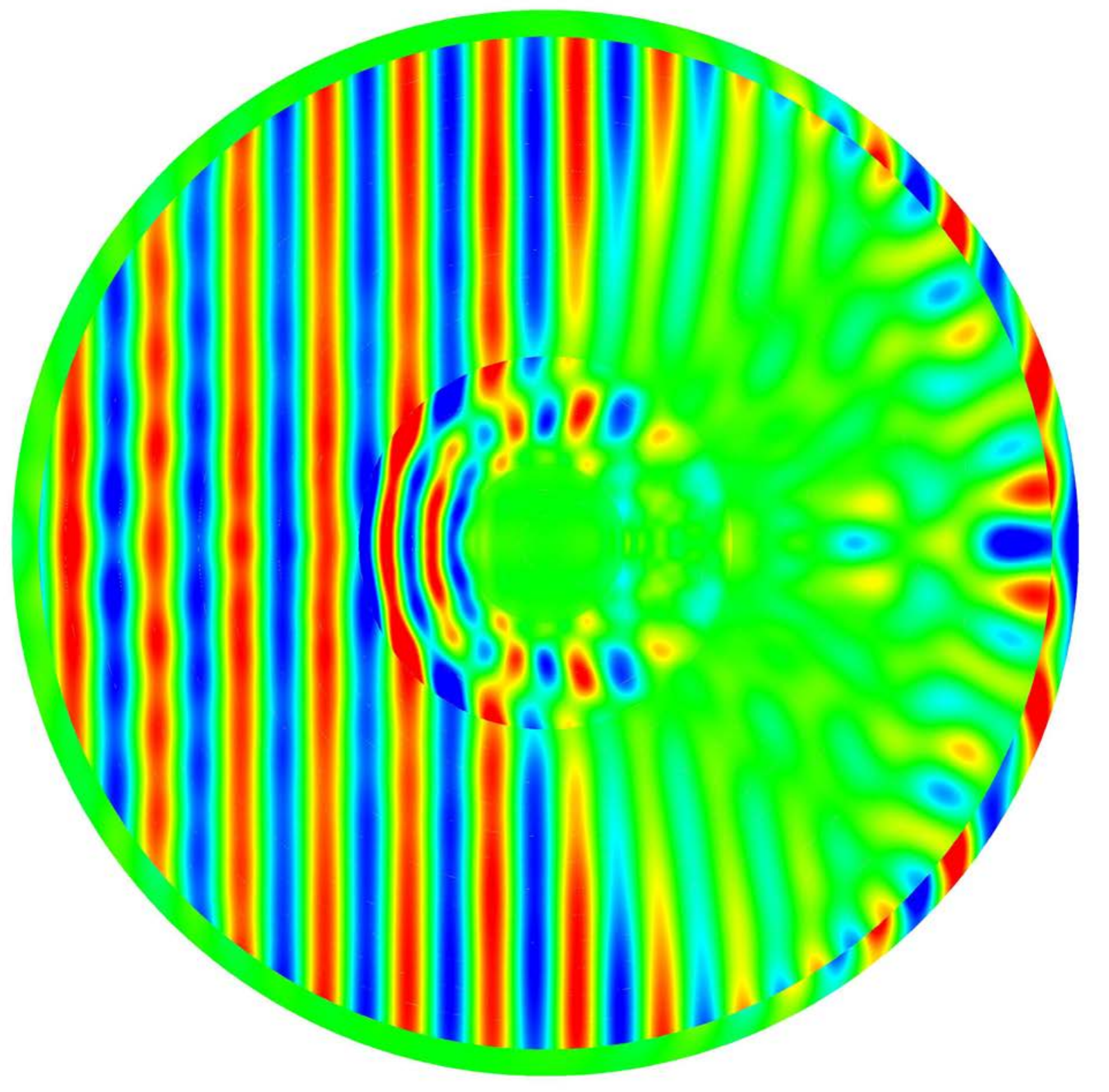}}
	\subfigure[t=3]{ \includegraphics[scale=.2]{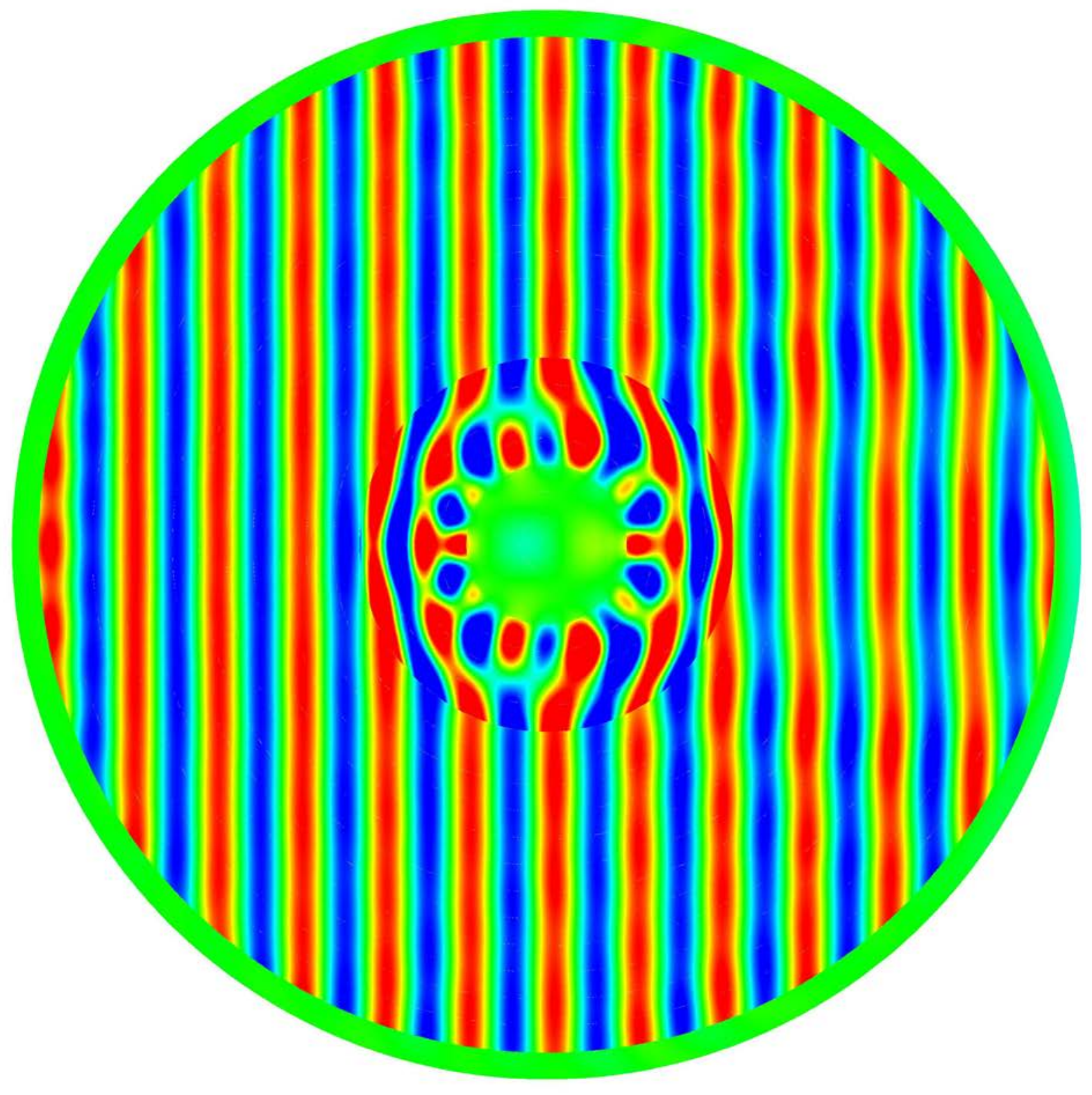}}
	\subfigure[t=5]{ \includegraphics[scale=.2]{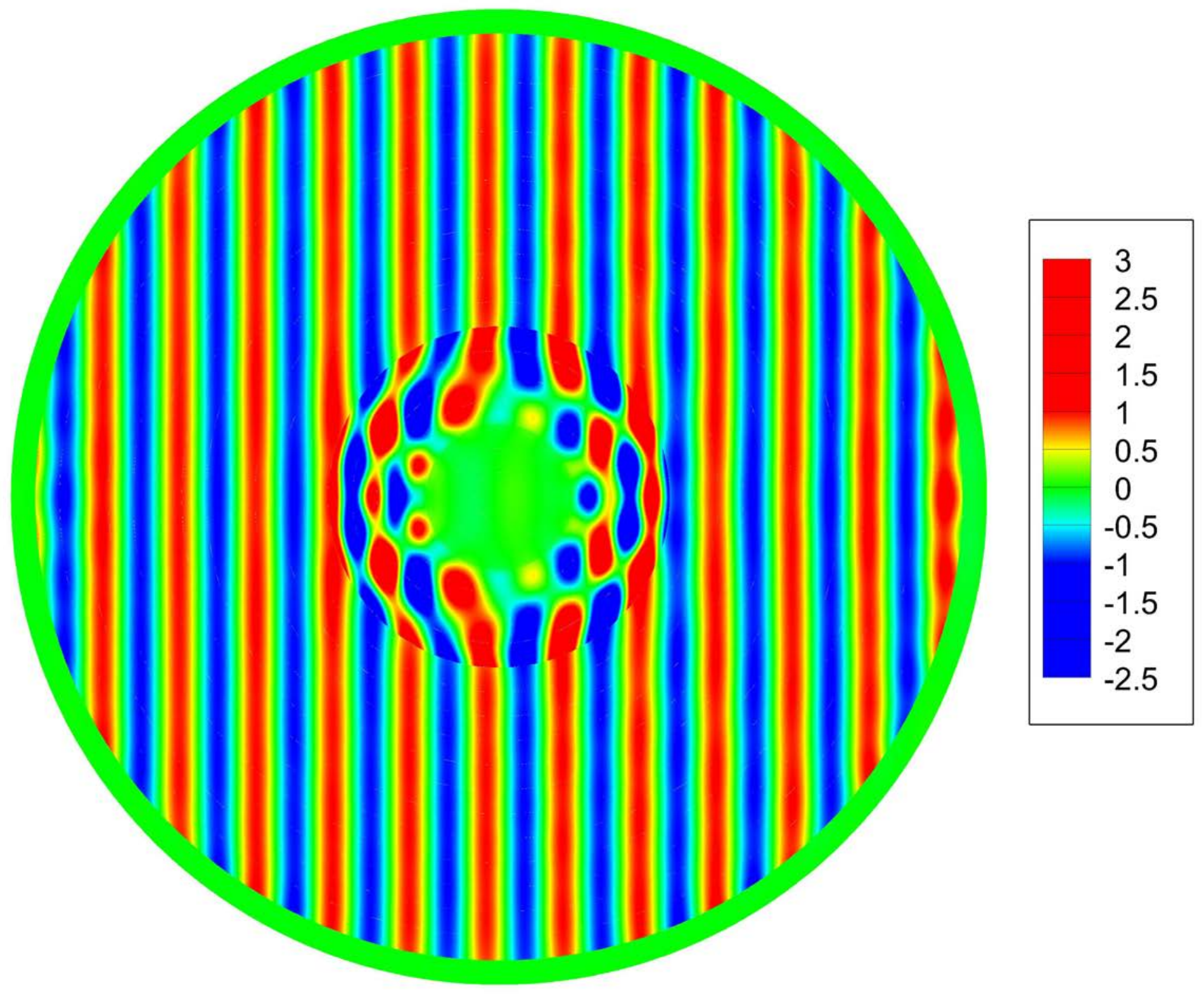}}\\
	\subfigure[t=7]{ \includegraphics[scale=.2]{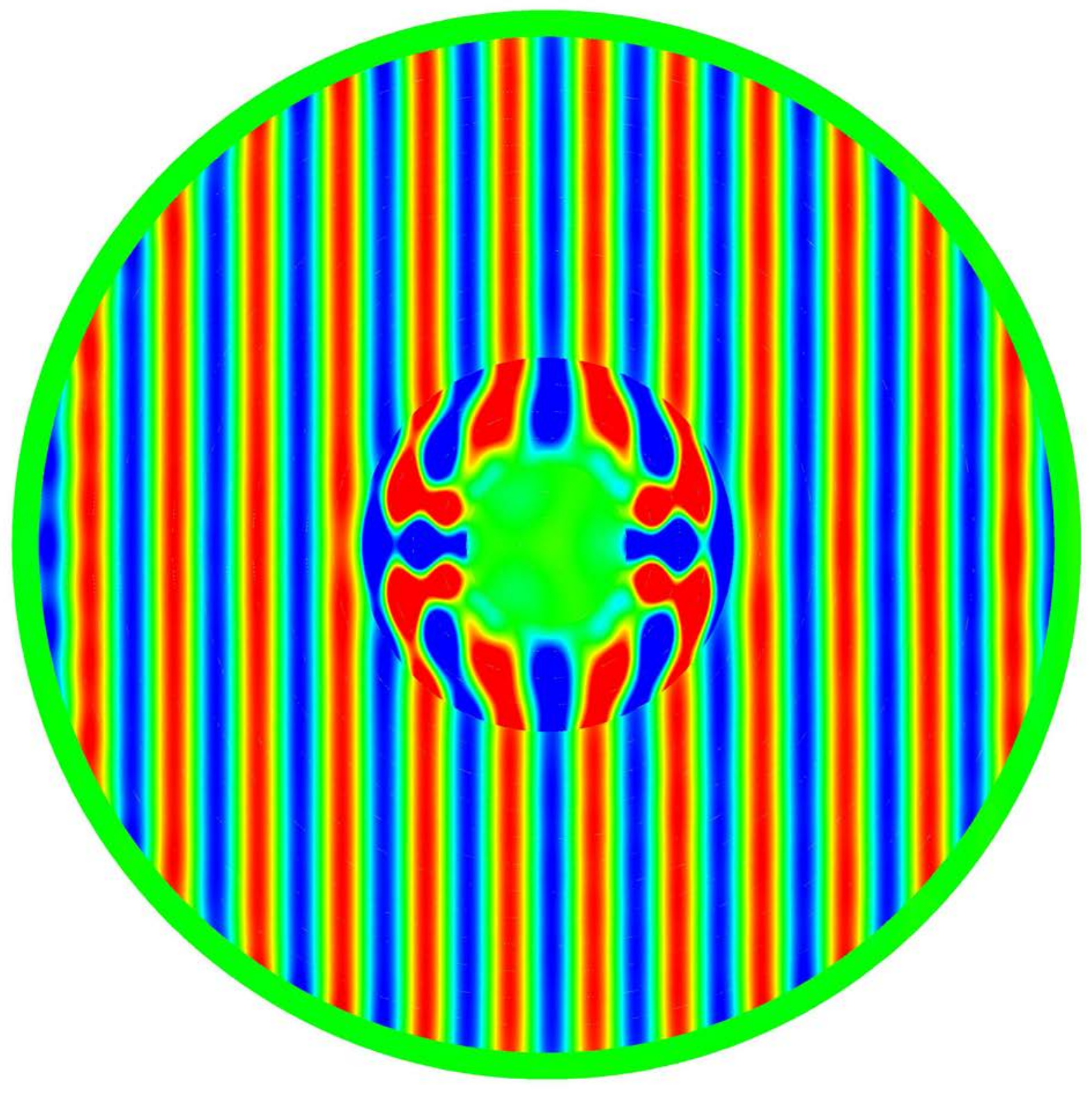}}
	\subfigure[t=9]{ \includegraphics[scale=.2]{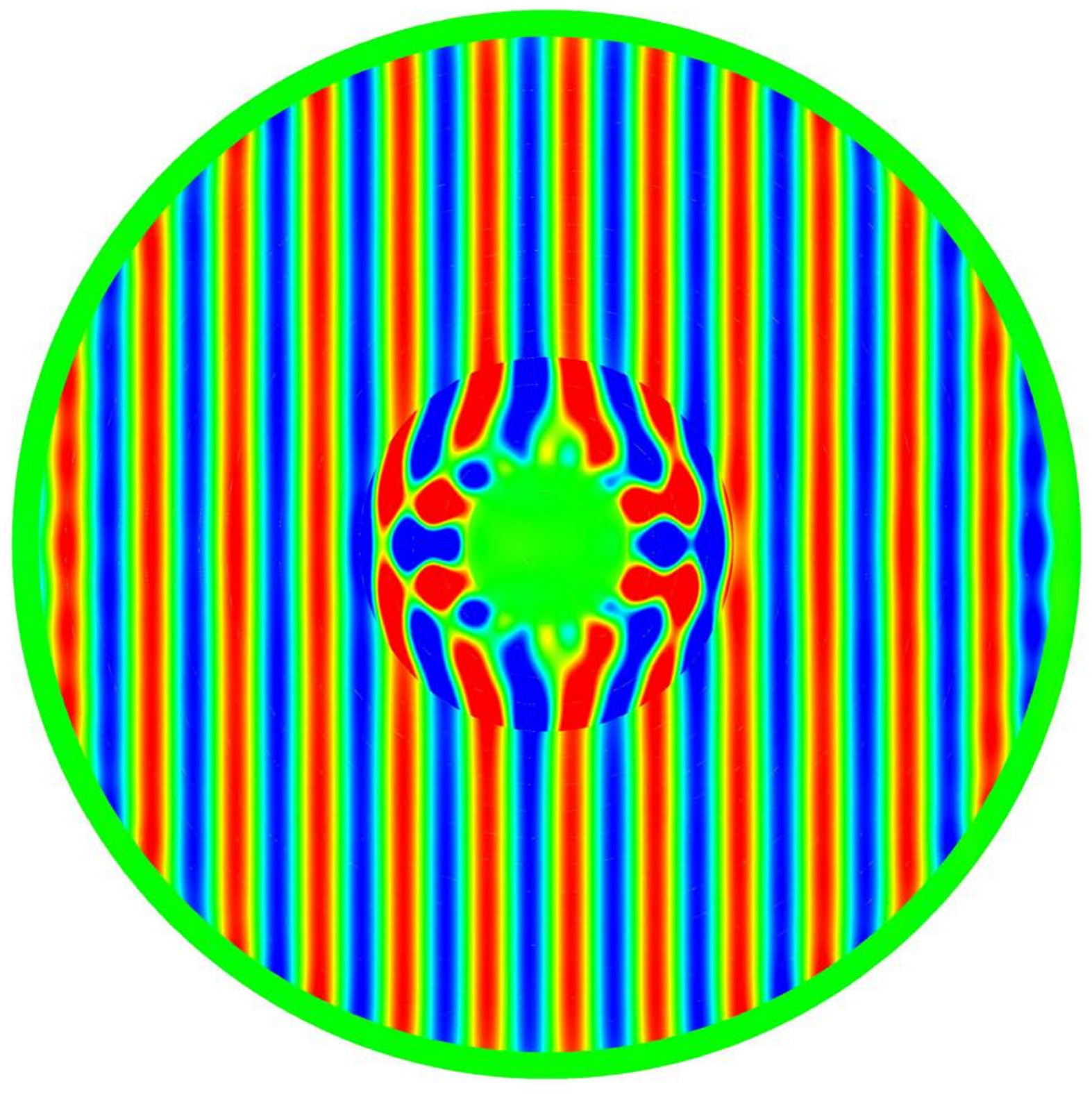}}
	\subfigure[t=11]{ \includegraphics[scale=.2]{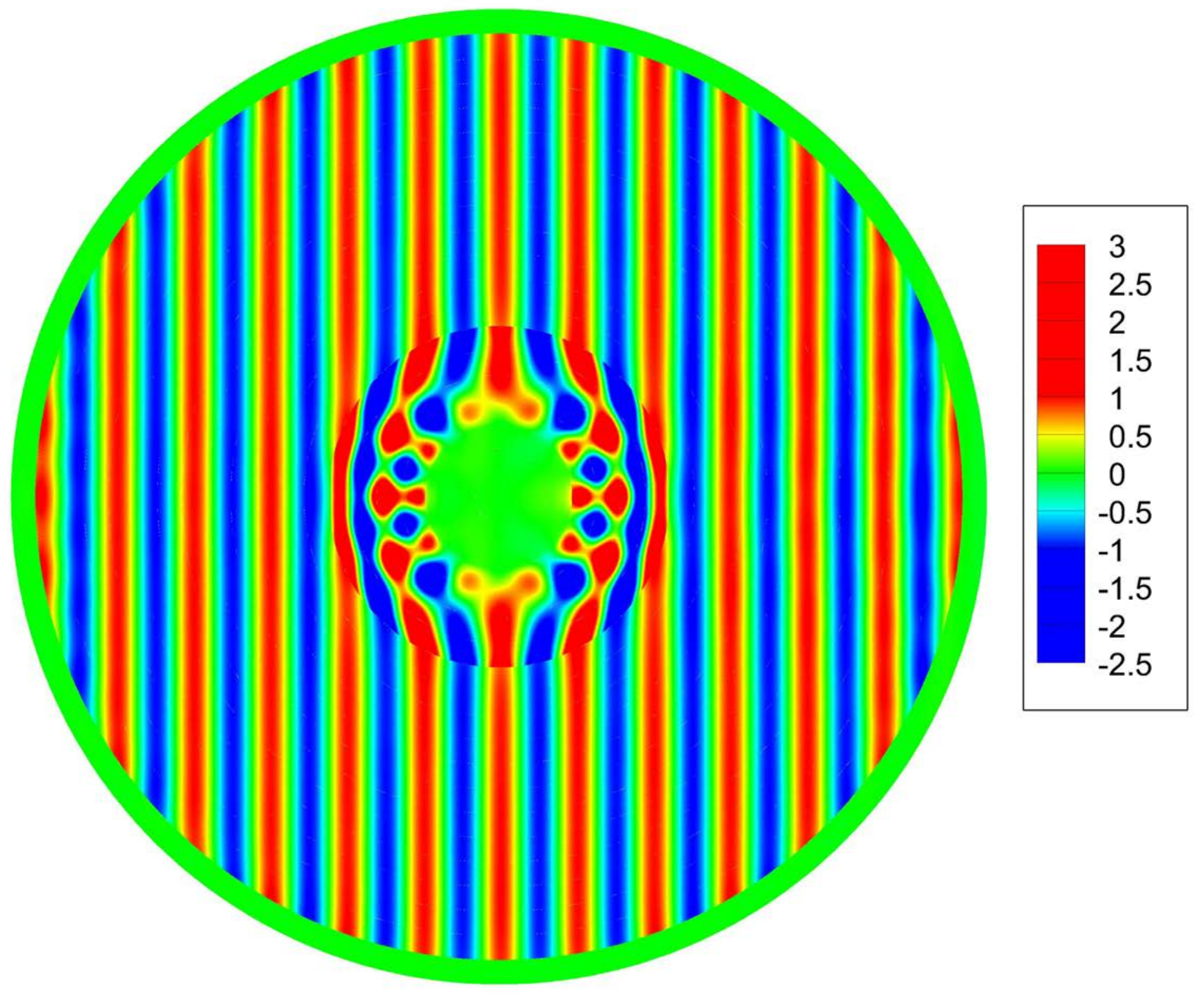}}
	\caption{\small Contours of the approximated $D_z$ in the $XY$ plane at different time steps with $R_2>2R_1$.}
	\label{example2fig}
\end{figure}

In the analysis of  \cite{Li2014wellpose}, a constraint $R_2\geq 2R_1$ is assumed. It was  pointed out  that it was unsure if this constraint is necessary and no numerical results regarding the case $R_2<2R_1$ were presented therein. Here, we shall do some numerical test for the case $R_2<2R_1$. For this purpose, we set $R_1=0.15$, $R_2=0.25<2R_1$ and plot the contours of $D_z$ at different time in Figure \ref{SemC}. It shows that the cloak works as well as in the case $R_2>2R_1$.
\begin{figure}[ht!]
	{~}\hspace*{-16pt}
	\centering
	\subfigure[t=1]{\includegraphics[scale=.2]{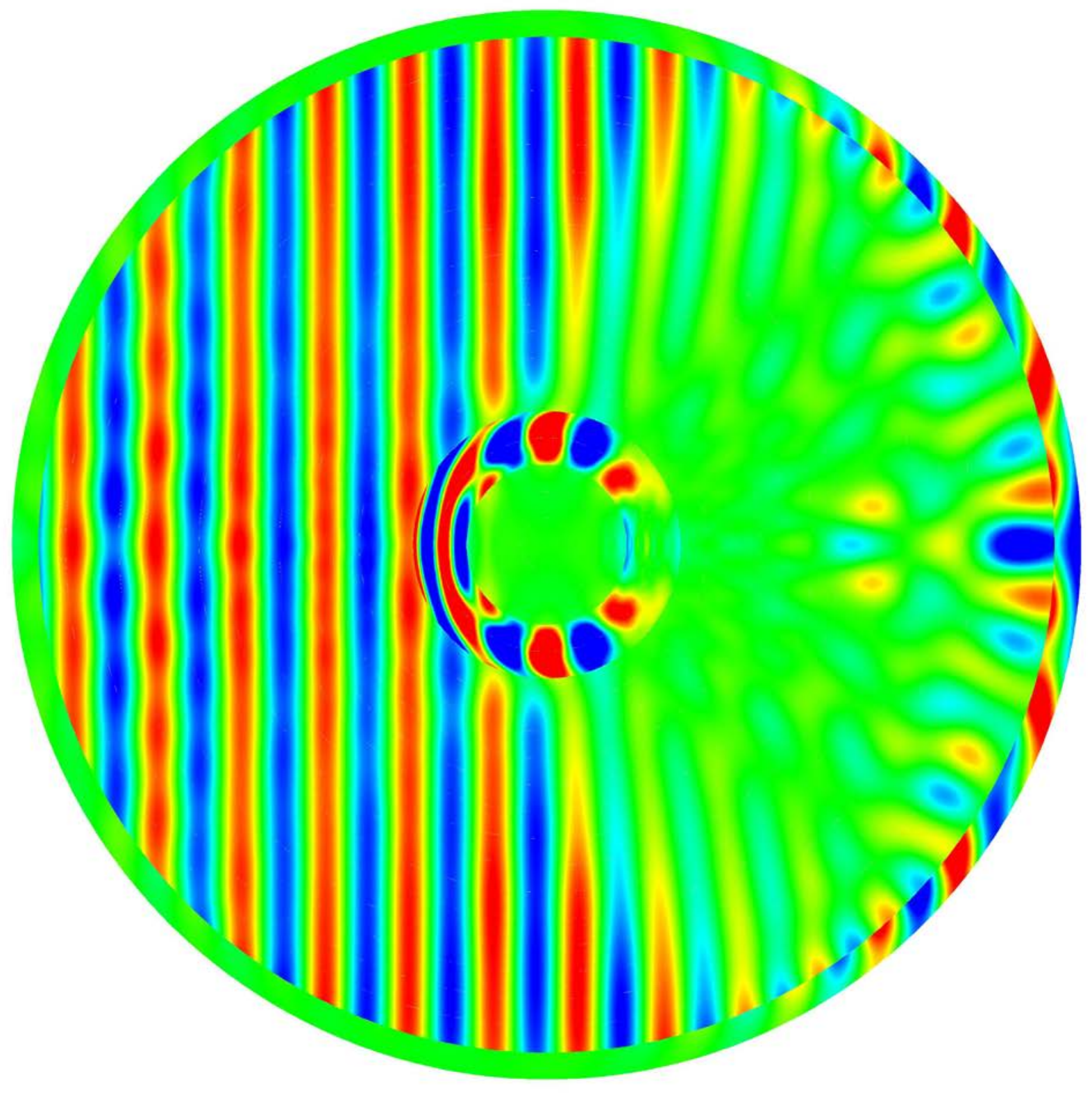}}
	\subfigure[t=3]{ \includegraphics[scale=.2]{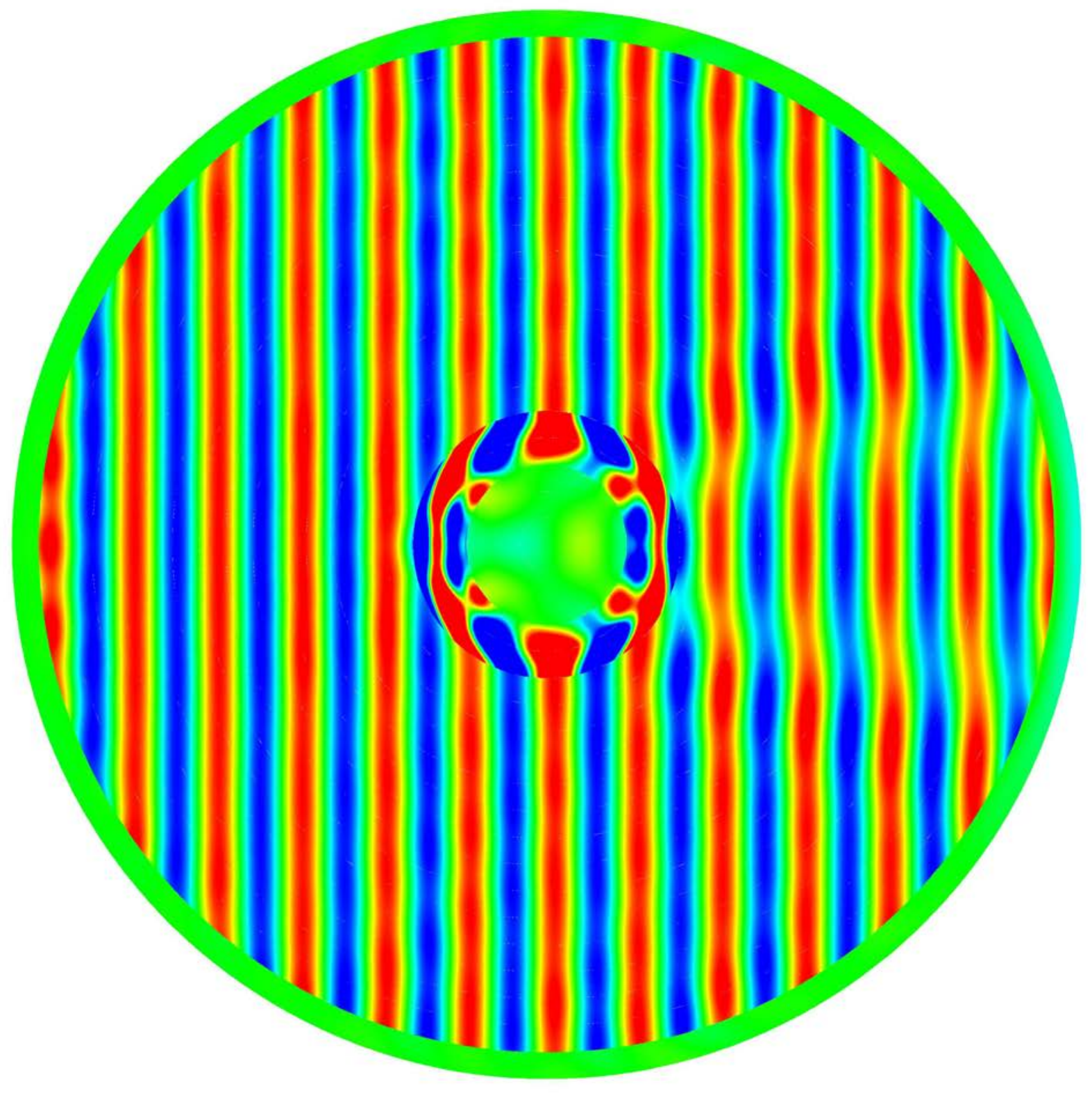}}
	\subfigure[t=5]{ \includegraphics[scale=.2]{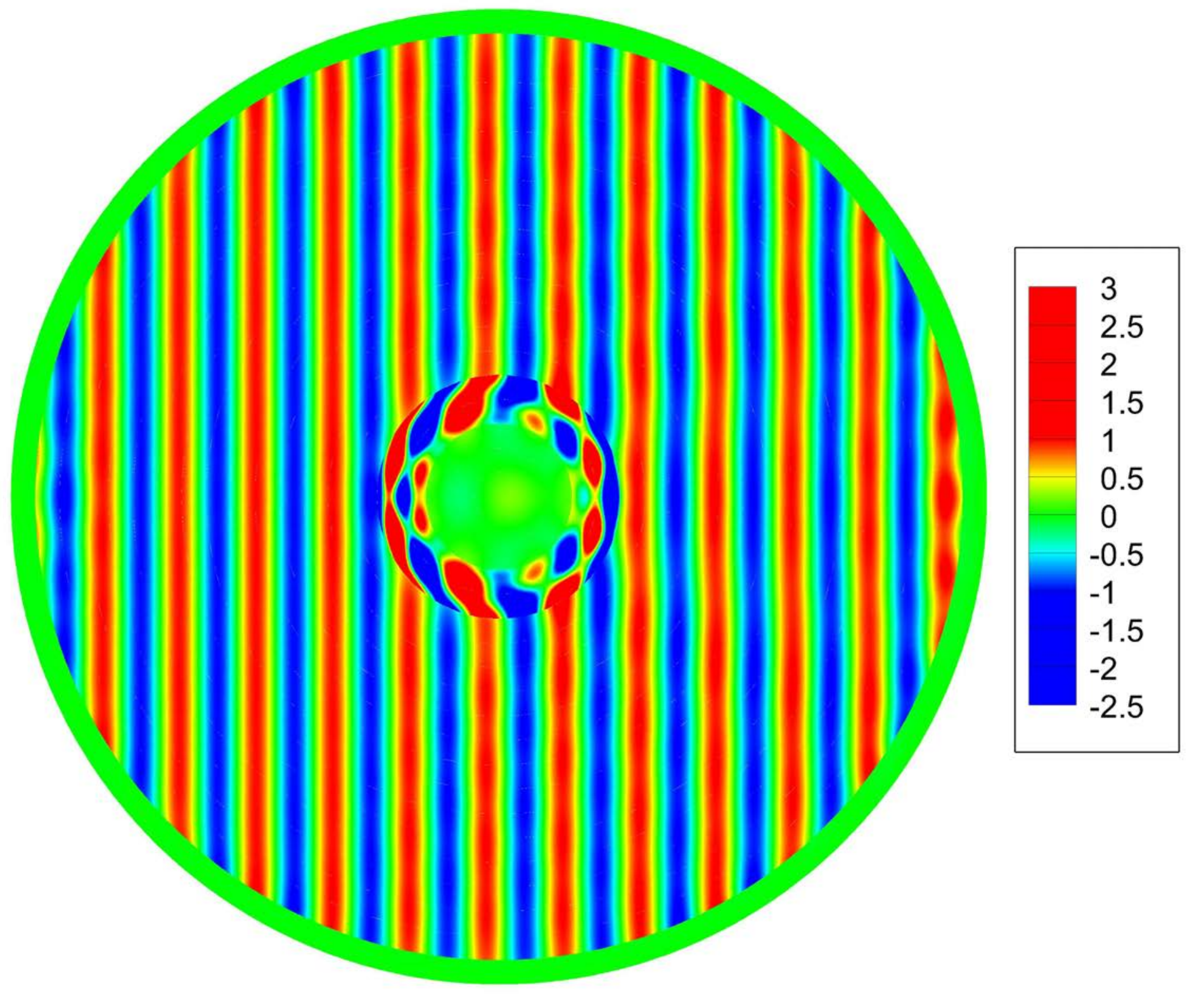}}\\
	\subfigure[t=7]{ \includegraphics[scale=.2]{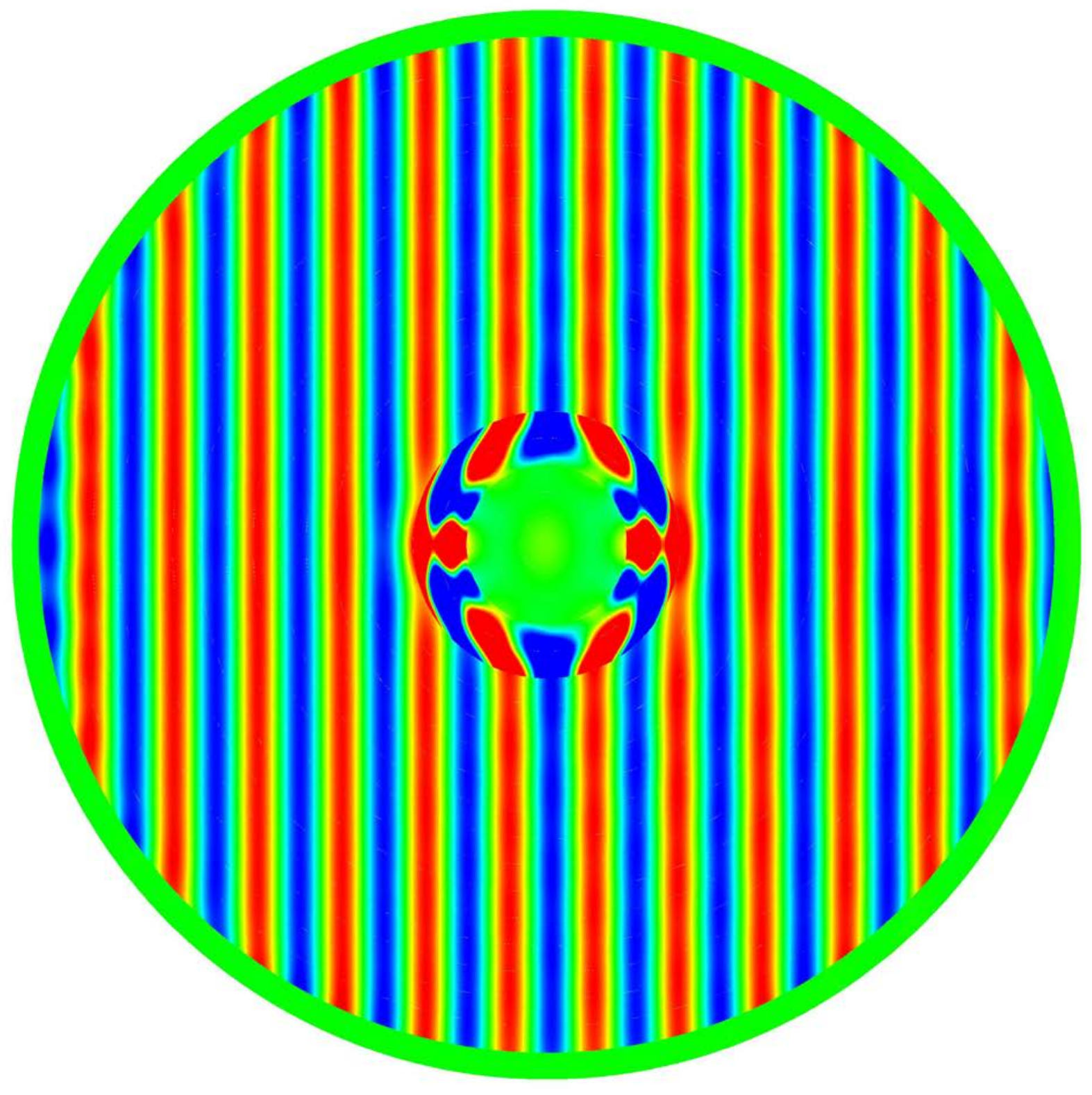}}
	\subfigure[t=9]{ \includegraphics[scale=.2]{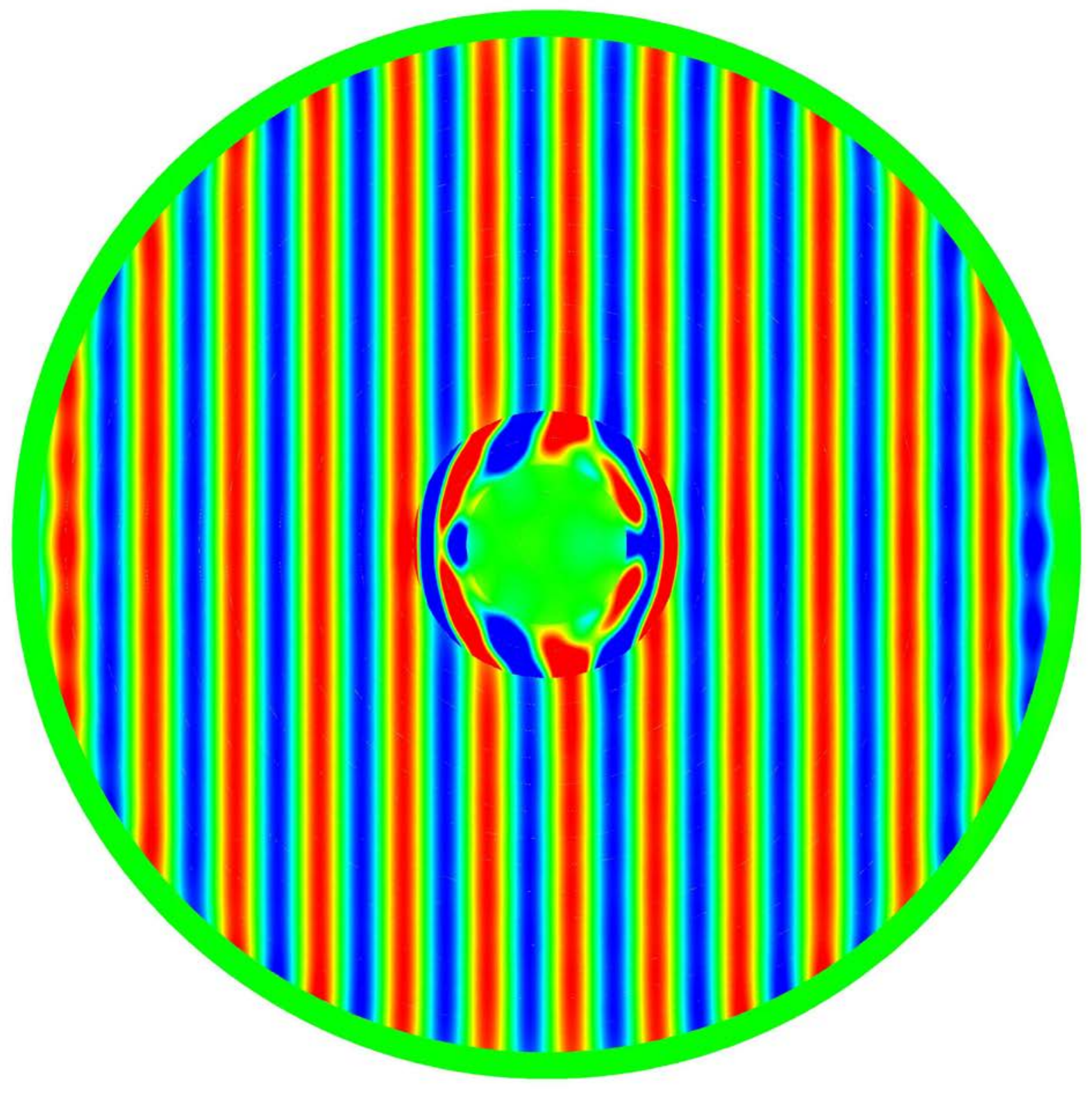}}
	\subfigure[t=11]{ \includegraphics[scale=.2]{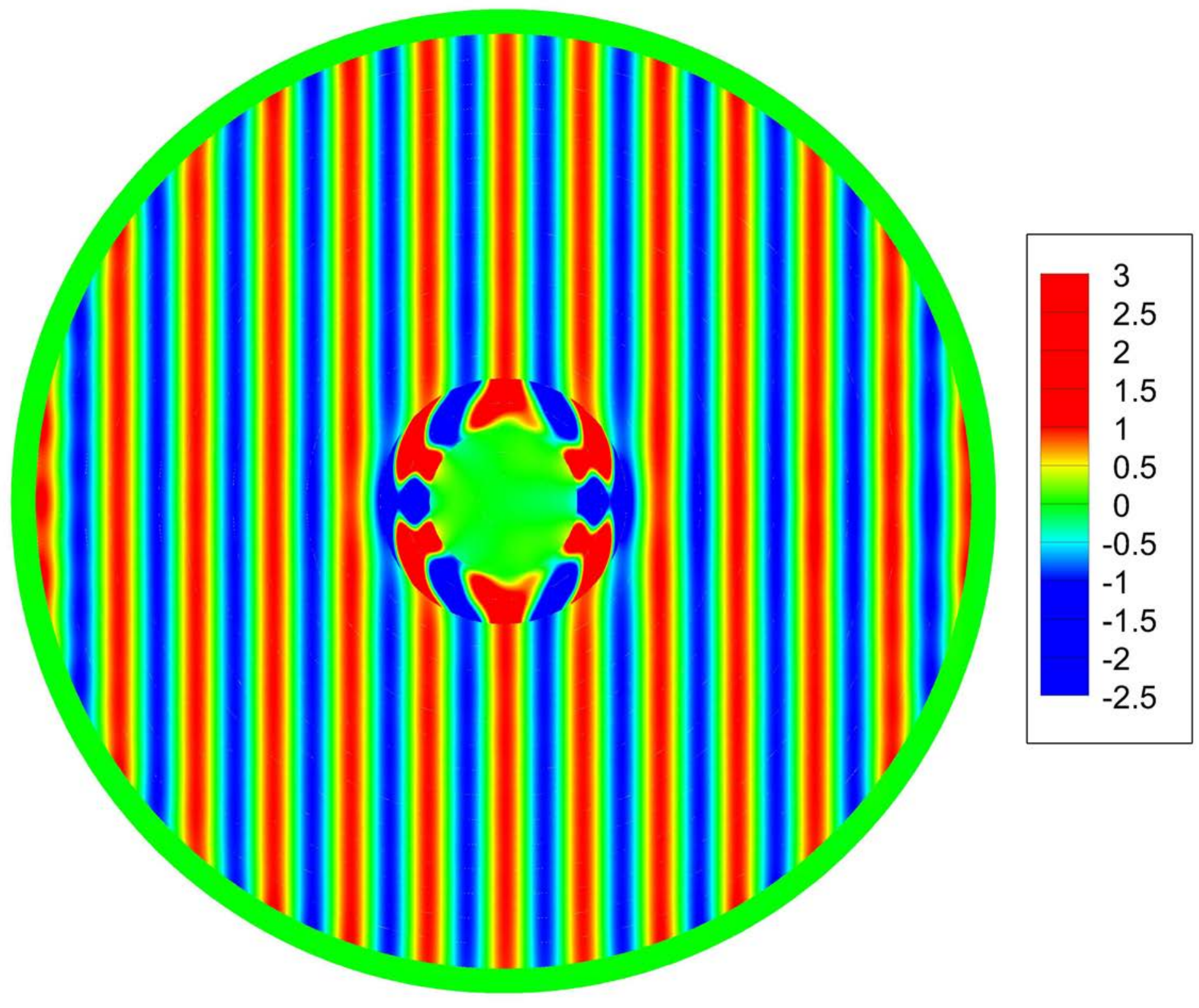}}
	\caption{\small Contours of the approximated $D_z$ in the $XY$ plane at different time steps with $R_2<2R_1$.}
	\label{SemC}
\end{figure}
\begin{figure}[ht!]
	{~}\hspace*{-16pt}
	\centering
	\subfigure[t=1]{\includegraphics[scale=.2]{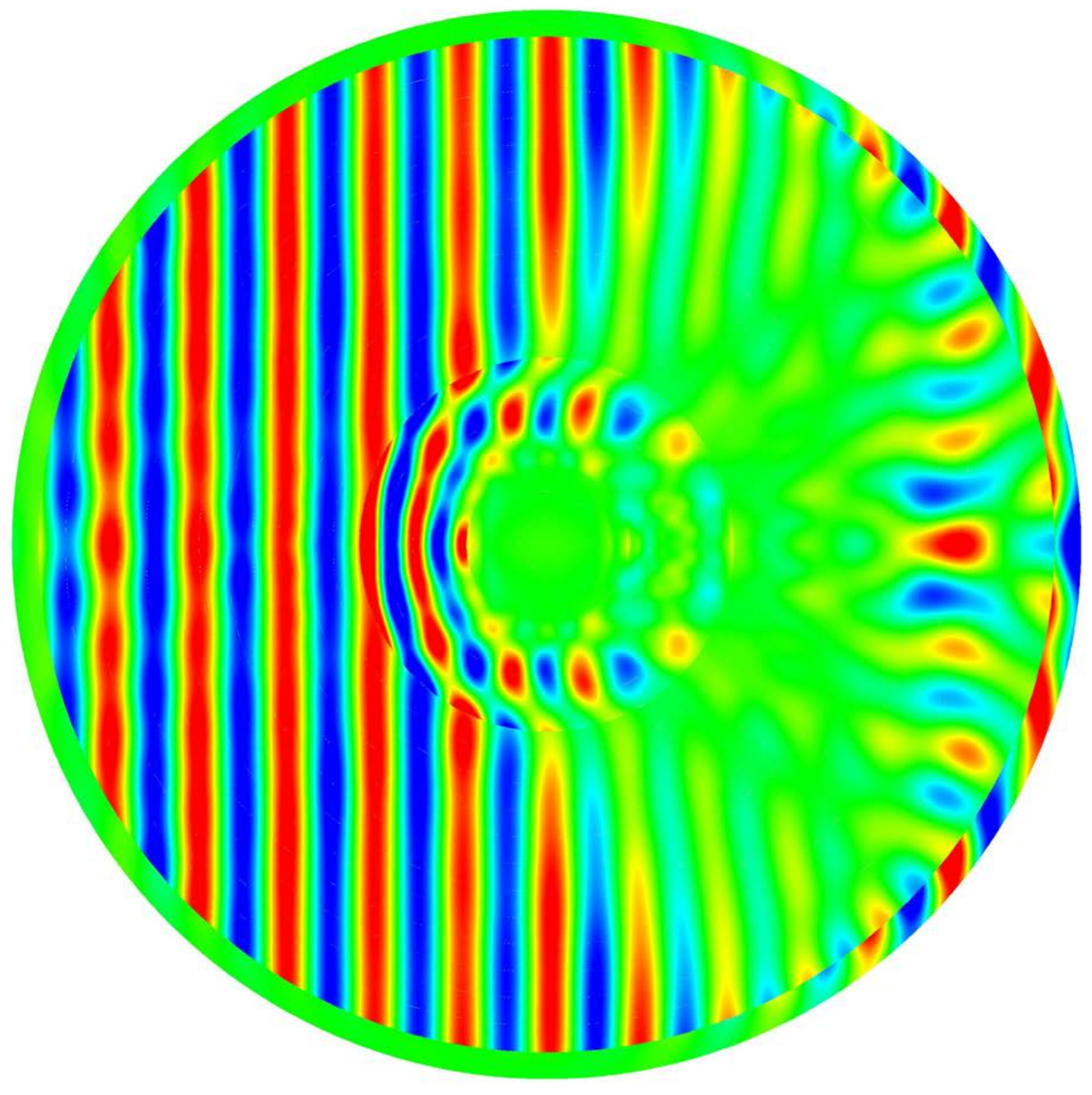}}
	\subfigure[t=3]{ \includegraphics[scale=.2]{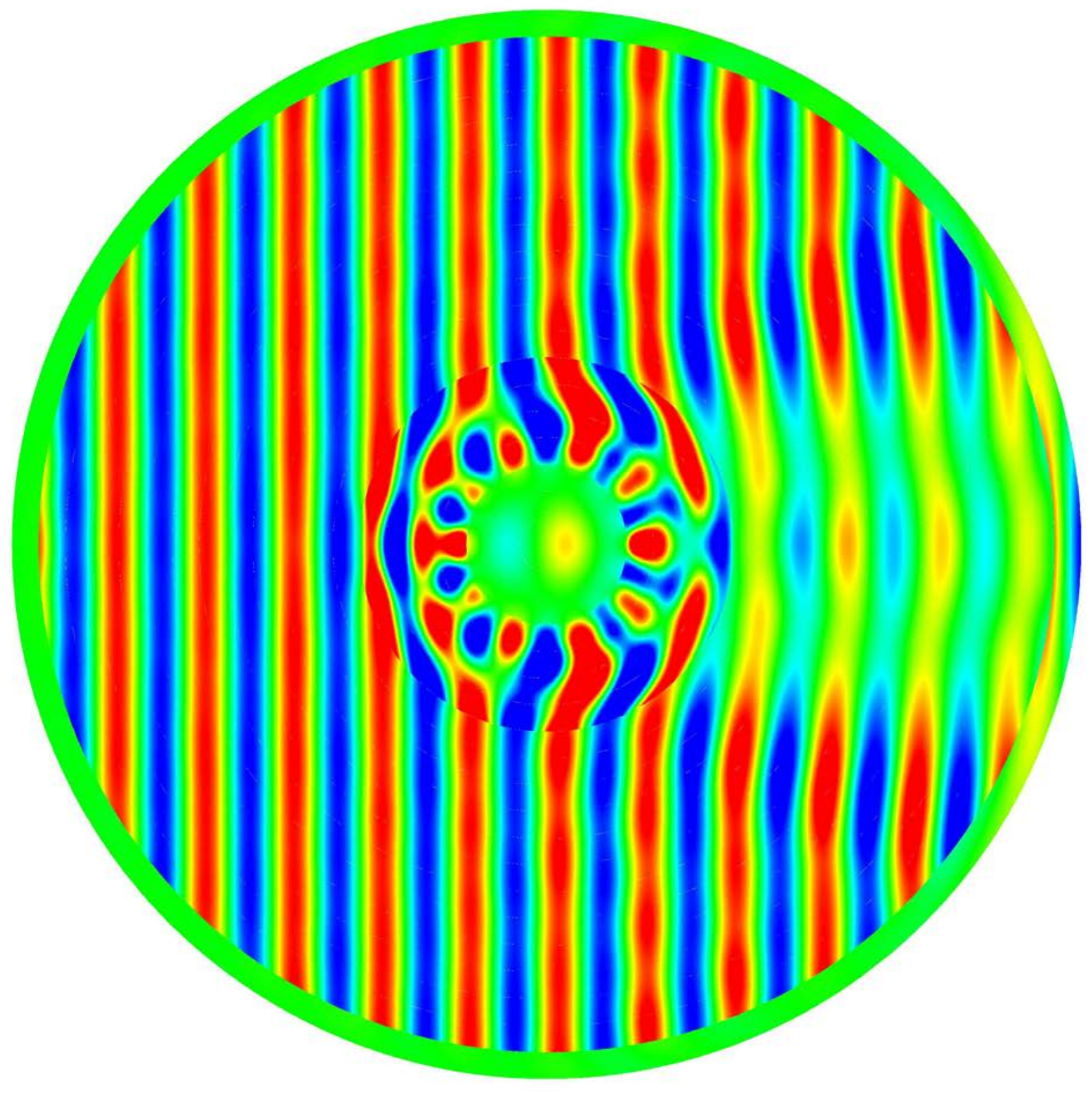}}
	\subfigure[t=5]{ \includegraphics[scale=.2]{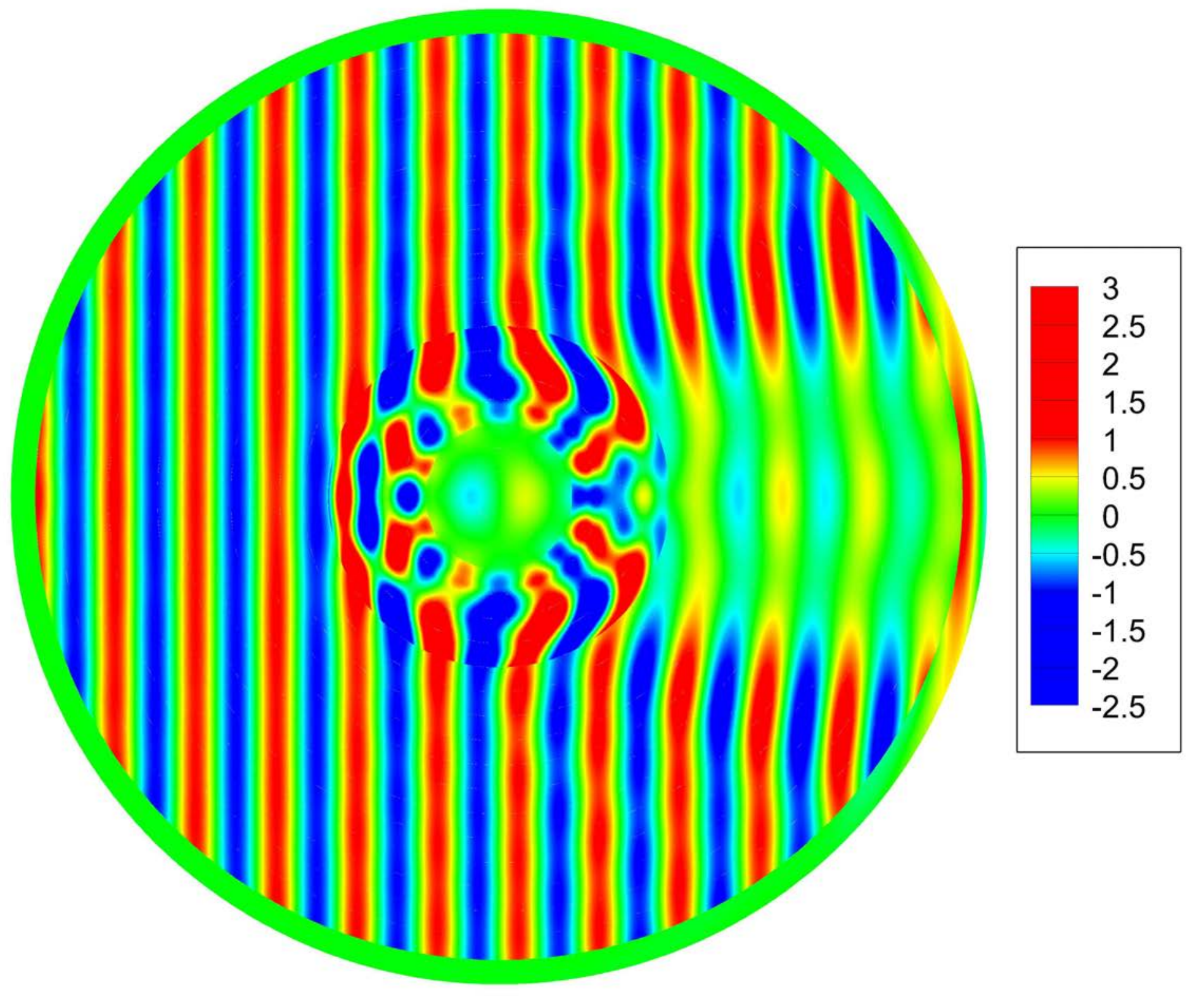}}\\
	\subfigure[t=7]{ \includegraphics[scale=.2]{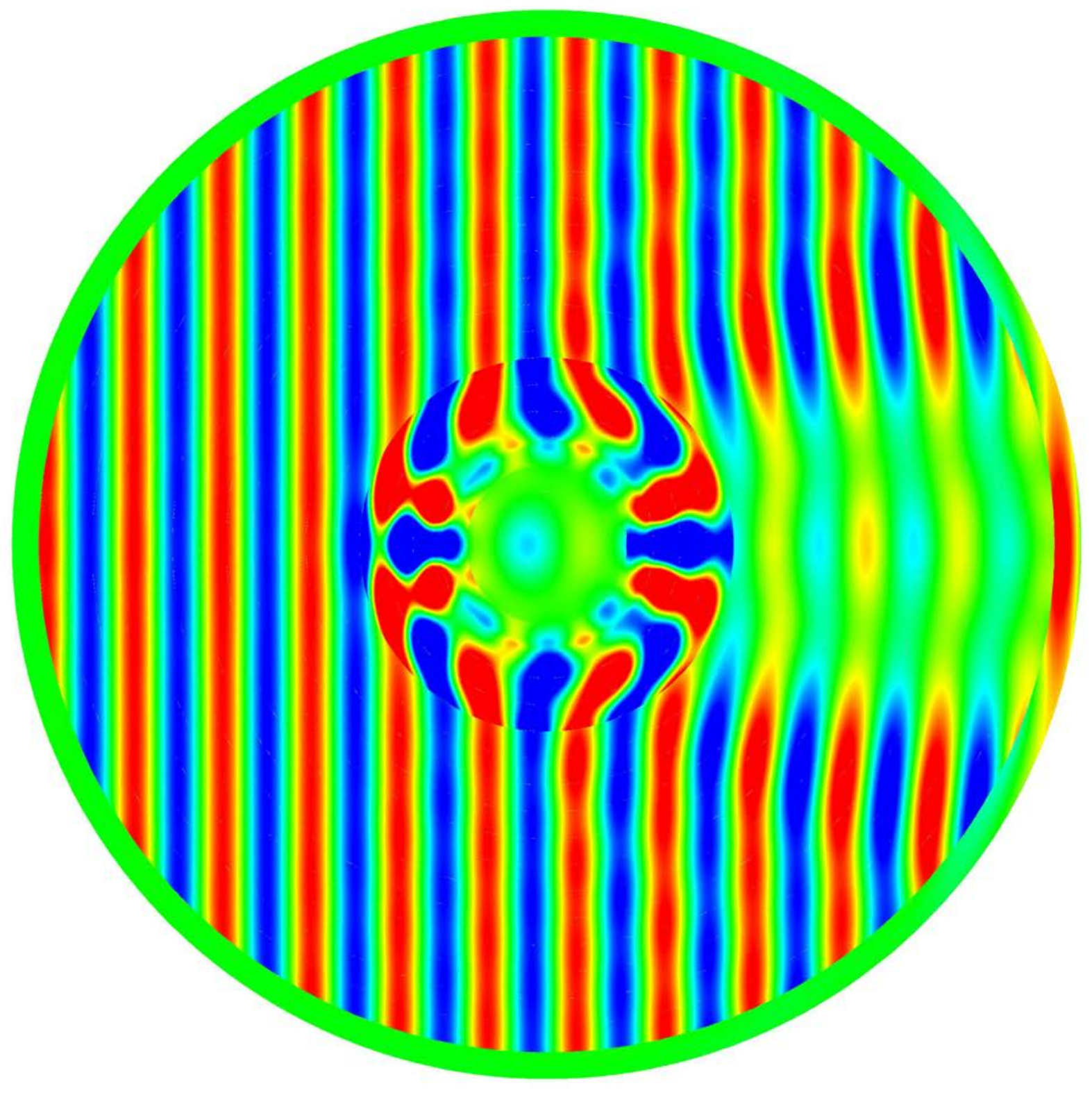}}
	\subfigure[t=9]{ \includegraphics[scale=.2]{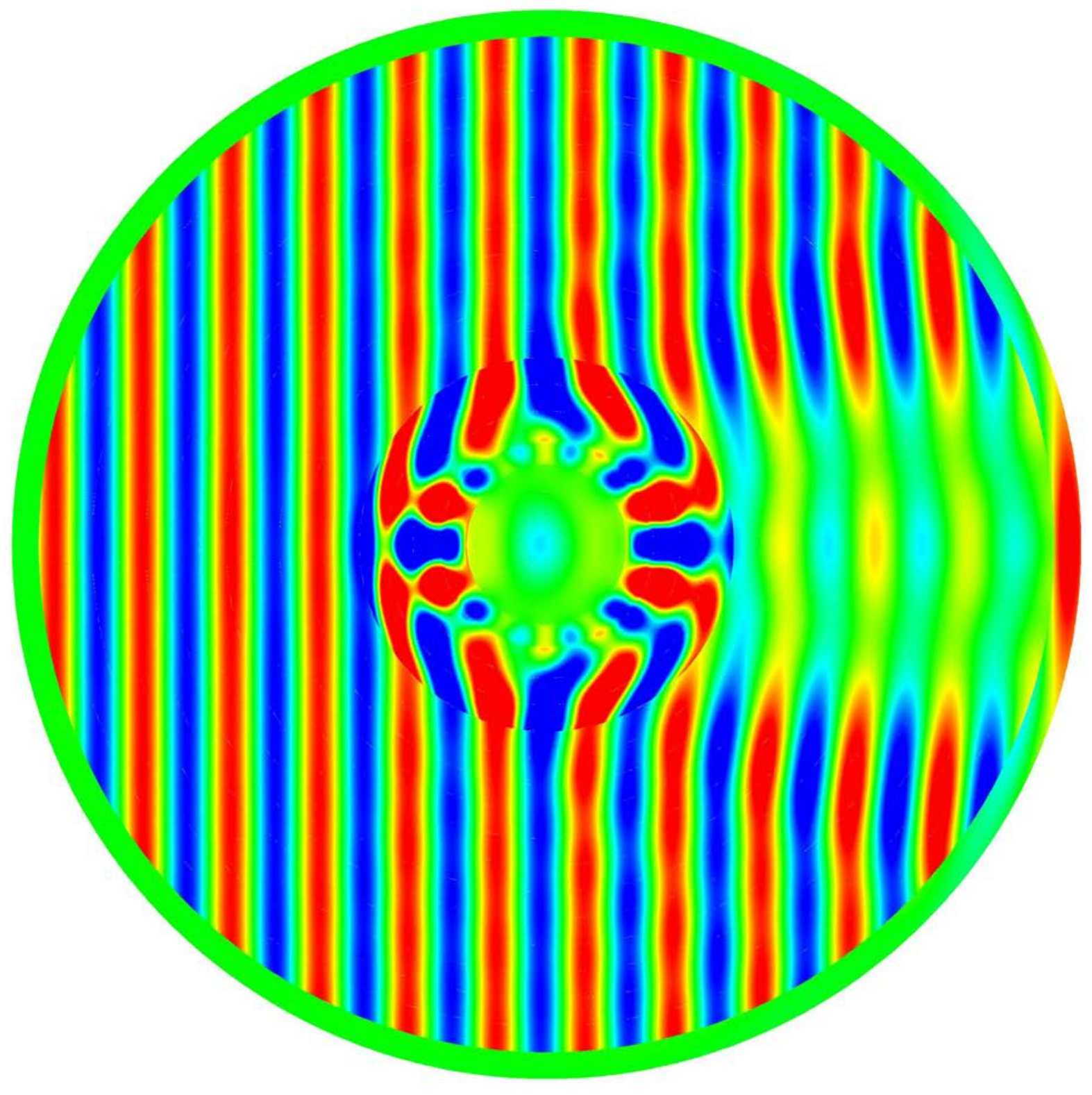}}
	\subfigure[t=11]{ \includegraphics[scale=.2]{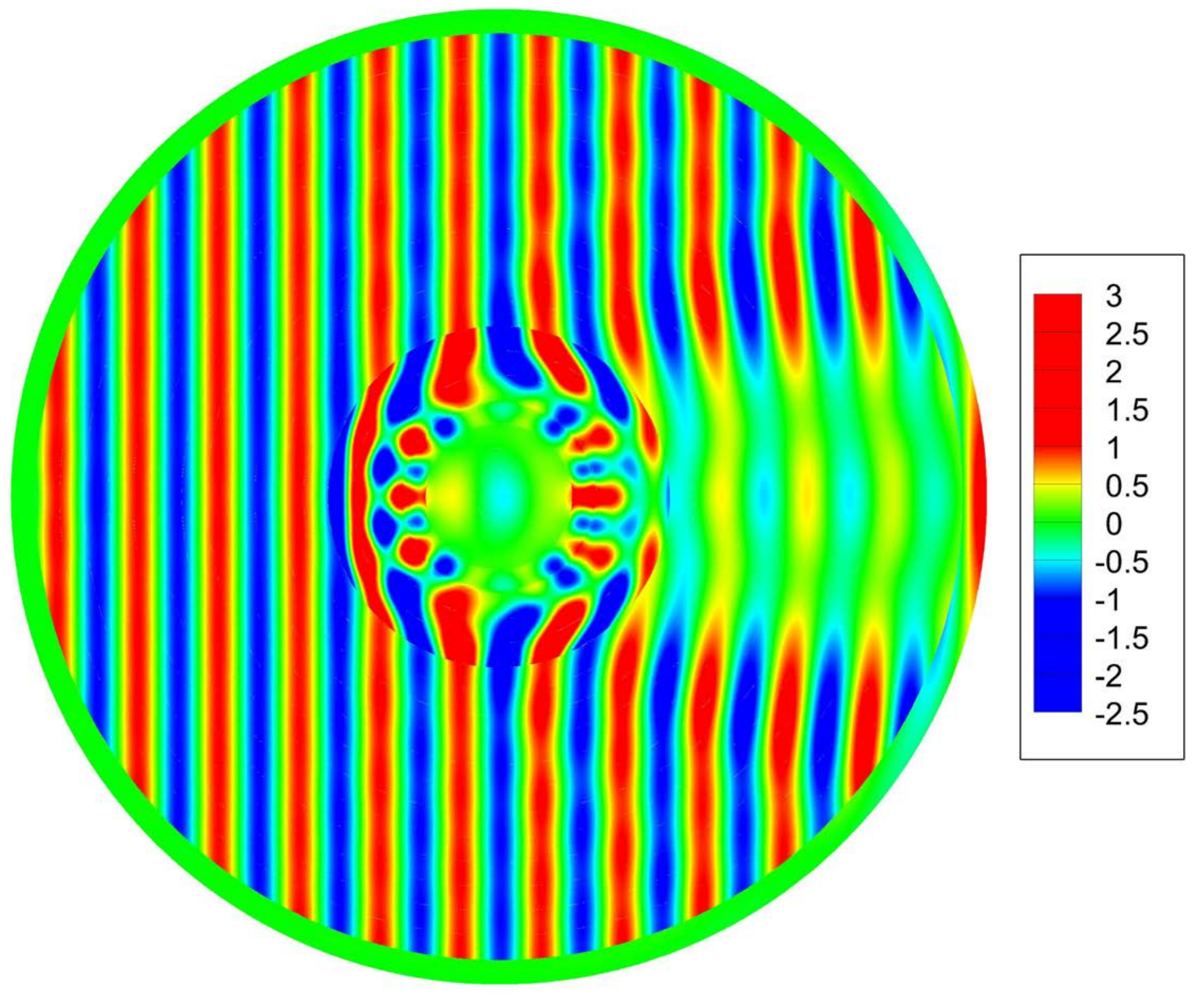}}
	\caption{\small Contours of the approximated $D_z$ in the $XY$ plane at different time steps with $k=38\neq \omega_c$.}
	\label{example4fig}
\end{figure}

As discussed in \cite{zhao2009full}, the cloak is relatively sensitive to the frequency of the incident wave. Here we use the monochromatic incident wave \eqref{inc1} with $k=\omega=38$ to test the cloak device with parameters: $\omega_c=40$, $\gamma_1=\gamma_2=0.001$, $R_1=0.15$, $R_2=0.35$. The contours of the numerical $D_z$ at different time are plotted in Figure \ref{example4fig}. The numerical results show that there are waves propagating inside the cloaking  region.

\subsection{Polychromatic incident wave}
In this example, we use a pulse of plane wave given by
\begin{equation*}
\bs D^{\rm in}=\mathfrak{Re}\{e^{\ri k(x-t)}\}e^{-\frac{(x-t+t_c)^2}{q}}\bs A,\quad \bs A:=\begin{bmatrix}
0 & 0 & A
\end{bmatrix}^{\rm T},
\end{equation*}
as the incident wave with $A=1$, $k=40$, $t_c=4$ and $q=0.5$. We first consider the cloaking device with parameters given by $\omega_c=40$, $\gamma_1=\gamma_2=0.001$, $R_1=0.15$, $R_2=0.35>2R_1$. The contours of $D_z$ at different time are plotted in Figure \ref{planewavepulse1}. Then, the outer radius of the cloak is set to $R_2=0.25<2R_1$ and other parameters remain unchanged. The contours of $D_z$ at different time are plotted in Figure \ref{planewavepulse2}. In these tests, there are polychromatic EM waves interacting with the cloaking devices. We can see from the numerical results that there are waves propagating inside the cloaked region.
\begin{figure}[h!]
	\centering
	{~}\hspace*{-16pt} \subfigure[t=3]{ \includegraphics[scale=.18]{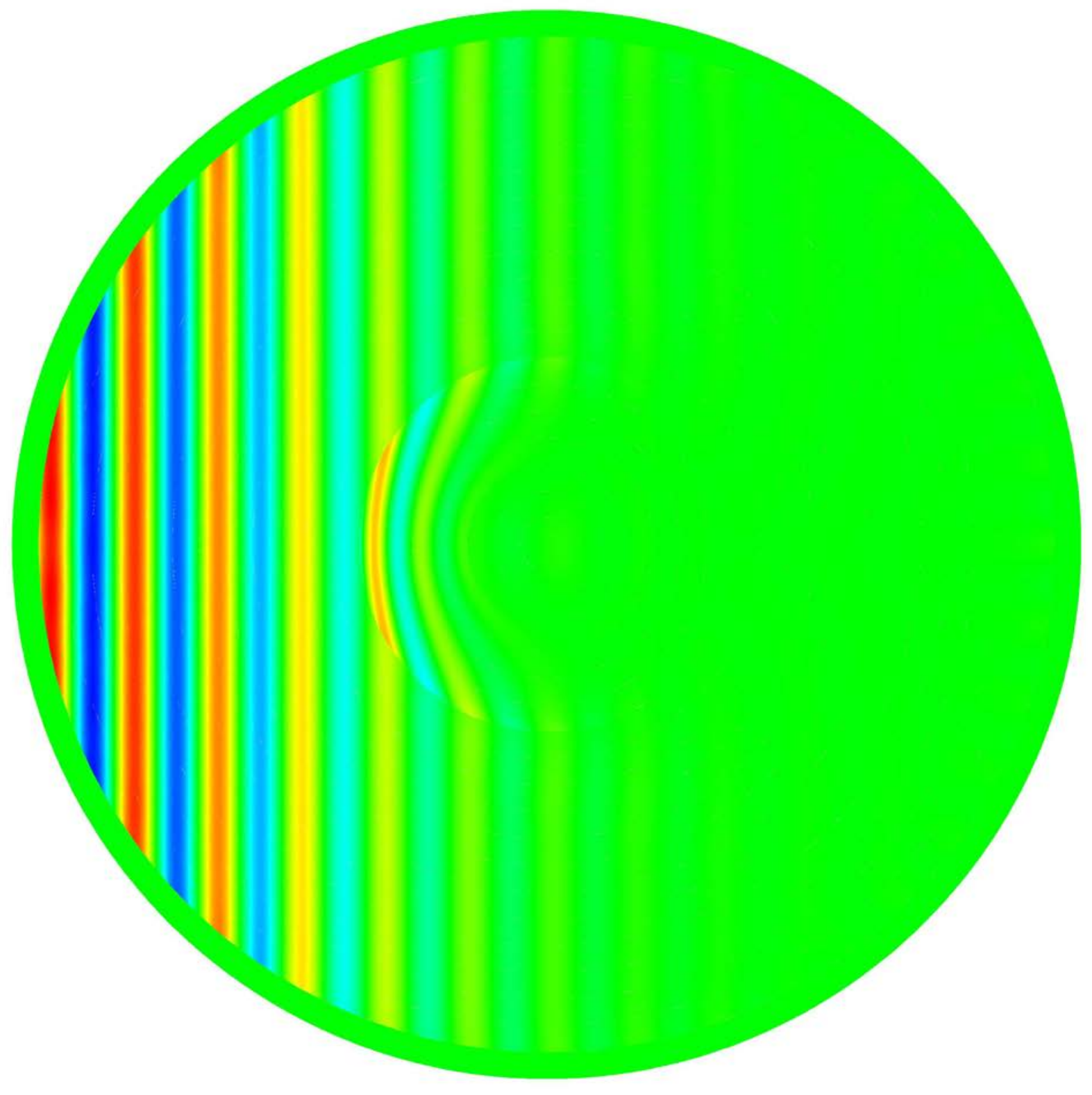}}
	\subfigure[t=3.5]{\includegraphics[scale=.18]{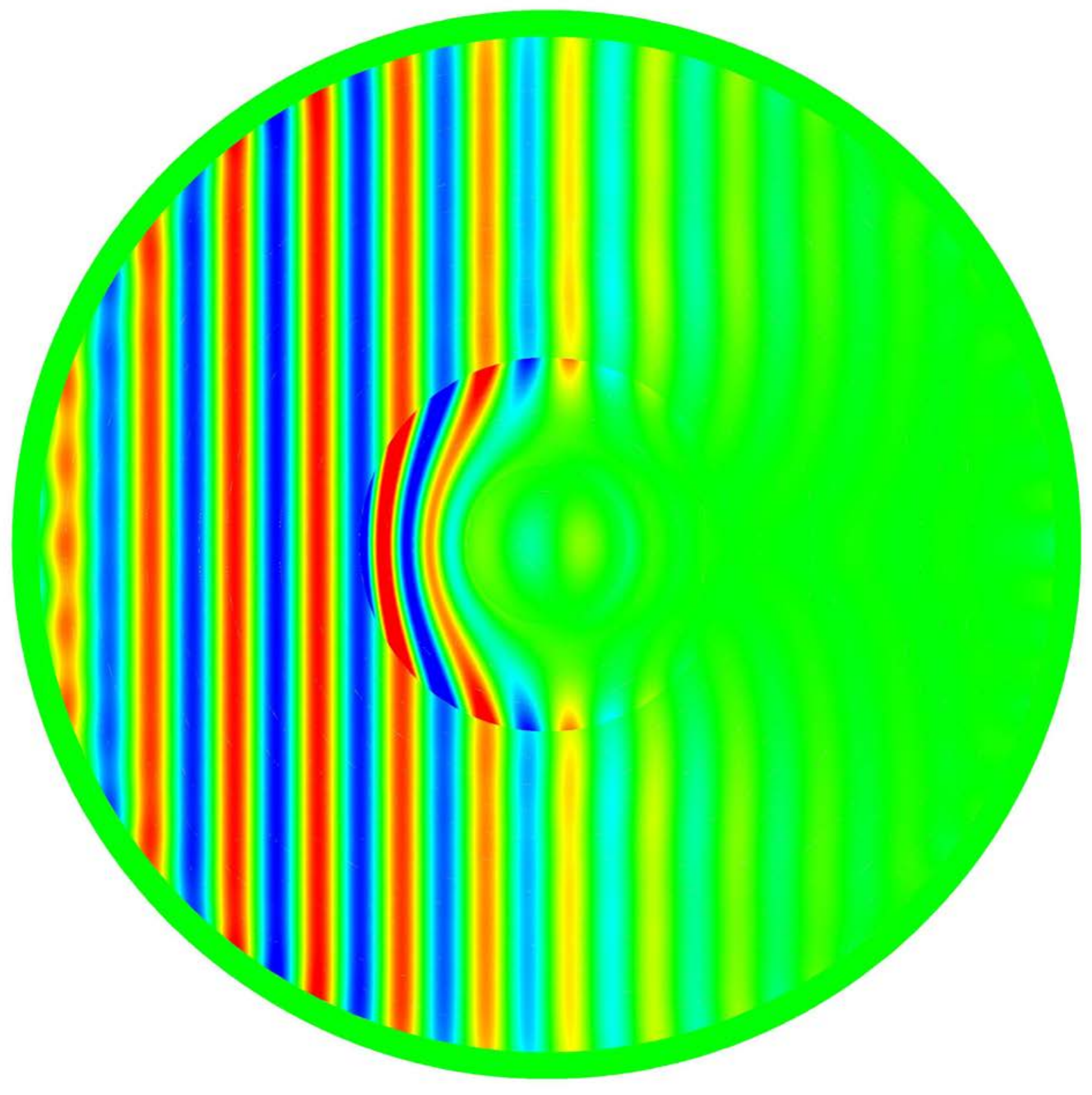}}
	\subfigure[t=4]{ \includegraphics[scale=.18]{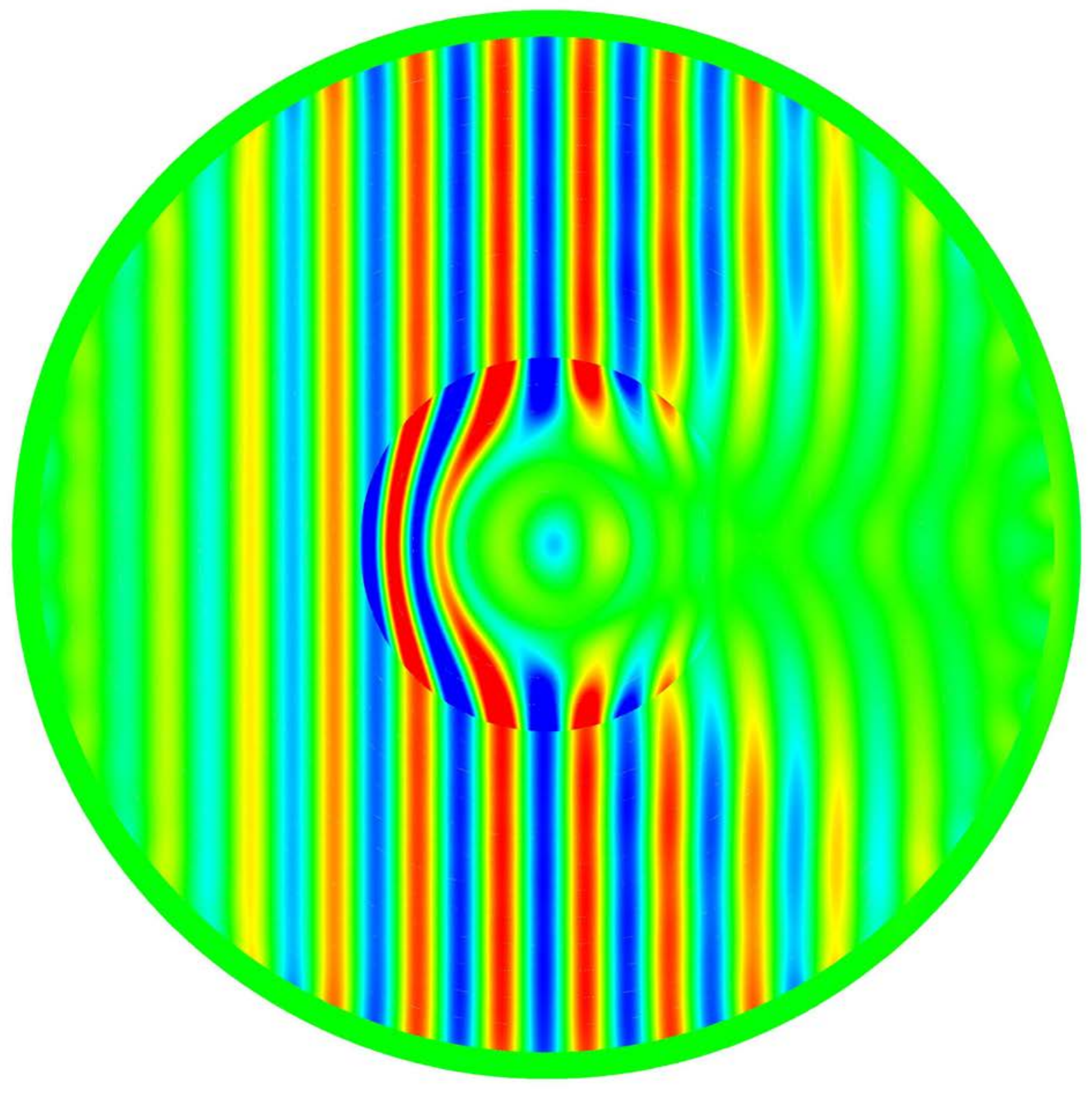}}
	\subfigure[t=4.5]{ \includegraphics[scale=.18]{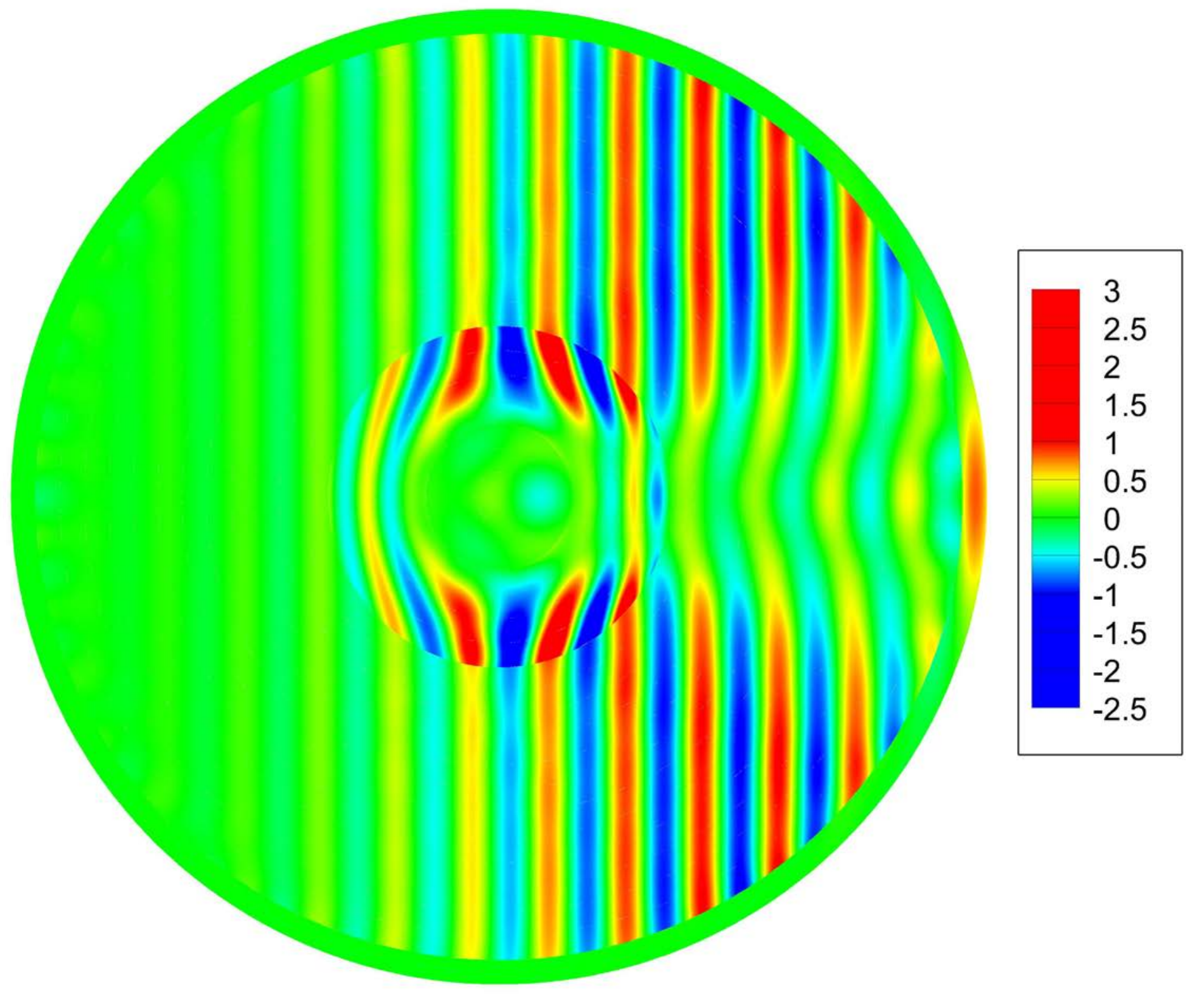}}
	\subfigure[t=5]{ \includegraphics[scale=.18]{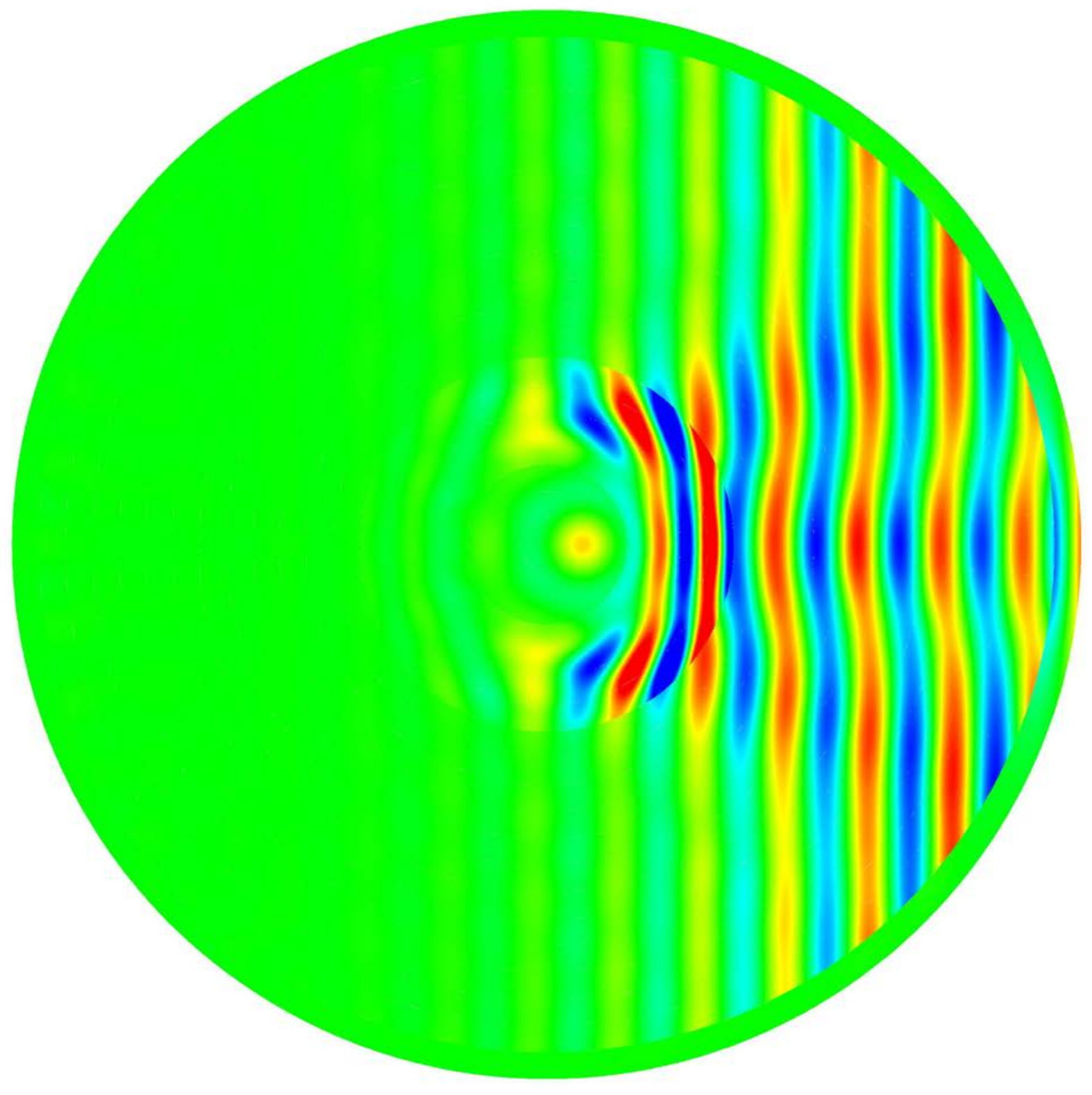}}
	\subfigure[t=5.5]{\includegraphics[scale=.18]{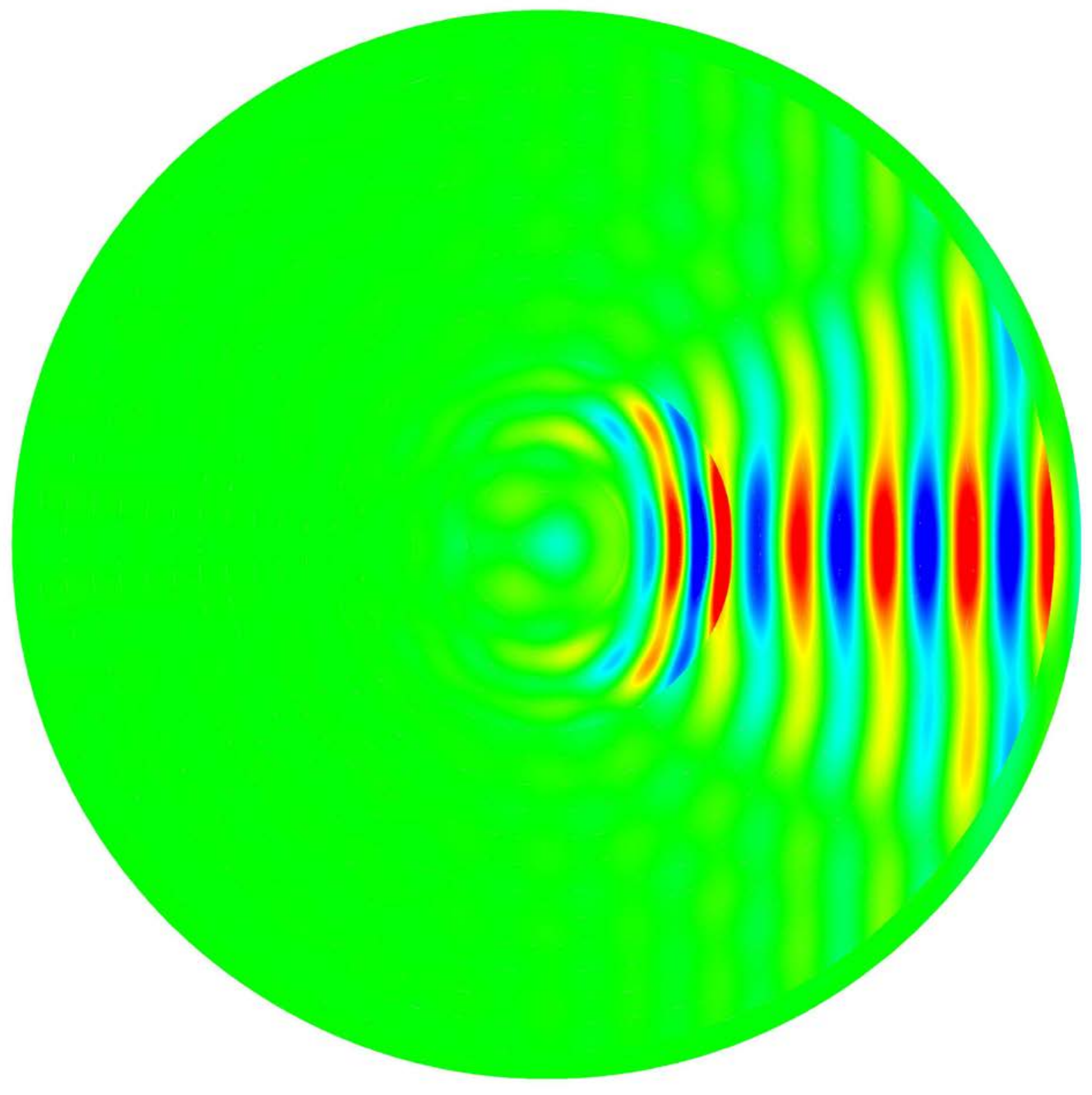}}
	\subfigure[t=6]{ \includegraphics[scale=.18]{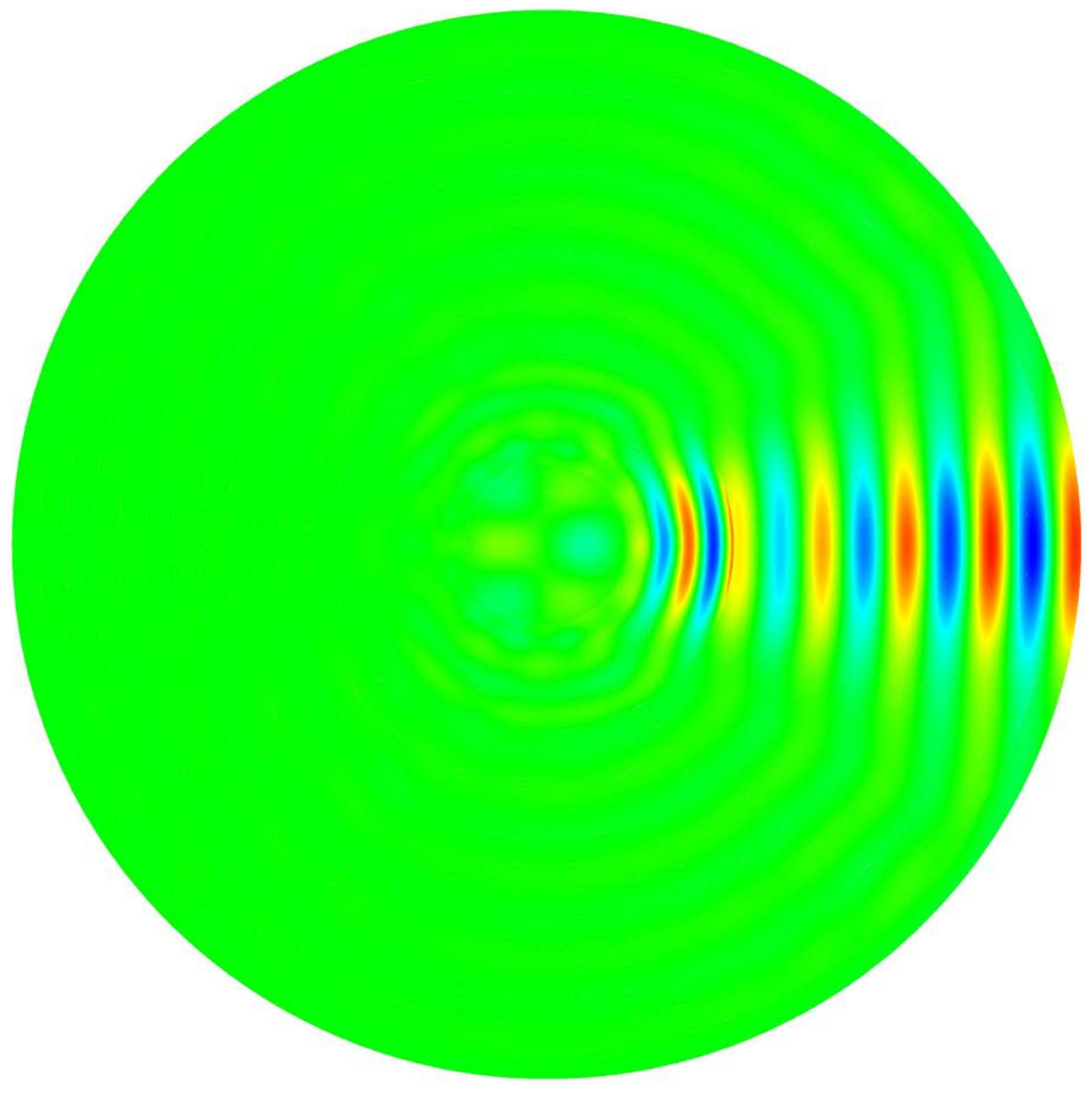}}
	\subfigure[t=6.5]{ \includegraphics[scale=.18]{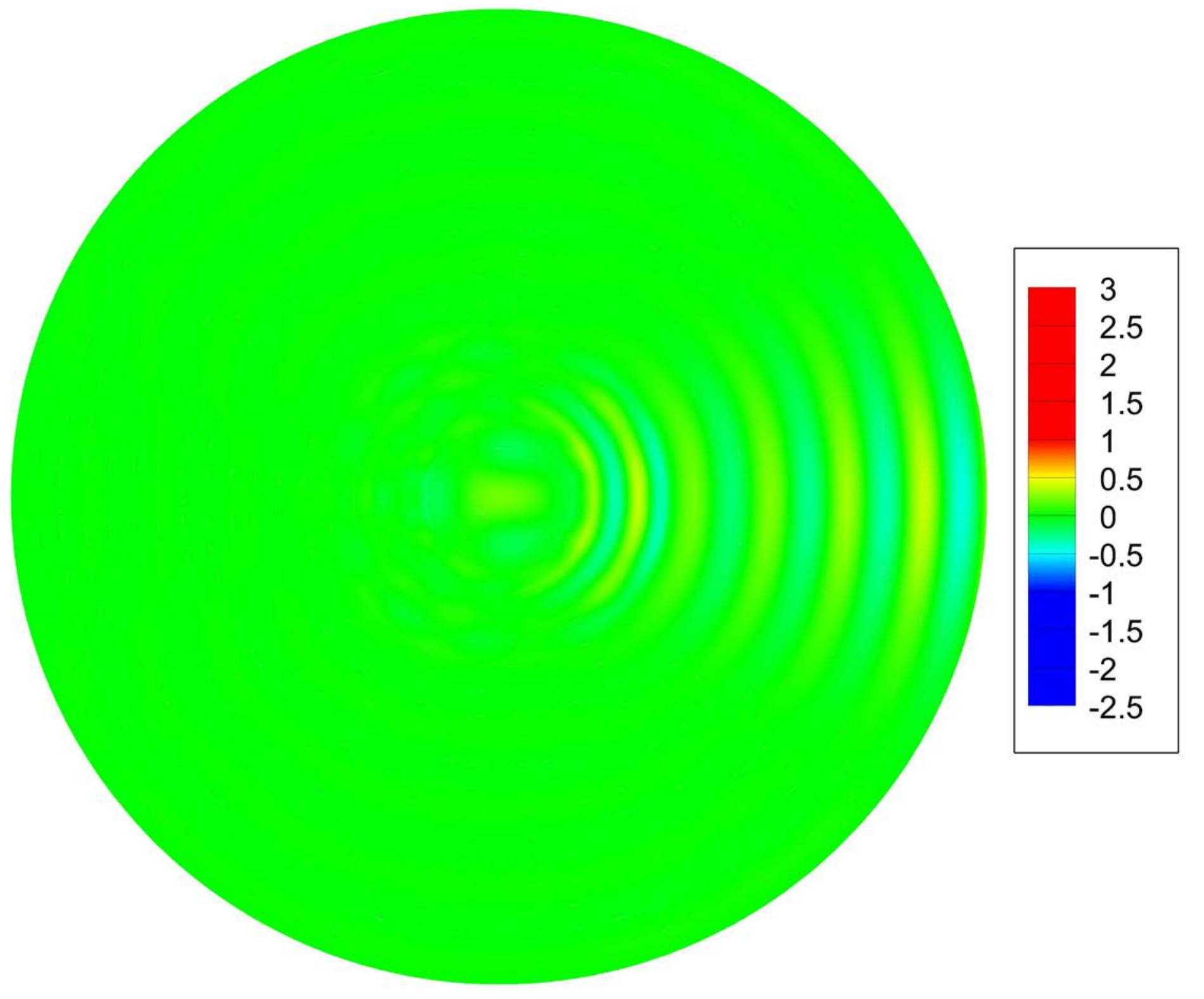}}
	\caption{\small Contours of the approximated $D_z$ in the $XY$ plane at different time steps with $R_2>2R_1$.}
	\label{planewavepulse1}
\end{figure}

\begin{figure}[h!]
	\centering
	{~}\hspace*{-16pt} \subfigure[t=3]{ \includegraphics[scale=.18]{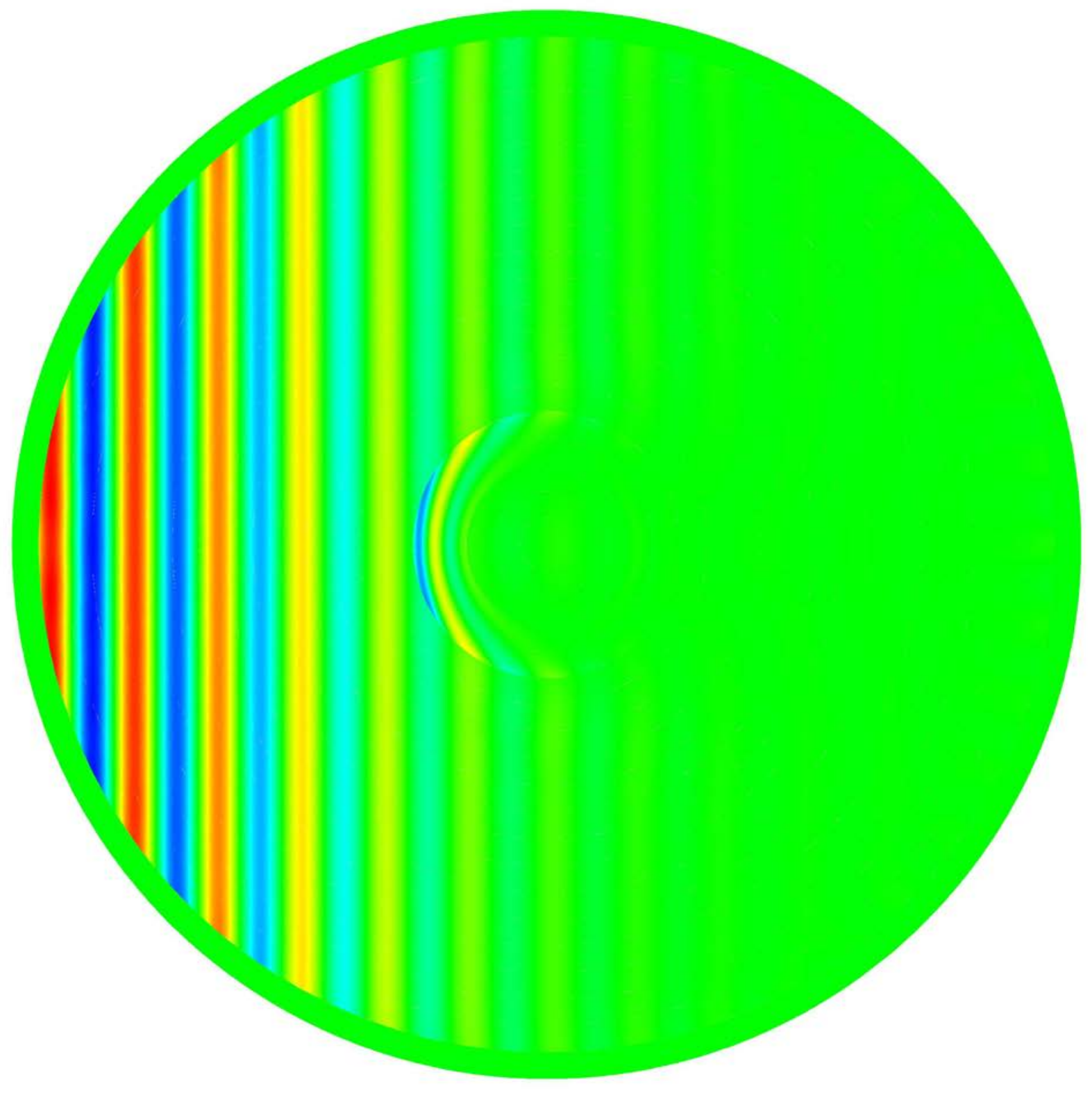}}
	\subfigure[t=3.5]{\includegraphics[scale=.18]{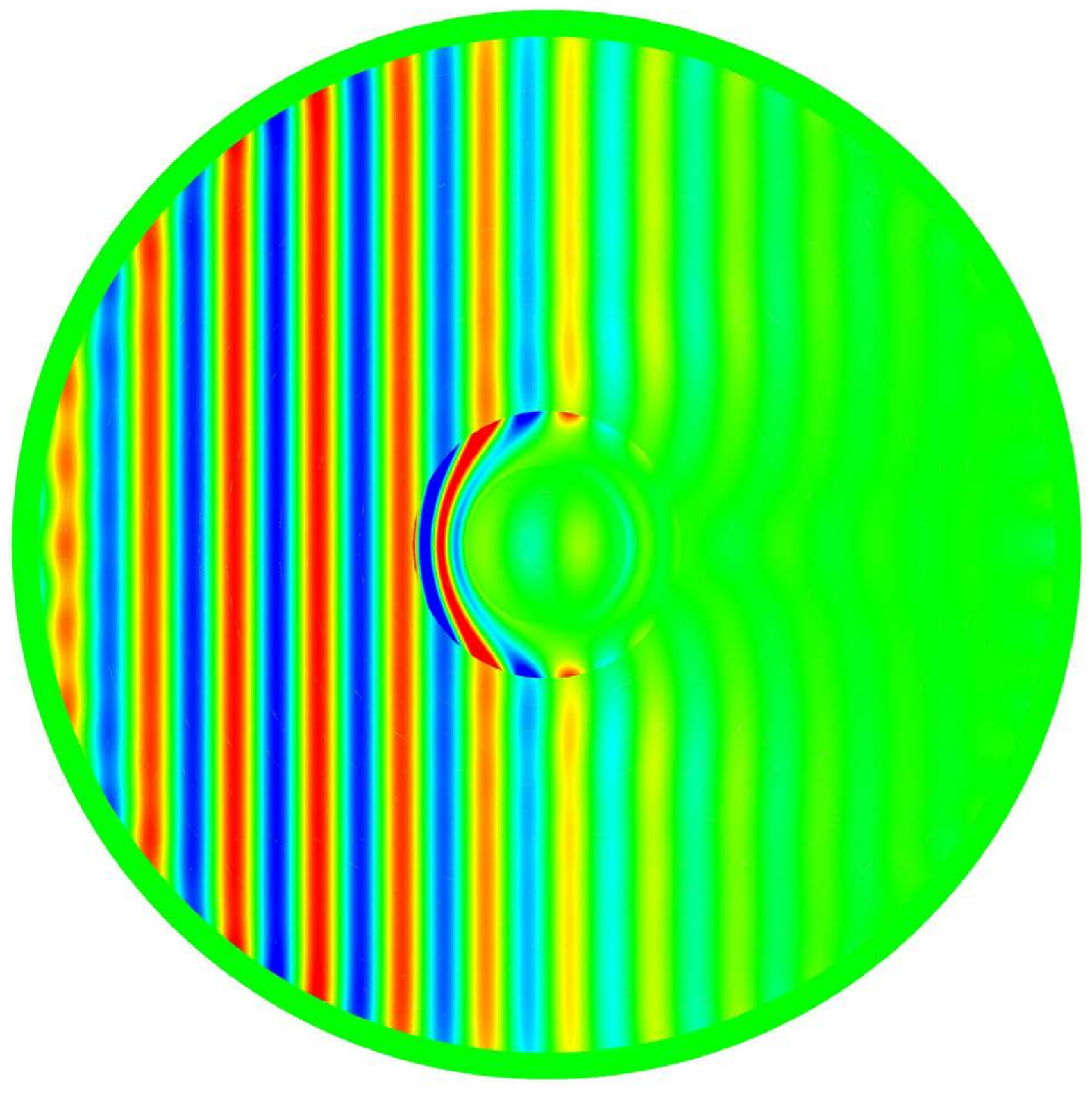}}
	\subfigure[t=4]{ \includegraphics[scale=.18]{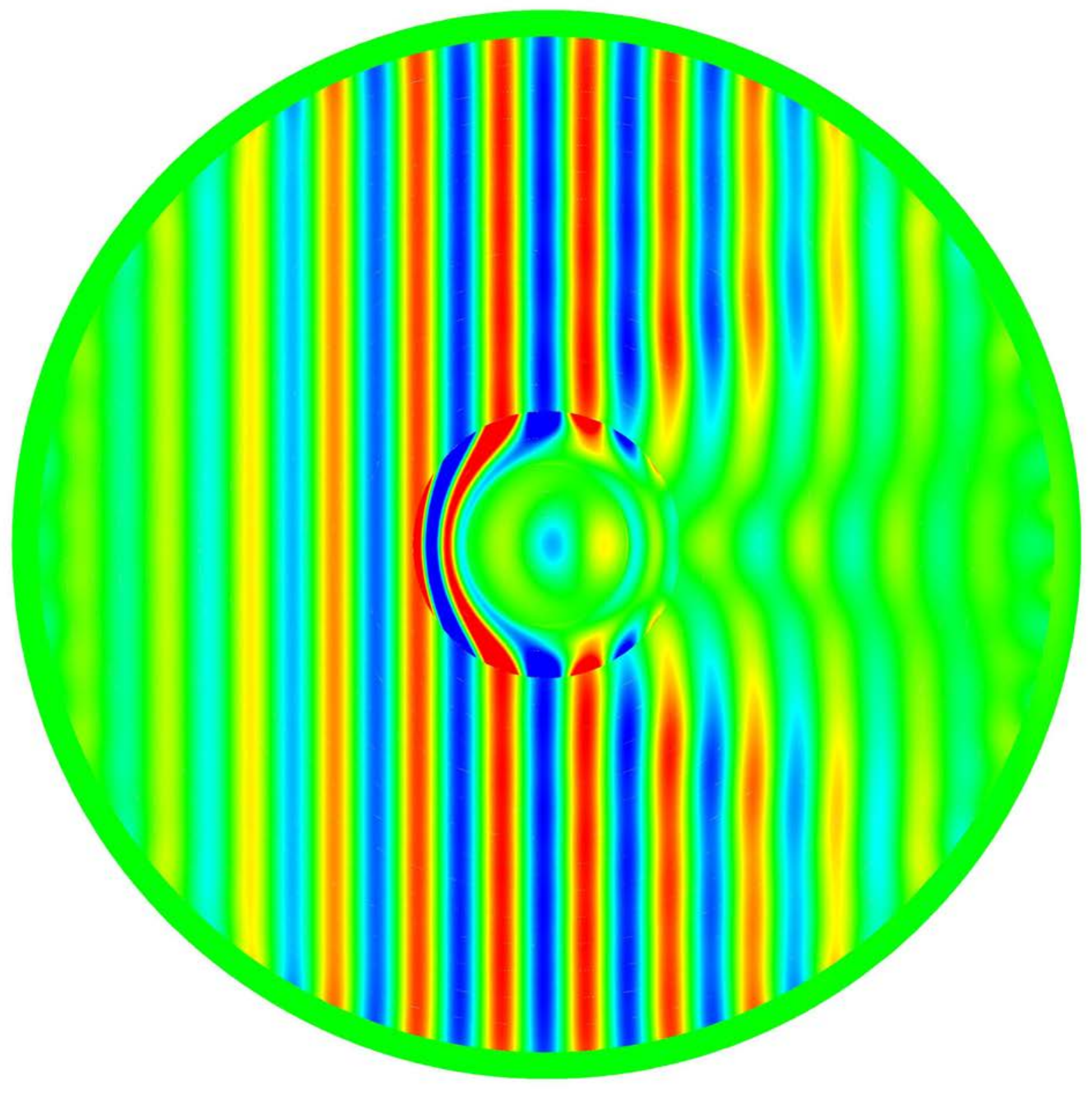}}
	\subfigure[t=4.5]{ \includegraphics[scale=.18]{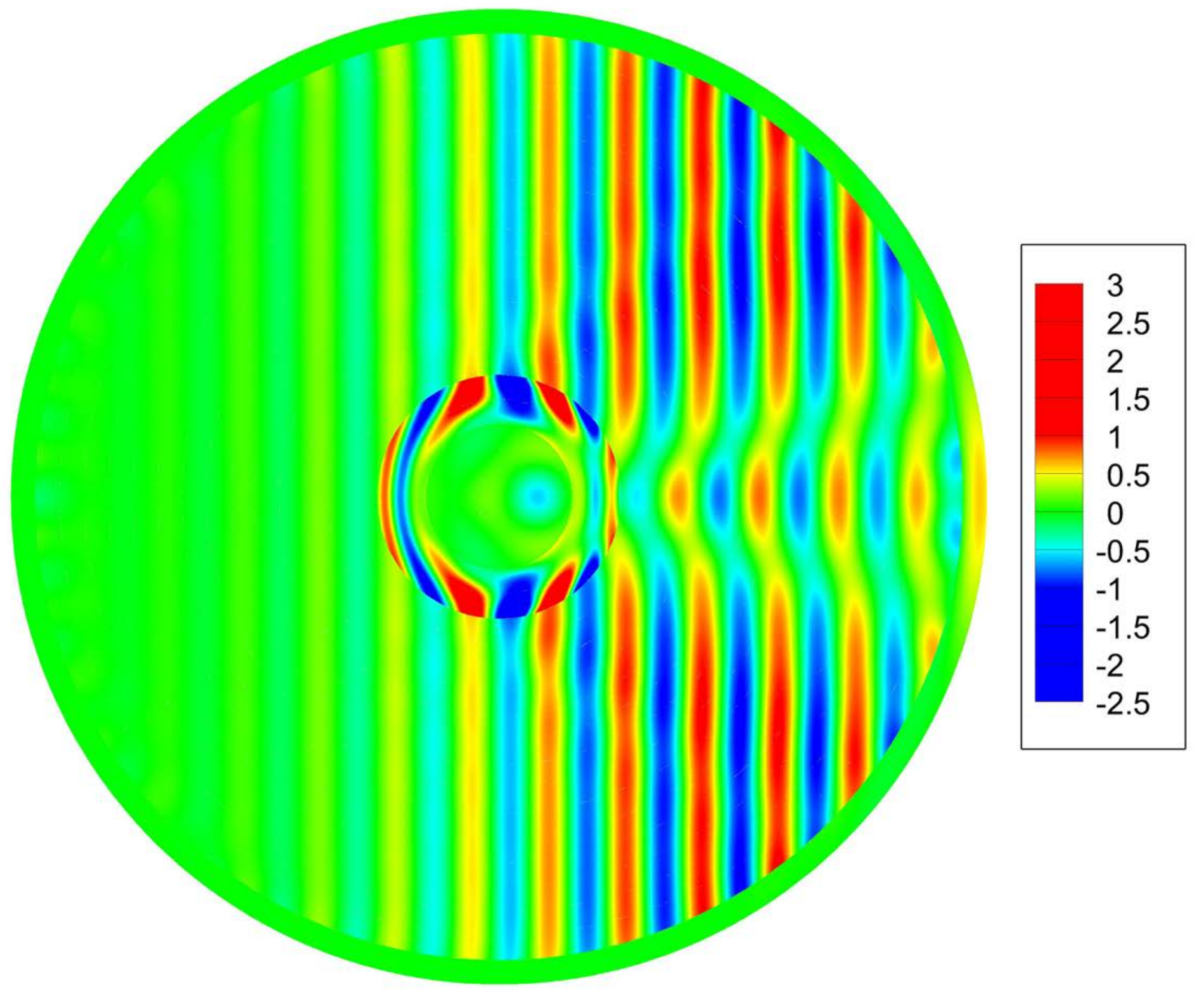}}
	\subfigure[t=5]{ \includegraphics[scale=.18]{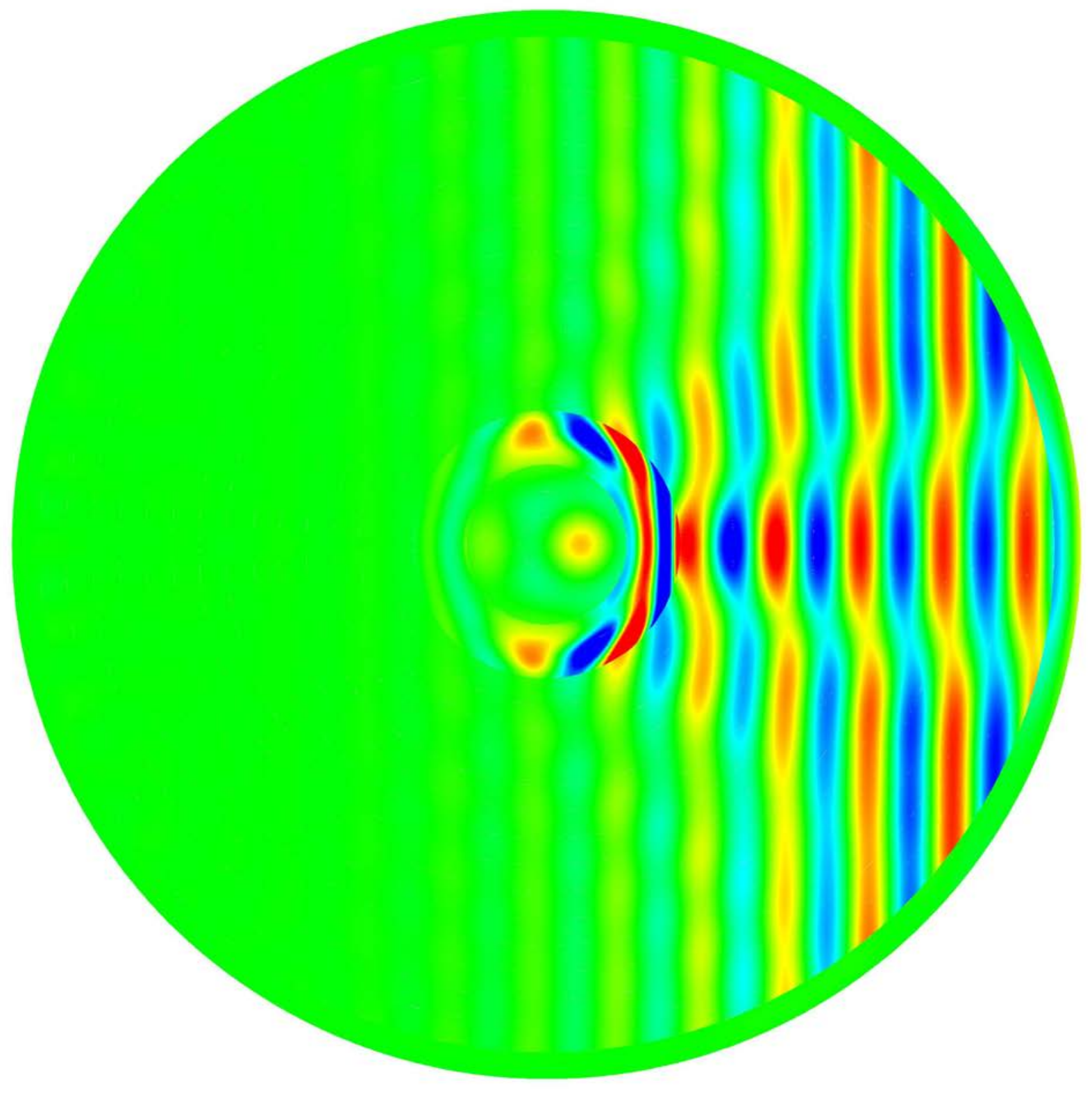}}
	\subfigure[t=5.5]{\includegraphics[scale=.18]{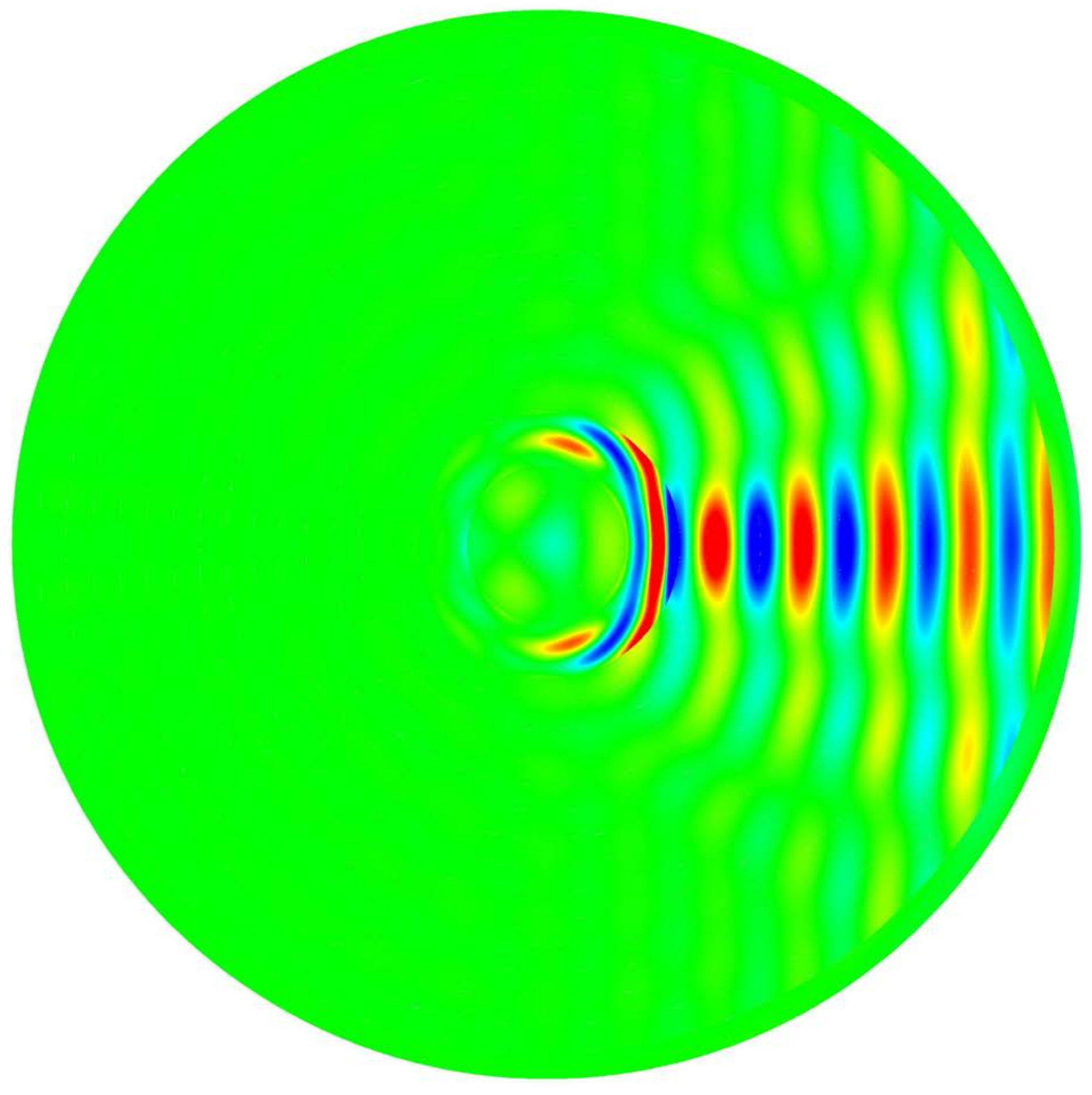}}
	\subfigure[t=6]{ \includegraphics[scale=.18]{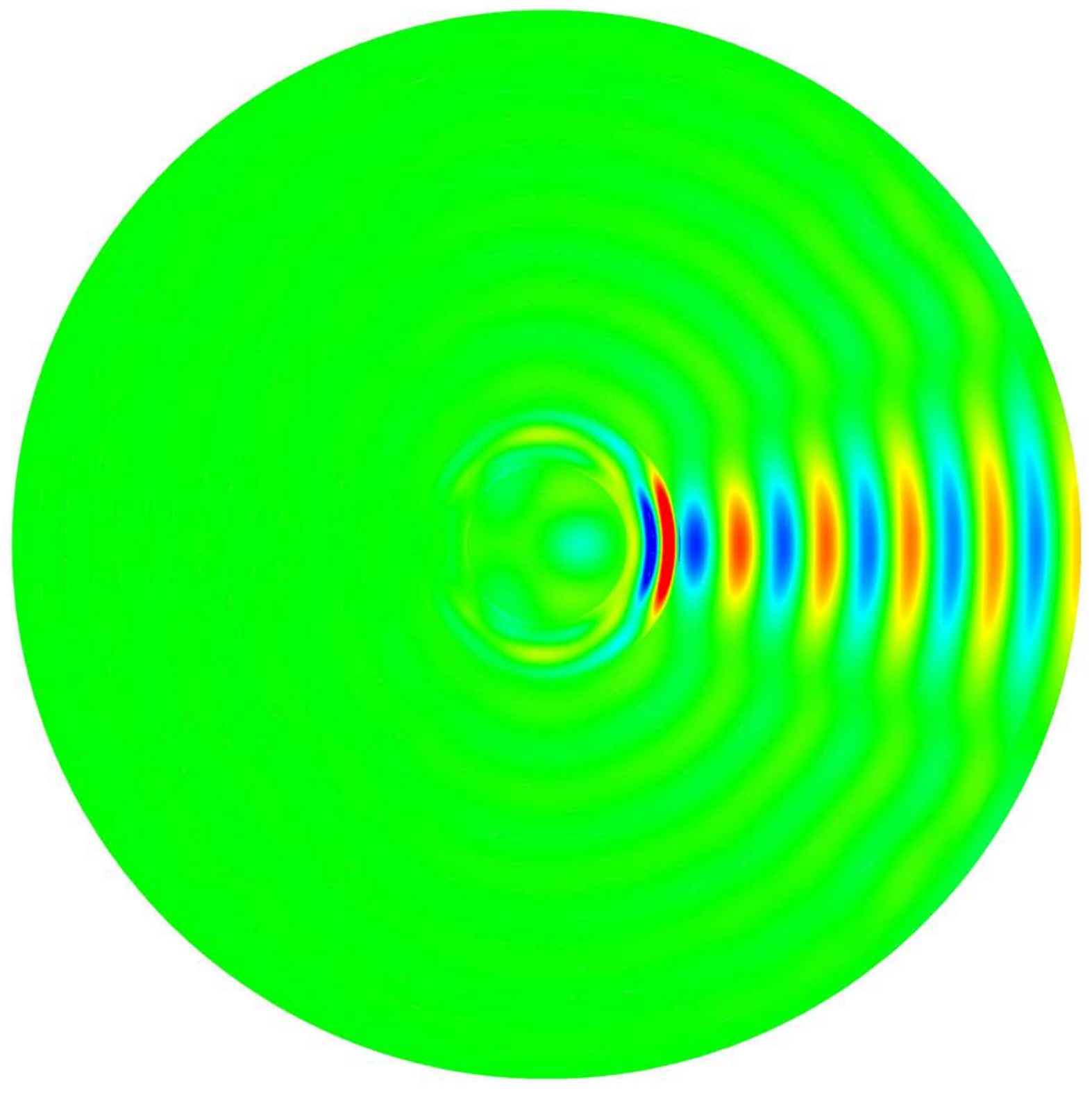}}
	\subfigure[t=6.5]{ \includegraphics[scale=.18]{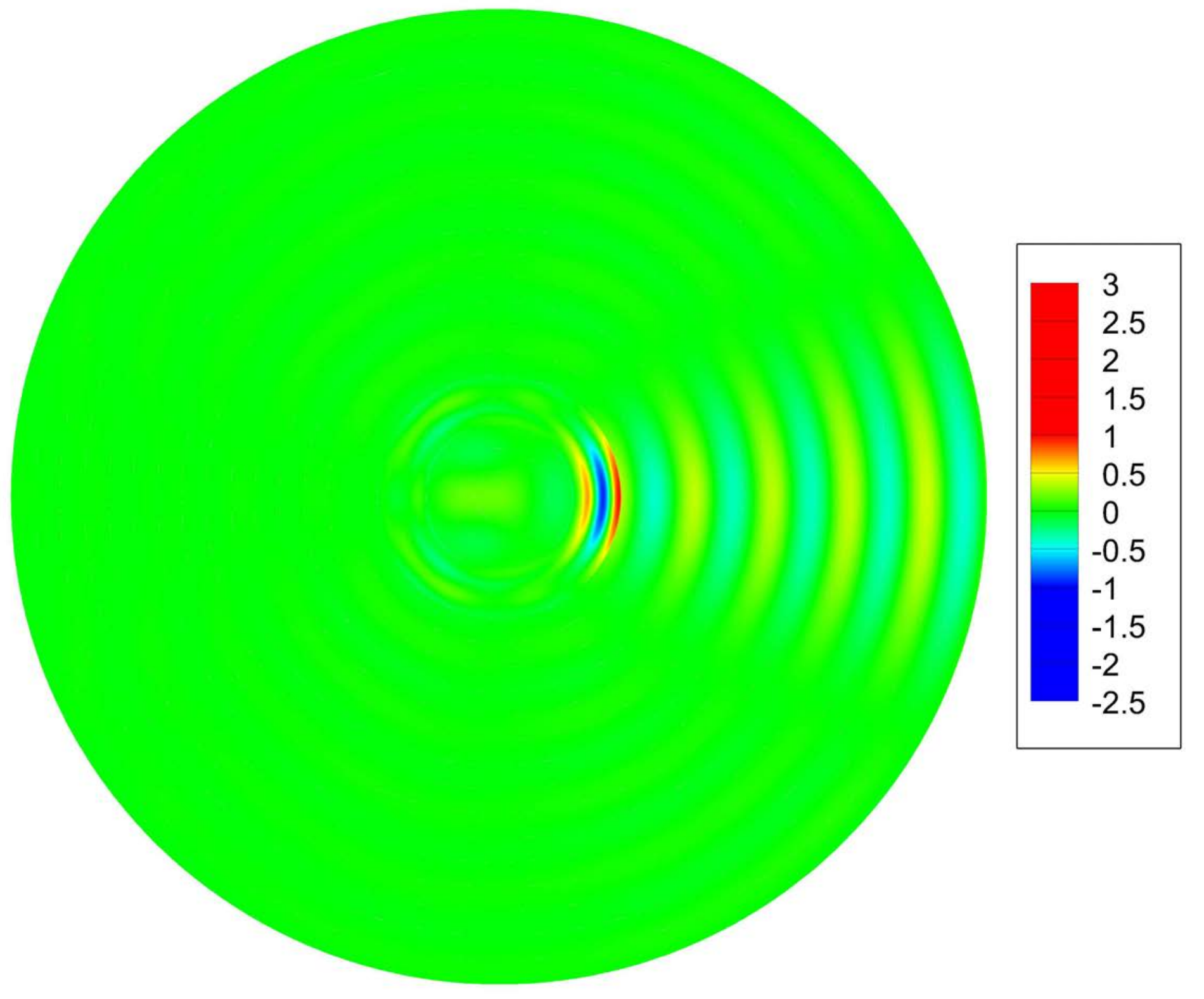}}
	\caption{\small Contours of the approximated $D_z$ in the $XY$ plane at different time steps with $R_2<2R_1$.}
	\label{planewavepulse2}
\end{figure}

\section{Conclusion}
In this paper, we  proposed accurate algorithms for computing the involved temporal convolutions of the NRBCs for the time-dependent Maxwell's equations on a spherical artificial surface.  More precisely, we provided the explicit formulas of the convolution kernel functions defined by inverse Laplace transforms of special modified Bessel functions, and also derived a new formulation of the NRBC capacity operator.  With these at our proposal, the temporal convolutions in the NRBCs can be computed in a fast manner which therefore could offer an accurate way to reduce Maxwell's system in $\mathbb R^3$ to a bounded domain.
 As a direct application of the truncated model, we considered  the modelling and accurate simulation of the time-domain invisibility cloaks. We derived a new model valid for general cloaking geometry for the design of time-domain full wave invisibility cloaks involving just one unknown field $\bs D$ and seemingly complicated convolution operators that could be evaluated recursively in time again.
 In this work, we focused on the spherical invisibility cloaks designed in the first, original of Pendry et al (cf. \cite{pendry.2006}).   We proposed an efficient VSH-spectral-element method for numerical simulation. The resulted algorithm could produce accurate numerical solution with far less computation cost compared with the simulations based on FDTD in literature.

\section*{Acknowledgments}
The research of the first author is supported by NSFC (grant 11771137), the Construct Program of the Key Discipline in Hunan Province and a Scientific Research Fund of Hunan Provincial Education Department (No. 16B154).
The research of the third author is supported by the Ministry of Education, Singapore, under its MOE AcRF Tier 2 Grants (MOE2018-T2-1-059 and MOE2017-T2-2-144).

The authors would like to thank Dr. Xiaodan Zhao at the National Heart Centre in Singapore for the initial exploration of this topic when she was a research associate in NTU.

\begin{appendix}

	\section{Vector spherical harmonics} \label{spherehar}
	\renewcommand{\theequation}{A.\arabic{equation}}
	\setcounter{equation}{0}
	We adopt the notation and setting as in N{\'e}d{\'e}lec \cite{Nedelec}.
	The spherical coordinates
	$(r,\theta,\varphi)$ are related to  the Cartesian coordinates
	${\bs r}=(x,y,z)$ via
	\begin{equation}\label{sphco}
	x=r\sin \theta \cos \varphi,\quad y=r\sin\theta \sin
	\varphi,\quad z=r\cos\theta,
	\end{equation}
	where $r\ge 0,\theta\in [0,\pi]$ and $\phi\in [0,2\pi).$ The corresponding  moving  (right-handed) orthonormal coordinate basis $\{\er,
	\et, \ep \}$ is given by
	\begin{equation}\label{localco}
	\begin{split}
	&\er=\hat{\bs r}={\bs r}/{r},\;\; \et=(\cos \theta \cos \varphi, \
	\cos\theta \sin \varphi, \ -\sin\theta),
	\;\;  \ep= (-\sin
	\varphi, \ \cos \varphi, \ 0).
	\end{split}
	\end{equation}
	Let  $\{Y_{l}^m\}$ be the spherical harmonics as normalized  in  \cite{Nedelec}, and let  $S$ be the unit sphere. Recall that
	\begin{equation}\label{nablaY}
	\nabla_S Y_l^m =\frac {\partial Y_l^m}{\partial\theta}\et+\frac 1 {\sin \theta}
	\frac {\partial Y_l^m}{\partial\varphi}\ep.
	\end{equation}

	The VSH family $\big\{ \bs Y_l^m , \bs\Psi_{l}^m,
	\bs \Phi_l^m\big\}:=\big\{ Y_l^m \er, \nabla_SY_{l}^m,
	\nabla_S Y_l^m \wedge \er\big\},$ which  has been used in the Spherepack
	\cite{Swa.S00} (also see \cite{Morse53}) forms a complete orthogonal basis of ${\bs L}^2(S):=(L^2(S))^3$  under the inner product:
	\begin{equation}\label{uvform}
	\langle \bs u, \bs v\rangle_S=\int_S \bs u\cdot \bar{\bs v}\, dS=\int_0^{2\pi}\hspace*{-3pt} \int_0^{\pi} \bs u\cdot \bar{\bs v}\, \sin\theta\, d\theta d\varphi.
	\end{equation}
	Define the subspace  of  ${\bs L}^2(S),$ consisting of the tangent components of the vector fields on $S$:
	\begin{equation}\label{tangcomp}
	{\bs L}_T^2(S)= \big\{\bs u\in {\bs L}^2(S) : \bs u\cdot \hat {\bs x}=0\big\}.
	\end{equation}
	The VSH $\{\bs\Psi_{l}^m, \vt\}$ forms a complete orthogonal basis of ${\bs L}_T^2(S).$ Consequently, the vector field expanded  in terms of VSH has a distinct separation  of tangential and normal components.
	For any vector fields $\bs u\in {\bs L}^2(S)$, we write
	\begin{equation}\label{Eexpansion}
	{\bs u}=u_{00}\bs Y_0^0+\sum_{l=1}^\infty\sum_{|m|=0}^l\big\{u_{lm}^{r}\,  \bs Y_l^m\, +u_{lm}^{(1)}\bs \Psi_{l}^m+u_{lm}^{(2)}\, \vt\big\},
	\end{equation}
	where we denote $\beta_l=l(l+1),$ and have
	\begin{equation}\label{uvwexp}
	u_{00}=\langle\bs u, \bs Y_0^0 \rangle_S,\;\;\;u_{lm}^r= \langle\bs u, \bs Y_l^m \rangle_S, \;\;\;     u_{lm}^{(1)}= {\beta_l^{-1}}\langle\bs u, \bs\Psi_{l}^m \rangle_S, \;\;\; u_{lm}^{(2)}= {\beta_l^{-1}}\langle\bs u, \vt \rangle_S.
	\end{equation}
	It is noteworthy that   given $\bs u$, we   can be computed $\{u_{lm}^r, u_{lm}^{(1)}, u_{lm}^{(2)}\}$ via the discrete VSH-transform using the Spherepack
	\cite{Swa.S00}, and vice versa by the inverse transform. Moreover, the normal component  solely involves the first term while the tangential component $\bs E_T$ of $\bs E$ involves the last two terms in \eqref{Eexpansion}.

	Now, we collect some frequently used vector calculus formulas. Define the differential operators:
	\begin{equation}\label{opsa}
	d_l^{\pm}=\frac d {dr}\pm \frac l r, \quad  \hat \partial_r=\frac d {dr}+\frac 1 r, \quad {\mathcal L}_l=\hat \partial_r^2-\frac{\beta_l}{r^2}=\frac{d^2}{dr^2}+\frac 2 r\frac{d}{dr}
	-\frac{\beta_l}{r^2},
	\end{equation}
	where $\beta_l:=l(l+1)$. For any given $f(r)$, the following properties can be derived from \cite{Hill54}:
	\begin{itemize}
		\item For divergence operator
		\begin{equation}
		{\rm div}\big(f \bs Y_{l}^m\big)=\Big(\frac d {dr}+\frac 2 r\Big) f\,Y_l^m,\quad{\rm div}\big(f \bs\Psi_{l}^m\big)=-\beta_l\frac{f} r \,\bs Y_l^m,\quad {\rm div}\big(f \bs \Phi_{l}^m\big)=0;\label{divformula}
		\end{equation}
		\item For curl operator
		\begin{equation}\label{vt2yy2}
		\nabla\times \big(f \bs Y_l^m \big)=\frac f r \,\bs\Phi_l^m,\quad
		\nabla\times \big(f \bs\Psi_{l}^m\big)=-\hat \partial_r f \,\bs\Phi_l^m, \quad
		\nabla\times \big(f \bs\Phi_l^m\big)=\hat \partial_r f \,\bs\Psi_{l}^m+\beta_l\frac f r \bs Y_l^m;
		\end{equation}
		\item For Laplace operator
		\begin{equation}
		\Delta \big(f\vt\big)={\mathcal L}_{l}(f)\vt.\label{newvect}
		\end{equation}
	\end{itemize}

	\renewcommand{\theequation}{B.\arabic{equation}}
	\setcounter{equation}{0}
	\section{Proof of Proposition \ref{newprob}}\label{proofnewprob}
	\begin{proof}
		Recall that if ${\rm div} {\bs u}=0,$ then
$		\nabla \times \nabla \times {\bs u}=-\Delta {\bs u}. $
		Thus, from \eqref{opsa}-\eqref{newvect}, we derive
		\begin{align*}
		&\nabla \times \nabla \times \big( u\vt  \big)=-\Delta \big(  u \vt   \big)=-\mathcal{L}_l(u)\vt,\\
		&\nabla \times \nabla \times \nabla \times \big( v \vt   \big)=-\nabla \times \big(  \Delta \big( v \vt   \big)  \big)=-\nabla \times \big(  \mathcal{L}_l\big(v\big)\vt   \big).
		\end{align*}
		Therefore, \eqref{HomoEq0} can be reduced to:
		\begin{equation}
		\frac{\partial^2u^i_{lm}}{\partial t^2}-c^2\mathcal{L}_l( u^i_{lm} )=f_{1,l}^{i,m}\quad \frac{\partial^2v^i_{lm}}{\partial t^2}-c^2\mathcal{L}_l( v^i_{lm} )=f_{2,l}^{i,m}, \quad r\in I_i,\quad i=0, 2, 3,
		\end{equation}
		for $|m|\leq l$, $l=1, 2, \cdots$, by using the expansions \eqref{unknownfieldsexp}.
In spherical coordinates (cf. \cite{Abr.I64}):
		\begin{equation}\label{sphericalcurl}
		\nabla \times {\bs v}= \frac 1{r\sin
			\theta}\Big( \frac{\partial \big(\sin\theta v_\varphi\big)}{\partial
			\theta} -\frac{\partial v_\theta}{\partial \varphi}\Big)\er + \frac
		1 r \Big(\frac 1 {\sin\theta} \frac{\partial v_r}{\partial \varphi}-
		\frac{\partial \big(r v_\varphi\big)}{\partial r} \Big)\et+\frac 1 r \Big(\frac{\partial \big(r v_\theta\big)}{\partial
			r}-\frac {\partial v_r}{\partial\theta} \Big)\ep,
		\end{equation}
		for any vector field $\bs v=v_r\er+v_{\theta}\et+v_{\varphi}\ep$. Apparently, we have
\(\nabla\times(u_{00}^i(r, t)\bs Y_0^0)=\bs 0,\)
		as $\bs Y_0^0=\er/\sqrt{4\pi}$. For the coefficient $u_{00}^i$, we then have
		\begin{equation}\label{u00govern}
		\frac{\partial^2u_{00}^i}{\partial t^2}=f_{00}^i,\quad r\in I_i,\quad i=0, 2, 3.
		\end{equation}
		
		We now turn to the governing equation \eqref{CloakingEq} in the cloaking layer $I_1=(R_1,R_2)$. According to \eqref{vt2yy2}, the vector spherical harmonic expansion of $\bs D^1$ can be rewritten as
		\begin{equation}\label{SphereExp}
		{\bs D}^1=u_{00}^1\bs Y_0^0+\sum_{l=1}^\infty\sum_{|m|=0}^l \Big\{   u_{lm}^1\bs\Phi_l^m+ \hat \partial_r v_{lm}^1  \bs{\Psi}_{l}^m+\frac{\beta_l}{r}v_{lm}^1 \bs Y_{l}^m     \Big\}.
		\end{equation}
		 Using \eqref{SphereExp} and the fact that $\mathscr{D}_1$ defined in \eqref{DispOperator1} is uniaxial, we have
		\begin{equation}
		\label{D1Di}
		\mathscr{D}_1[{\bs D}^1]=\big(u_{00}^1+\theta_1\ast u_{00}^1\big)\bs Y_{0}^0+\sum_{l=1}^\infty\sum_{|m|=0}^l\Big\{\epsilon^{-1}u_{lm}^1\bs\Phi_l^m+\epsilon^{-1}\hat \partial_r v_{lm}^1 \bs\Psi_{l}^m +\frac{\beta_l}{r}\big(v_{lm}^1+\theta_1\ast v_{lm}^1\big)\bs Y_{l}^m\Big\}.
		\end{equation}
		Using formula \eqref{sphericalcurl}, we have
		\begin{equation}
		\nabla\times\Big(\big(u_{00}^1+\theta_1\ast u_{00}^1\big)\bs Y_{0}^0\Big)=\bs 0.
		\end{equation}
		Then, we calculate from \eqref{D1Di} that
		\begin{equation}
		\label{curlD1D1}
		\begin{split}
		\nabla \times \big(\mathscr{D}_1[{\bs D}^1 ]   \big)=&\sum_{l=1}^\infty\sum_{|m|=0}^l\Big(\frac{\beta_l}{r^2}\big(v_{lm}^1+\theta_1\ast v_{lm}^1\big)-\epsilon^{-1} \hat \partial_r^2 v_{lm}^1\Big)\bs\Phi_l^m\\ &+\sum_{l=1}^\infty\sum_{|m|=0}^l\Big(\epsilon^{-1} \hat \partial_ru_{lm}^1 \bs\Psi_{l}^m+\epsilon^{-1}\frac{\beta_l}{r}u_{lm}^1\bs Y_{l}^m\Big)
		\end{split}
		\end{equation}
		by using formulas \eqref{vt2yy2}.
		Repeating the above calculation and using the definition of $\mathscr D_2$ and \eqref{vt2yy2}, we obtain
		\begin{equation}
		\begin{split}
		\nabla \times \big( \mathscr{D}_2&\big[\nabla \times \big(\mathscr{D}_1[{\bs D}^1] \big)\big]   \big)=\sum_{l=1}^\infty\sum_{|m|=0}^l\epsilon^{-1}\Big( \frac{\beta_l}{r^2}(u_{lm}^1+\theta_2\ast u_{lm}^1)-\epsilon^{-1}\hat \partial_r^2 u_{lm}^1 \Big )\bs\Phi_l^m\\
		&+\sum_{l=1}^\infty\sum_{|m|=0}^l\epsilon^{-1}\nabla \times \Big(\Big( \frac{\beta_l}{r^2}(v_{lm}^1+\theta_1\ast v_{lm}^1)- \epsilon^{-1}\hat \partial_r^2 v_{lm}^1      \Big)\bs\Phi_l^m               \Big),
		\end{split}
		\end{equation}
		Inserting the above equation into \eqref{CloakingEq}, one immediately shows that the expansion coefficients $\{u_{lm}^1$, $v_{lm}^1\}, |m|\leq l, l=1, 2, \cdots$ satisfy the same governing equation \eqref{vcloak} with different convolution kernels $\theta_2$ and $\theta_1$, respectively. As in \eqref{u00govern}, $u_{00}^1$ satisfies the same differential equation.

		According to \eqref{SphereExp} and \eqref{D1Di} and the facts
		\begin{equation}\label{VSHbasistimes}
		\bs{\Psi}_l^m\times\bs e_r=\bs{\Phi}_l^m,\quad \bs{\Phi}_l^m\times\bs e_r=-\bs{\Psi}_l^m,
		\end{equation}
		we have
		\begin{equation}\label{formulatimeser}
		\begin{split}
		&{\bs D}^i\times \bs e_r= \sum_{l=1}^\infty\sum_{|m|=0}^l\big(-u_{lm}^i\bs\Psi_l^m+ \hat \partial_r v_{lm}^i  \bs{\Phi}_{l}^m\big),\quad i=0, 2,3,\\
		&(\nabla\times{\bs D}^i)\times \bs e_r= \sum_{l=1}^\infty\sum_{|m|=0}^l\Big\{
		\Big(\hat \partial_r^2 v_{lm}^i- \frac{\beta_l}{r^2}v_{lm}^i\Big)\bs\Psi_l^m+\hat{\partial}_ru_{lm}^i \bs{\Phi}_{l}^m\Big\},\quad i=0, 2,3,
		\end{split}
		\end{equation}
		and
		\begin{equation*}
		\begin{split}
		&\mathscr{D}_1[{\bs D}^1]\times \bs e_r=\sum_{l=1}^\infty\sum_{|m|=0}^l\big(-\epsilon^{-1}u_{lm}^1\bs\Psi_l^m+\epsilon^{-1}\hat \partial_r v_{lm}^1 \bs\Phi_{l}^m\big),\\
		&\big(\nabla\times(\mathscr{D}_1[{\bs D}^1])\big)\times \bs e_r= \sum_{l=1}^\infty\sum_{|m|=0}^l\Big\{\Big(\epsilon^{-1} \hat \partial_r^2 v_{lm}^1- \frac{\beta_l}{r^2}\big(v_{lm}^1+\theta_1\ast v_{lm}^1\big)\Big)\bs\Psi_l^m +\epsilon^{-1} \hat \partial_ru_{lm}^1 \bs\Phi_{l}^m\Big\}.
		\end{split}
		\end{equation*}
		Substituting the above equations into jump condition \eqref{Interface} and \eqref{Interface1}, we obtain jump conditions
		\begin{equation}\label{d9}
		\begin{split}
		\epsilon u_{lm}^0=u_{lm}^1,\quad \partial_r v_{lm}^1=\epsilon \partial_r v_{lm}^0+(\epsilon-1)r^{-1}v_{lm}^0 \quad{\rm at}\;\;\; r=R_1,\\
		\epsilon u_{lm}^2=u_{lm}^1,\quad \partial_r v_{lm}^1=\epsilon \partial_r v_{lm}^2+(\epsilon-1)r^{-1}v_{lm}^2 \quad{\rm at}\;\;\; r=R_2,
		\end{split}
		\end{equation}
		and
		\begin{eqnarray}
		\partial_r u_{lm}^1=\epsilon^2 \partial_r u_{lm}^0+\epsilon(\epsilon-1)r^{-1}u_{lm}^0 \quad {\rm at}\;\;\; r=R_1, \label{d10}\\
		\partial_r u_{lm}^1=\epsilon^2 \partial_r u_{lm}^2+\epsilon(\epsilon-1)r^{-1}u_{lm}^2 \quad {\rm at}\;\;\; r=R_2,\label{d11}\\
		\hat \partial_r^2 v_{lm}^0- \frac{\beta_l}{r^2}v_{lm}^0=\epsilon^{-2} \hat \partial_r^2 v_{lm}^1- \frac{\beta_l}{\epsilon r^2}\big(v_{lm}^1+\theta_1\ast v_{lm}^1\big) \quad {\rm at}\;\;\; r=R_1, \label{vjumpcond1}\\
		\hat \partial_r^2 v_{lm}^2- \frac{\beta_l}{r^2}v_{lm}^2=\epsilon^{-2} \hat \partial_r^2 v_{lm}^1- \frac{\beta_l}{\epsilon r^2}\big(v_{lm}^1+\theta_1\ast v_{lm}^1\big) \quad {\rm at}\;\;\; r=R_2.\label{vjumpcond2}
		\end{eqnarray}
		Noting that $\hat{\partial}_r^2u=\frac{1}{r^2}\frac{\partial }{\partial r}\big(r^2\frac{\partial u}{\partial r}\big)$, the governing equations \eqref{vhomo0}-\eqref{vcloak} then gives
		\begin{equation}
		\label{jumpeq}
		\begin{split}
		\epsilon^{-2} \hat \partial_r^2 v_{lm}^1- \frac{\beta_l}{\epsilon r^2}\big(v_{lm}^1+\theta_1\ast v_{lm}^1\big)=\frac{1}{c^2}\frac{\partial^2v^1_{lm}}{\partial t^2},\quad\hat \partial_r^2 v_{lm}^i- \frac{\beta_l}{r^2}v_{lm}^i=\frac{1}{c^2}\frac{\partial^2v^i_{lm}}{\partial t^2},
		\end{split}
		\end{equation}
	for $i=0, 2, 3.$	Substituting \eqref{jumpeq} into the jump conditions \eqref{vjumpcond1}-\eqref{vjumpcond2} and integrate w.r.t. $t$ and using homogeneous initial conditions \eqref{initialv}, we derive
		\begin{equation}\label{continuityvlm}
		v^0_{lm}=v^1_{lm}\quad {\rm at}\;\;\; r=R_1; \quad v^2_{lm}=v^1_{lm}\quad {\rm at}\;\;\; r=R_2.
		\end{equation}
		Note that the jump conditions at artificial interface $r=R_3$ are trivial. Thus,
		we consider the boundary condition at $ r=b$.

		Applying expansion \eqref{unknownfieldsexp} in \eqref{DtN} and using identities \eqref{VSHbasistimes}, \eqref{formulatimeser} and formulation \eqref{newEtMdivfree}, we obtain
		\begin{equation}
		\begin{split}
		\sum_{l=1}^\infty\sum_{|m|=0}^l
		\!\Big(\partial_t\hat \partial_r v_{lm}^3+c\Big(\hat \partial_r^2 v_{lm}^3- \frac{\beta_l}{b^2}v_{lm}^3\Big)-\frac{c}{b^2}\omega_l\ast v_{lm}^{3}\Big)\bs\Psi_l^m&\\
		+\sum_{l=1}^\infty\sum_{|m|=0}^l\!\Big(\partial_tu_{lm}^3+c\hat{\partial}_ru_{lm}^3 -\frac c b\sigma_l\ast u_{lm}^{3}\Big)\bs{\Phi}_{l}^m&=\bs 0,
		\end{split}
		\end{equation}
		which implies two boundary conditions
		\begin{align}
		\frac{1}{c}\partial_tu_{lm}^3+\frac{{\partial}u_{lm}^3}{\partial r}+\frac{1}{b}u_{lm}^3 -\frac 1 b\sigma_l\ast u_{lm}^{3}=0\quad {\rm at}\;\; r=b,\label{unrbc}\\
		\frac{\partial}{\partial r}\frac{\partial v_{lm}^3}{\partial t}+\frac{1}{b}\frac{\partial v_{lm}^3}{\partial t}+c\Big(\hat \partial_r^2 v_{lm}^3- \frac{\beta_l}{b^2}v_{lm}^3\Big)-\frac{c}{b^2}\omega_l\ast v_{lm}^{3}=0\quad {\rm at}\;\; r=b.\label{vnrbc}
		\end{align}
		Here, the definition of differential operator $\hat{\partial}$ in \eqref{opsa} is applied. Obviously, the boundary condition for $u_{lm}^3$ is exactly the one we adopted in the model problem \eqref{veqsys}. Next, we will show that the equation \eqref{vnrbc} can be reformulated to the same form as \eqref{unrbc}. Indeed, we can directly calculate
		\begin{equation}
		\frac{c}{b^2}\omega_l(t)\ast v_{lm}^3(b, t)=\frac{1}{b}\bigg(\int_0^t\sigma_l'(t-\tau)v_{lm}^3(b, \tau)\,d\tau+\sigma_l(0)v_{lm}^3(b, t)\bigg)=\frac{1}{b}\partial_t(\sigma_l\ast v_{lm}^3(b,t)),
		\end{equation}
		by using the expression of $\omega_l(t)$ \eqref{kernelomega00}. Using the above equation and \eqref{jumpeq} in \eqref{vnrbc} gives
		\begin{equation}
		\frac{\partial}{\partial t}\Big\{\frac{ \partial v_{lm}^3}{\partial r}+\frac{1}{b}v_{lm}^3+\frac{1}{c}\frac{\partial v_{lm}^3}{\partial t}-\frac{1}{b}\sigma_l\ast v_{lm}^{3}\Big\}=0\quad {\rm at}\;\; r=b.
		\end{equation}
		Consequently, we obtain boundary condition \eqref{DtNv} by the zero initial data assumption.

		Note that the initial boundary value problems for coefficients $u_{lm}^i$ and $v_{lm}^i$ have almost the same form except the interface conditions \eqref{d9}-\eqref{d11} and \eqref{continuityvlm}. Apparently, by introducing the variable substitution \eqref{varisub}, $\{\widetilde u_{lm}^i\}$ satisfy the same governing equation as $\{u_{lm}^i$ and the same interface and boundary conditions as $v_{lm}^i$.
\end{proof}
	
\end{appendix}



\end{document}